\date{}
\def\im{\qopname\relax o{Im}}
\def\saf{\qopname\relax o{SAF}}
\def\re{\qopname\relax o{Re}}
\def\rank{\qopname\relax o{rank}}
\long\def\remove#1{{\color{red}#1}}
\long\def\newtext#1{{\color{blue}#1}}
\long\def\remove#1{}
\long\def\newtext#1{#1}
\theoremstyle{theorem}
\newtheorem{theo}{Theorem}
\newtheorem{lemm}{Lemma}
\newtheorem{prop}{Proposition}
\newtheorem{conj}{Conjecture}
\newtheorem{coro}{Corollary}
\theoremstyle{remark}
\newtheorem{rema}{Remark}
\newtheorem{exam}{Example}
\theoremstyle{definition}
\newtheorem{defi}{Definition}
\author {Ivan Dynnikov and Alexandra Skripchenko}
\thanks{The work is supported by the Russian Science Foundation under grant 14-50-00005 and performed in Steklov Mathematical Institute of Russian Academy of Sciences, Moscow, Russia.}
\address{\noindent V.\,A.\,Steklov Mathematical Institute of Russian Academy of Science, 8 Gubkina Str., Moscow 119991, Russia}
\email{dynnikov@mech.math.msu.su}
\email{sashaskrip@gmail.com}
\title[{}]{Minimality of interval exchange transformations with restrictions}
\begin{document}
\maketitle

\begin{abstract}
It is known since 40 years old paper by M.\,Keane that minimality is a generic (i.e.\ holding with probability one) property
of an irreducible interval exchange transformation. If one puts some integral linear restrictions on
the parameters of the interval exchange transformation, then minimality may become
an ``exotic'' property. We conjecture in this paper that this occurs
if and only if the linear restrictions contain a Lagrangian subspace
of the first homology of the suspension surface. We partially prove it
in the `only if' direction and provide a series of examples to support
the converse one. We show that the unique ergodicity
remains a generic property if the restrictions on the parameters do not contain
a Lagrangian subspace (this result is due to Barak Weiss).
\end{abstract}

\section{Introduction}
Interval exchange transformations are maps from an interval to itself that look like suggested by
the name: the interval is cut into several subintervals that are ``glued back'' in a different way.
On each subinterval the map has the form of a translation $x\mapsto x+\mathrm{const}$.

The first examples of interval exchange
transformations were studied by V.\,Arnold (see, e.g.\ \cite{a63}) and A.\,Katok and A.\,Stepin \cite{ks67} in early 1960s.
The general notion was introduced by V.\,Oseledets in \cite{o68}.

\begin{defi}
An interval exchange transformation is specified by a natural number $n$, a permutation $\pi\in S_n$,
and a probability vector $\mathbf a=(a_{1}, a_{2},\cdots,a_{n})$, $\sum_ia_i=1$, of lengths of the subintervals. The transformation
$T_{\pi,\mathbf a}$ determined by this data is the map $[0,1)\rightarrow[0,1)$ defined by the formula:
$$T_{\pi,\mathbf a}(x)=x-x_i+\widetilde x_{\pi^{-1}(i)}\quad\text{if}\quad x\in[x_{i-1},x_i),$$
where
$$x_0=\widetilde x_0=0,\quad x_i=\sum_{j=1}^ia_j,\quad\widetilde x_i=\sum_{j=1}^ia_{\pi(j)},\quad i=1,\ldots,n.$$
\end{defi}

We will only be interested in interval exchange transformations $T_{\pi,\mathbf a}$ with
an \emph{irreducible} permutation $\pi\in S_n$, i.e.\ such that no subset of the form $\{1,2,\ldots,k\}$ with $1\leqslant k<n$ is invariant
under $\pi$.

Any interval exchange transformation is invertible with the inverse given by $T_{\pi,\mathbf a}^{-1}=T_{\pi^{-1},\mathbf a\cdot\pi}$,
where $\mathbf a\cdot\pi=(a_{\pi(1)},\ldots,a_{\pi(n)})$.

\begin{defi}
Let $T$ be an interval exchange transformation. \emph{The $T$-orbit} of a point $x\in[0,1)$ is the subset
$\{T^k(x)\;;\;k\in\mathbb Z\}$.
\end{defi}

Interval exchange transformations appear naturally as the first return
map on a transversal for singular foliations defined by a closed $1$-form on a surface.
More precisely, for each interval exchange transformation
one can construct a translation surface
such that the transformation
will be the first return map of the vertical foliation on some horizontal
interval (see~\cite{m06,vi06,z06} for details); and vice versa, for each foliation on an oriented surface
that is defined by a closed one-form the first return map on any transversal
is an interval exchange transformation.

\begin{defi}
An interval exchange transformation $T$ is called \emph{minimal} if all $T$-orbits are everywhere dense in $[0,1)$.
\end{defi}

The question of finding conditions under which an interval exchange transformation
is minimal was posed by M.\,Keane in \cite{k75}.
He shows that all interval exchange transformations with irreducible permutation and
rationally independent lengths $a_i$ satisfy the following condition, which, in turn, implies that
the transformation is minimal.

\begin{defi}
We say that an interval exchange transformation $T_{\pi,\mathbf a}$ \emph{satisfies Keane's condition} if
the $T_{\pi,\mathbf a}$-orbits of the points
$x_1=a_1$, $x_2=a_1+a_2$, \ldots, $x_{n-1}=\sum_{i=1}^{n-1}a_i$ are
pairwise disjoint and infinite.
\end{defi}

Moreover, Keane posed a conjecture in \cite{k75}, which was later proved by H.\,Masur~\cite{m82} and W.\,Veech~\cite{v82},
that almost all irreducible interval exchange transformations are uniquely ergodic.

There are, however, several ways how families of irreducible non-generic interval exchange transformations whose parameters
are dependent over $\mathbb Q$ may arise, in which case the results mentioned above
do not apply. In such a family, almost all interval exchange transformations may still
be minimal (and even uniquely ergodic), but it may also happen that all or almost all are not. We illustrate this with two simple examples.

\begin{exam}\label{examplenovak}
Let $n=2k$ and
\begin{equation}\label{permnov}
\pi=\begin{pmatrix}1&2&3&4&\ldots&2k-3&2k-2&2k-1&2k\\
4&3&6&5&\ldots&2k&2k-1&2&1\end{pmatrix}\in S_{n},
\end{equation}
$\mathbf a=(a_1,a_2,a_1,a_2,\ldots,a_1,a_2)\in\mathbb R^{n}$.
If $a_1$ and $a_2$ are incommensurable, then $T_{\pi,\mathbf a}$ is minimal and
uniquely ergodic.
In the particular case $k=2$ such interval exchange transformations appear in \cite{n10}.
\end{exam}

\begin{exam}
Let $\pi=\begin{pmatrix}1&2&3&4\\2&4&3&1\end{pmatrix}$ and $a_1=a_4$. Then for any
$x\in[x_2,x_3)$ we have $T_{\pi,\mathbf a}(x)=x$, so, the transformation is \emph{not} minimal
\newtext{unless $a_3=0$, i.e.\ $x_2=x_3$}.
\end{exam}

In general, the situation can be more complicated. Let $\mathscr U$ be a subspace of $\mathbb R^n$
defined by a homogeneous system of linear equations with integral coefficients, and $\pi\in S_n$ a fixed
permutation. We denote by $\Delta^{n-1}$ the simplex $\{\mathbf a\in\mathbb R^n\;;\;a_i\geqslant0,\ \sum_ia_i=1\}$.
The subset  
$$M(\pi,\mathscr U)=\{\mathbf a\in\Delta^{n-1}\cap\mathscr U\;;\;T_{\pi,\mathbf a}\text{ is minimal}\}$$
can be a nontrivial
part of $\Delta^{n-1}\cap\mathscr U$ as the following two examples show.

\begin{exam}\label{exam-minimal}
Let $\pi$ be as in the previous example and $\mathbf a$ satisfy the following restriction: $3a_1=a_2+a_4$. Then:
\begin{enumerate}
\item
if $a_2>a_1$ then for any $x\in[\max(0,a_1-a_4),\min(a_1,a_2-a_1))$ we have $T_{\pi,\mathbf a}^4(x)=x$ (this is a
straightforward check), and $T_{\pi,\mathbf a}$ is not minimal;
\item
if $a_2<a_1$ and $a_1,a_2,a_3$ are linearly independent over $\mathbb Q$, then $T_{\pi,\mathbf a}$ is minimal (this will
be shown in Subsection~\ref{asymptotic-n-separating}, Example~\ref{exam-seapr}),
\end{enumerate}
see Fig.~\ref{gasket1} on the left.
\end{exam}

\begin{exam}\label{rauzy}
Let $\pi=\begin{pmatrix}1&2&3&4&5&6&7\\3&6&5&2&7&4&1\end{pmatrix}$ and
$\mathbf a=(a_1,a_2,a_3,a_3,a_1,a_1,a_2)$, $3a_1+2a_2+2a_3=1$. Then the set of points $(a_1,a_2)$
for which the interval exchange transformation $T_{\pi,\mathbf a}$ is minimal forms a fractal
set shown in Fig.~\ref{gasket1} on the right.
\begin{figure}[ht]
\centerline{\includegraphics{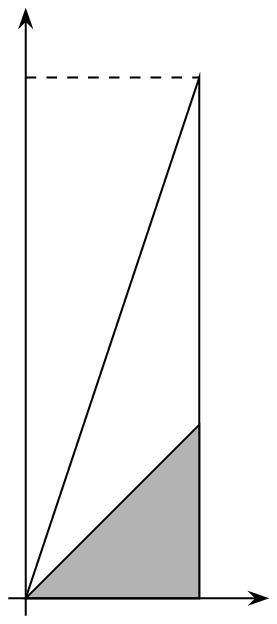}\put(-90,0){$0$}\put(-18,0){$a_1$}\put(-90,158){$\frac34$}\put(-33,0){$\frac14$}%
\put(-95,175){$a_2$}\put(-63,17){$\scriptstyle M(\pi,\mathscr U)$}\put(-25,60){$\Delta^3\cap\mathscr U$}\hskip2cm
\includegraphics{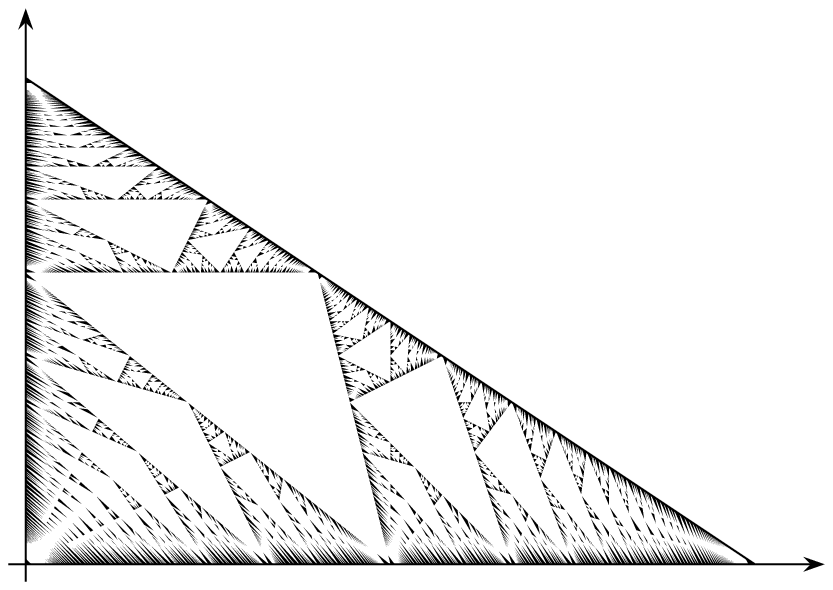}\put(-18,0){$a_2$}\put(-255,165){$a_1$}\put(-250,0){$0$}\put(-33,0){$\frac12$}\put(-250,148){$\frac13$}\put(-155,100){$\Delta^6\cap\mathscr U$}}
\caption{The subset $M(\pi,\mathscr U)\subset\Delta^n\cap\mathscr U$ in Examples 3 (left) and 4 (right).
Only points with $\dim_{\mathbb Q}\langle a_1,a_2,1\rangle=3$ are considered.}\label{gasket1}
\end{figure}
Up to a projective transformation this set is the so called
Rauzy gasket~\cite{l93,dd08,as11}. As shown in \cite{ahs13} it has Hausdorff dimension between $1$ and $2$.

If, in this example, we take the parameter vector $\mathbf a$ of the form $\mathbf a=(a_1,a_2,a_3,a_3-e,a_1,a_1-e,a_2+e)$ with $e\ne0$,
$-a_2<e<\min(a_1,a_3)$,
then the transformation $T_{\pi,\mathbf a}$ will never be minimal. This will be discussed in more detail in Subsection~\ref{spi}.
\end{exam}

These two examples demonstrate two possible behaviors of the set minimal interval exchange transformation
with integral linear restrictions on parameters. We refer to those behaviors as \emph{stable} (as in Example 3)
and \emph{unstable} (as in Example 4).

We introduce below an easily checkable condition for a set of integral
linear restrictions, and conjecture that the condition is exactly what is responsible for instability
of minimal interval exchange transformations with integral linear restrictions on the parameters.
The set of restrictions satisfying the condition
is termed \emph{rich} and otherwise \emph{poor}. We show that minimal and uniquely ergodic
interval exchange transformations that are subject to a poor set of
restrictions are always stable.

We also demonstrate several natural sources of families of interval exchange transformations
with integral linear restrictions on parameters. These sources include:
\begin{enumerate}
\item
\newtext{rank two interval exchange transformations};
\item
\remove{measured singular foliations on surfaces (orientable or not)}
\newtext{singular measured foliations on Riemann surfaces defined
by a quadratic differential};
\item
interval exchange transformations with flips;
\item
\newtext{measured foliations on non-orientable surfaces};
\item
Novikov's problem on plane sections of triply periodic surfaces;
\item
systems of partial isometries;
\item
interval translation mappings.
\end{enumerate}

In several cases, the set of restrictions appears to be rich. In most such cases, our conjecture
about instability of minimal interval exchange transformations
translates into a statement that is either known or has been conjectured to be true.

M.\,Boshernitzan in \cite{b88} considered interval exchange transformations $T_{\pi,\mathbf a}$ with
$\dim_{\mathbb Q}\langle a_1,\ldots,a_n\rangle=2$ \newtext{(rank two interval exchange transormations)}
and proposed a way to test such transformations for minimality.
\remove{However, his test applies to individual maps not saying much about
the whole family of minimal transformations within a two-parameter family defined
by integral linear restrictions.}
\newtext{A family of rank two interval exchange transformations having the same discrete pattern,
in the terminology of~\cite{b88}, is an instance of a family of transformations satisfying
a fixed set of integral linear restrictions, in our terminology. Theorem~9.1 of~\cite{b88}
states that minimality is a stable property in such a family. We show in Subsection~\ref{ranktwo}
that a rank two interval exchange transformation can be minimal only
if its set of restrictions is poor.}
\remove{ A similar test for minimality was suggested by E.\,Lanneau and C.\,Boissy
for so called linear involutions \cite{bl09}. }

The paper is organized as follows. In Section~\ref{criteriasec} we introduce all basic notions and define
poor and rich restriction spaces as well as the notion of stability for minimal interval exchange transformations
with restrictions.

In Section~\ref{poor->stable} we consider interval exchange transformations with poor restrictions.
We show that, in this case, minimal uniquely ergodic ones are always stably minimal.
We also show that in a small neighborhood of such a transformation almost
all transformations satisfying the same family of restrictions are uniquely ergodic.

In Section~\ref{instability} we conjecture that minimal interval exchange transformations
with rich restrictions are never stably minimal and, moreover, that minimality
occurs with probability $0$ in this case.

In Section~\ref{Examples} we discuss how interval exchange transformations with
rich or poor restrictions arise in different subjects. We show that some known problems are
particular instances of our conjectures.

\subsection*{Acknowledgement}We are indebted to Barak Weiss who communicated to us a proof of Theorem~\ref{uniquelyergodictheo}.
We also thankful to our anonymous referees for many valuable remarks and suggestions. In particular, the first referee drew our attention
to the connection between our subject and the SAF invariant, and the second one pointed out
a misinterpretation of Boshernitzan's result~\cite{b88} in an earlier version of the paper.

\section{Preliminaries}\label{criteriasec}

Throughout Sections~\ref{criteriasec}--\ref{instability} we assume that $\pi$ is a fixed irreducible permutation $\pi\in S_n$.

\subsection{Suspension surface}\label{prelimsec}
Interval exchange transformations (see \newtext{the} Introduction for the definition)
are intimately related with singular foliations that can be
defined by a closed $1$-form on an oriented surface. The leaves of such a foliation can also
be considered as trajectories of a Hamiltonian system on the surface. The
corresponding interval exchange transformation appears as the first return map
for some transversal of the foliation (or flow). The surface and the $1$-form are constructed from
an interval exchange transformation as follows.

\begin{defi}
Let $\mathbf a\in\Delta^{n-1}$. Take the square $[0,1]\times[0,1]$ and make the following identifications:
\begin{equation}\label{identifications}\begin{aligned}
(x,1)&\sim(T_{\pi,\mathbf a}(x),0),\ &x\in[0,1);\\
(x,1)&\sim(\lim_{y\rightarrow x-0}T_{\pi,\mathbf a}(y),0),\ &x\in(0,1];\\
(0,y)&\sim(0,0),\ &y\in[0,1];\\
(1,y)&\sim(1,0),\ &y\in[0,1].
\end{aligned}
\end{equation}
The obtained surface will be denoted $\Sigma_{\pi,\mathbf a}$ and called \emph{the suspension surface of $T_{\pi,\mathbf a}$}.
A smooth structure can be chosen on $\Sigma_{\pi,\mathbf a}$ so
that $dx$ becomes a smooth differential $1$-form on $\Sigma_{\pi,\mathbf a}$, which we denote by $\omega_{\pi,\mathbf a}$ or simply $\omega$ if $\mathbf
a$ and $\pi$ are fixed.
\end{defi}

\begin{rema}
From topological point of view, this construction is equivalent to the one
in~\cite[page 174]{m82} or~\cite[Section~12]{vi06}, where the surface comes with more structure, namely, a specific Riemann surface structure
and a holomorphic $1$-form of which $\omega$ is the real part.
\end{rema}

The fibration of $[0,1]\times[0,1]$ by vertical arcs $\{x\}\times[0,1]$ becomes,
after the identifications, a singular foliation on $\Sigma_{\pi,\mathbf a}$, which we
denote by $\mathcal F_{\pi,\mathbf a}$. Its leaves are locally defined
by the equation $\omega=0$. The foliation $\mathcal F_{\pi,\mathbf a}$
has finitely many singularities, which may occur only at the points of the subset
$$S_{\pi,\mathbf a}=\{(x_1,1),\ldots,(x_{n-1},1)\}/{\sim}\,\subset\Sigma_{\pi,\mathbf a}.$$
All singularities have the form
of a saddle or a multiple saddle, see Fig.~\ref{saddles}.
\begin{figure}[ht]
\centerline{\includegraphics[scale=0.7]{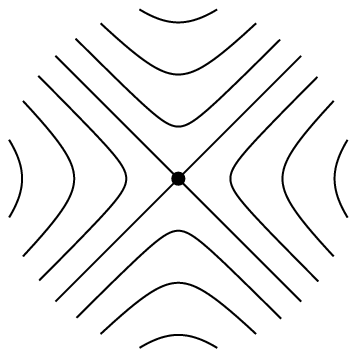}\hskip1cm\includegraphics[scale=0.7]{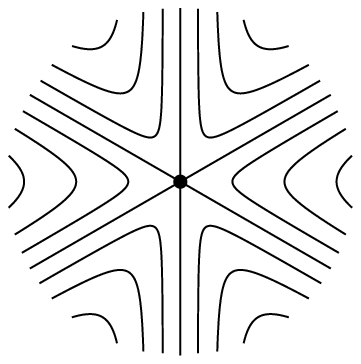}\hskip1cm\includegraphics[scale=0.7]{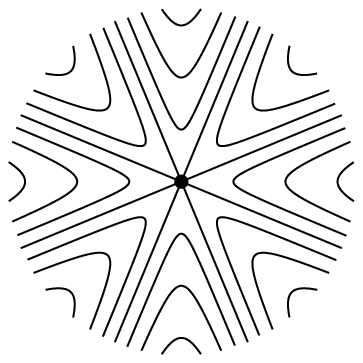}}
\caption{Singularities of $\mathcal F_{\pi,\mathbf a}$}\label{saddles}
\end{figure}

The points of $S_{\pi,\mathbf a}$ that are regular for $\mathcal F_{\pi,\mathbf a}$ are called \emph{marked points}.

One can see that, under an appropriate
orientation of leaves, the first return map of the foliation $\mathcal F_{\pi,\mathbf a}$ on the parametrized transversal $\gamma(t)=(t,1/2)/{\sim}\in
\Sigma_{\pi,\mathbf a}$
coincides with $T_{\pi,\mathbf a}$ except at finitely many points of discontinuity of $T_{\pi,\mathbf a}$, where the first return map is not well defined.
So, minimality of $T_{\pi,\mathbf a}$ is equivalent to that of $\mathcal F_{\pi,\mathbf a}$, where minimality is understood in a weak sense:
$\mathcal F_{\pi,\mathbf a}$ is minimal if every regular leaf is everywhere dense.

Keane's condition for $T_{\pi,\mathbf a}$ means exactly that
every leaf of $\mathcal F_{\pi,\mathbf a}$ is simply connected and contains at most one
marked point or singularity of $\mathcal F_{\pi,\mathbf a}$. In particular, there are no saddle connections.

Note that we consider a singular point
of $\mathcal F_{\pi,\mathbf a}$ and separatrices coming from it
as parts of a single singular leaf of $\mathcal F_{\pi,\mathbf a}$.

Let $\Sigma$ be a closed oriented surface of genus $g\geqslant1$, $S$ a finite subset of $\Sigma$,
and $m:S\rightarrow\mathbb N\cup\{0\}$ a function such that $\sum_{P\in S}m(P)=2g-2$.
The construction above gives a local parametrization of the moduli space $\mathcal M(\Sigma,S,m)$
of closed $1$-forms $\omega$ on a surface $\Sigma$ with
the following properties:
\begin{enumerate}
\item
$\omega$ is the real part of a holomorphic $1$-form on $\Sigma$
for some complex structure;
\item
$\omega$ has no zeros outside $S$;
\item
every point $P\in S$ has a neighborhood in which $\omega$ can be written as $\mathrm{Re}\bigl(d(x+\mathbbm iy)^{m(P)+1}\bigr)$
for some local coordinates $x,y$.
\end{enumerate}
Two forms $\omega$ and $\omega'$ are considered equivalent if there is a diffeomorphism $\varphi:(\Sigma,S)\rightarrow
(\Sigma,S)$ isotopic to the identity modulo $S$ such that $\omega=\mathrm{const}\cdot\varphi^*\omega'$ with $\mathrm{const}>0$.

In order to get a local parametrization of $\mathcal M(\Sigma,S,m)$ around a point represented by a $1$-form $\omega$
one needs to find a simple transversal arc (or loop) $\gamma$ of the induced foliation with endpoints in $S$ such that~$\gamma$
intersects all leaves and all saddle connections if any. One parametrizes $\gamma$ so that
$dt=\mathrm{const}\cdot\omega|_\gamma$ with $\mathrm{const}>0$, where $t$ is the parameter.
Then the first return map of the induced foliation on $\gamma$ (extended to the ambiguity points to
be continuous on the right) is an interval exchange map $T_{\rho,\mathbf a}$
such that $(\Sigma_{\rho,\mathbf a},S_{\rho,\mathbf a},\omega_{\rho,\mathbf a})$ can be identified with $(\Sigma,S,\omega)$.
A perturbation of $\omega$ such that $\gamma$ remains a transversal leaves the permutation $\rho$
unchanged, so, the components of $\mathbf a$ become local coordinates in $\mathcal M(\Sigma,S,m)$
around the point represented by $\omega$. These parameters are also interpreted as
coordinates of the relative cohomology class of $\omega$ in $H^1(\Sigma,S;\mathbb R)$.

For more details about suspension surfaces of interval exchange transformations
we refer the reader to~\cite{vi06}.

\subsection{Restrictions}\label{restrictionsec}

\begin{defi}
\emph{An integral linear restriction} (or simply \emph{a restriction}) is a linear function $r\in(\mathbb R^n)^*$
with integral coefficients. An interval exchange transformation $T_{\pi,\mathbf a}$
is said to \emph{satisfy a restriction $r$} if $r(\mathbf a)=0$.
A real subspace $\mathscr R\subset(\mathbb R^n)^*$
spanned
by a family of integral restrictions satisfied by $T_{\pi,\mathbf a}$ is called \emph{a restriction space of $T_{\pi,\mathbf a}$}.
The subspace generated by all such integral restrictions
is called \emph{the full restriction space of~$T_{\pi,\mathbf a}$} and denoted by $\mathscr R(T_{\pi,\mathbf a})$.
\end{defi}

The reader is alerted that the normalization condition $\sum_ia_i=1$ in the definition
of an interval exchange transformation is introduced only to eliminate the rescaling degree of freedom,
which is redundant,
and has nothing to do with integral linear restrictions.

We will use the interpretation of integral linear restrictions
in terms of the suspension surface $\Sigma_{\pi,\mathbf a}$ and the $1$-form
$\omega_{\pi,\mathbf a}$. Denote by $\gamma_1,\ldots,\gamma_n$ the images of the oriented intervals
$[0,x_1]$, $[x_1,x_2]$, \ldots, $[x_{n-1},1]$, respectively, in $\Sigma_{\pi,\mathbf a}$
under identifications~\eqref{identifications}.
They are regarded as $1$-chains, and their homology classes $h_i=[\gamma_i]$
form a basis in the relative homology group $H_1(\Sigma_{\pi,\mathbf a},S_{\pi,\mathbf a};\mathbb Z)$, since
$\gamma_1$, \ldots, $\gamma_n$ form the $1$-skeleton of a cell decomposition of $\Sigma_{\pi,\mathbf a}$
with $0$-skeleton $S_{\pi,\mathbf a}$. We obviously have
$$a_i=\int_{h_i}\omega_{\pi,\mathbf a}.$$
Thus all possible restrictions for $T_{\pi,\mathbf a}$ can be regarded as elements of the relative integral homology
group $H_1(\Sigma_{\pi,\mathbf a},S_{\pi,\mathbf a};\mathbb Z)$. Namely, the restriction corresponding
to a homology class $r\in H_1(\Sigma_{\pi,\mathbf a},S_{\pi,\mathbf a};\mathbb Z)$ has the form
$$\int_r\omega_{\pi,\mathbf a}=0.$$

The set of all restrictions satisfied by $T_{\pi,\mathbf a}$ is then a subgroup of $H_1(\Sigma_{\pi,\mathbf a},S_{\pi,\mathbf a};\mathbb Z)$,
and the full restriction space $\mathscr R(T_{\pi,\mathbf a})$ is a subspace of
$H_1(\Sigma_{\pi,\mathbf a},S_{\pi,\mathbf a};\mathbb R)=H_1(\Sigma_{\pi,\mathbf a},S_{\pi,\mathbf a};\mathbb Z)\otimes\mathbb R$.

\begin{defi}\label{realized}
A restriction $r\in\mathscr R(T_{\pi,\mathbf a})$ will be called \emph{an arc restriction} if it can be presented
by a single arc with distinct endpoints (which must be from $S_{\pi,\mathbf a}$). A restriction $r\in\mathscr R(T_{\pi,\mathbf a})$ will be called
\emph{a loop restriction} if it can be presented by
a simple closed curve.
If $r\in\mathscr R(T_{\pi,\mathbf a})$ is a loop restriction
or an arc restriction we say that $r$ is \emph{a simple restriction}.

We say that a simple restriction $r$ is \emph{realized by $T_{\pi,\mathbf a}$} if it can
be presented by a simple piecewise smooth arc or a simple closed piecewise smooth curve contained in a leaf (typically, singular leaf)
of $\mathcal F_{\pi,\mathbf a}$. In this case, the corresponding arc or closed curve will
be called \emph{a realization of $r$}. The number of intersections of this curve or arc
with the transversal $(0,1)\times\{1/2\}/{\sim}$ will be called \emph{the length} of the
realization. A realization $\rho$ of $r$ is \emph{elementary} if there is no decompostion $r=r_1+r_2$, $\rho=\rho_1\cup\rho_2$
with $r_1$ and $r_2$ simple restrictions realized by $\rho_1$ and $\rho_2$, respecitvely.
\end{defi}

Realizing a simple restriction is a stable property in the following sense.

\begin{prop}\label{zorich-general}
Suppose $T_{\pi,\mathbf a}$ realizes a simple restriction $r$. Then there exists an open neighborhood
$U\subset\Delta^{n-1}$ of $\mathbf a$ such that $T_{\pi,\mathbf a'}$ also realizes $r$ whenever $\mathbf a'\in U$ and
$\mathscr R(T_{\pi,\mathbf a'})\supset\mathscr R(T_{\pi,\mathbf a})$.
\end{prop}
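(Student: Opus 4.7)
The plan is to reduce the statement to the observation that, once one fixes the \emph{combinatorial type} of the realization, its survival under a perturbation of $\mathbf a$ is governed by finitely many open conditions together with the single linear equation $r(\mathbf a')=0$.

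First I would unfold a realization $\rho$ of $r$ into the fundamental rectangle $[0,1]\times[0,1]$. Since $\omega_{\pi,\mathbf a}=dx$, the leaves of $\mathcal F_{\pi,\mathbf a}$ are vertical in the rectangle, so $\rho$ becomes a finite concatenation of vertical segments $\sigma_1,\ldots,\sigma_N$, where the top endpoint of each $\sigma_k$ is identified with the bottom endpoint of $\sigma_{k+1}$ by~\eqref{identifications}. Each $\sigma_k$ lies in a unique vertical strip $[x_{j_k-1},x_{j_k}]\times[0,1]$; together with the labels of the two extreme endpoints (in $S_{\pi,\mathbf a}$, or a single label if $\rho$ is closed) and the outgoing separatrix at each endpoint singularity, the sequence $(j_1,\ldots,j_N)$ constitutes what I will call the \emph{combinatorial type} $\tau$ of $\rho$.

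Next I take $\mathbf a'\in\Delta^{n-1}$ close to $\mathbf a$ with $\mathscr R(T_{\pi,\mathbf a'})\supset\mathscr R(T_{\pi,\mathbf a})$ and produce a candidate realization $\rho'$ of $r$ for $T_{\pi,\mathbf a'}$ of combinatorial type~$\tau$: starting from the appropriate singularity of $T_{\pi,\mathbf a'}$, I iterate $T_{\pi,\mathbf a'}$ following the prescribed strip sequence. Because $\rho$ is simple and meets singularities only at its endpoints, each intermediate bottom point of $\sigma_k$ lies strictly inside the open interval $(x_{j_k-1}(\mathbf a),x_{j_k}(\mathbf a))$; these are open conditions, so they persist for $\mathbf a'$ in a sufficiently small neighborhood $U$ of $\mathbf a$. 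The only additional requirement, that the last bottom point reach the prescribed terminal singularity (or close up on the starting point in the loop case), is a single integer affine equation obtained by expressing this terminal coordinate as an iterated sum of translations by the $a_i'$; a direct computation identifies it with $\int_\rho\omega_{\pi,\mathbf a'}=0$, i.e.\ $r(\mathbf a')=0$. This is precisely what the inclusion $r\in\mathscr R(T_{\pi,\mathbf a})\subset\mathscr R(T_{\pi,\mathbf a'})$ provides.

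Finally, simplicity of the resulting curve $\rho'\subset\Sigma_{\pi,\mathbf a'}$ — disjointness of the segments $\sigma_k,\sigma_\ell$ after gluing — is a finite collection of strict inequalities that hold at $\mathbf a$, so shrinking $U$ further preserves it. I expect the main delicate point to be the bookkeeping at the endpoints, in particular specifying which outgoing separatrix leaves each endpoint singularity under perturbation: this choice is combinatorial but must remain unambiguously encoded by $\tau$ throughout $U$. Beyond that, the proof is a routine continuity argument, with the linear equation $r(\mathbf a')=0$ doing all the nontrivial work.
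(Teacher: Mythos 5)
Your argument has a genuine gap at the step ``Because $\rho$ is simple and meets singularities only at its endpoints, each intermediate bottom point of $\sigma_k$ lies strictly inside the open interval $(x_{j_k-1},x_{j_k})$.'' Simplicity does not give this: a realization is only required to be a simple arc or closed curve contained in a (typically singular) leaf, and such a curve may pass through singularities or marked points of $S_{\pi,\mathbf a}$ in its \emph{interior} --- e.g.\ it can be a chain of several saddle connections traversing intermediate saddles, or run through a marked point. At every such interior crossing the condition ``the orbit hits exactly this point of $S_{\pi,\mathbf a}$'' is an equation, not an open condition, so your count of ``one linear equation plus finitely many open conditions'' is wrong in general. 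Moreover, these extra equations are \emph{not} consequences of the single relation $r(\mathbf a')=0$ that you use: if $\rho=\rho_1\cup\rho_2$ is a union of two saddle connections with classes $r_1,r_2$ and $r=r_1+r_2$, a perturbation keeping $r_1(\mathbf a')+r_2(\mathbf a')=0$ but with $r_1(\mathbf a')\neq0$ destroys both saddle connections, and your construction of $\rho'$ breaks down at the intermediate singularity. This is exactly why the proposition assumes the full inclusion $\mathscr R(T_{\pi,\mathbf a'})\supset\mathscr R(T_{\pi,\mathbf a})$ rather than just $r(\mathbf a')=0$; the fact that your proof never uses more than $r(\mathbf a')=0$ is the symptom of the gap.

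The repair is precisely the step your write-up skips and which the paper makes central: decompose $r=r_1+\cdots+r_k$ and $\rho=\rho_1\cup\cdots\cup\rho_k$ into \emph{elementary} realizations, each a single saddle connection or a regular closed leaf. Each $r_i$ is itself a restriction realized by $T_{\pi,\mathbf a}$, hence $r_i\in\mathscr R(T_{\pi,\mathbf a})\subset\mathscr R(T_{\pi,\mathbf a'})$, so $r_i(\mathbf a')=0$ for every $i$; your unfolding/continuity argument (which is essentially correct for an arc or loop meeting $S_{\pi,\mathbf a}$ only at its endpoints, and amounts to the persistence of saddle connections and closed leaves with vanishing period) then applies to each piece separately, and the union of the perturbed pieces realizes $r$ for $T_{\pi,\mathbf a'}$. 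With that modification your proof coincides in substance with the paper's.
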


This statement generalizes one of the arguments of~\cite{z84}.

\begin{proof}When $\mathbf a'$ varies within a small neighborhood of $\mathbf a$ the corresponding surface $\Sigma_{\pi,\mathbf a'}$
can be identified with $\Sigma_{\pi,\mathbf a}$ so that $S_{\pi,\mathbf a'}$ coincides with $S_{\pi,\mathbf a}$ and
the $1$-form $\omega_{\pi,\mathbf a'}$ remains close to $\omega_{\pi,\mathbf a}$.

Let $r=r_1+\ldots+r_k$ be a decomposition of $r$ into restrictions that admit an elementary realization, which clearly always exists.
Let $\rho_1,\ldots,\rho_k$ be elementary realizations of $r_1,\ldots,r_k$, respectively. Each~$\rho_i$ is either a regular closed leaf
(this may occur only if $k=1$, $r=r_1$) or a saddle connection. Saddle connections and closed leaves
persist under small perturbations if the integral of the $1$-form over them remains zero. The latter does
occur for $\omega_{\pi,\mathbf a'}$ if $\mathscr R(T_{\pi,\mathbf a'})\supset\mathscr R(T_{\pi,\mathbf a})$
since $r_i\in\mathscr R(T_{\pi,\mathbf a})$, $i=1,\ldots,k$.
\end{proof}

\subsection{Rich and poor restriction spaces}\label{rich-and-poor}

Let $\mathscr V$ be a finite dimensional vector space over $\mathbb R$ or $\mathbb Q$ equipped with a skew-symmetric
bilinear form $\langle\,,\rangle$.
Recall that a subspace $\mathscr W\subset\mathscr V$ is called \emph{isotropic}
if the restriction of $\langle\,,\rangle$ vanishes on $\mathscr W$.
A subspace $\mathscr W\subset\mathscr V$ is called \emph{coisotropic} if
$\langle u,v\rangle=0$ $\forall v\in\mathscr W$ implies $u\in\mathscr W$.

If $\mathscr V$ is symplectic, i.e.\ the form $\langle\,,\rangle$ is non-degenerate,
then any maximal isotropic subspace $\mathscr W\subset\mathscr V$
is called \emph{a Lagrangian subspace} of $\mathscr V$. The dimension
of a Lagrangian subspace is always one half of~$\dim\mathscr V$.
A subspace $\mathscr W\subset\mathscr V$ of a symplectic space is coisotropic if
and only if $\mathscr W$
contains a Lagrangian subspace.

Denote by $\varphi$ the map from $\mathscr V$ to
the dual space $\mathscr V^*$ given by $v\mapsto\langle v,\cdot\,\rangle$.
The space $\mathscr V/\ker\varphi$ is symplectic with respect to $\langle\,,\rangle$
(which is well-defined on the quotient space by construction),
and $\varphi$ transfers the symplectic structure to $\im\varphi\subset V^*$
via the natural isomorphism $\mathscr V/\ker\varphi\rightarrow\im\varphi$.

Let $\mathscr R\subset\mathscr V^*$ be a vector subspace. The following three conditions are equivalent:
\begin{enumerate}
\item the intersection $\mathscr R\cap\im\varphi$ is coisotropic;
\item the subspace $\mathrm{Ann}(\mathscr R)=
\{v\in\mathscr V\;;\;r(v)=0\ \forall r\in\mathscr R\}$ is isotropic;
\item the image of $\langle\,,\rangle\in\mathscr V^*\wedge\mathscr V^*$ in $(\mathscr V^*/\mathscr R)\wedge(\mathscr V^*/\mathscr R)$
under the natural projection $\mathscr V^*\wedge\mathscr V^*\rightarrow(\mathscr V^*/\mathscr R)\wedge(\mathscr V^*/\mathscr R)$
iz zero.
\end{enumerate}

\begin{defi}
A subspace $\mathscr R\subset\mathscr V^*$ will be called \emph{rich} with respect to
$\langle\,,\rangle$ if the three equivalent conditions above are satisfied.
\end{defi}

\begin{defi}
A restriction space for an interval exchange transformation
$T_{\pi,\mathbf a}$ is said to be \emph{rich} if it is rich with respect
to the bilinear form $\langle\,,\rangle_\pi$ on $\mathbb R^n$ defined by
\begin{equation}\label{bilinearform}\langle e_i,e_j\rangle_\pi=\left\{\begin{array}{rl}
1&\text{if }i<j\text{ and }\pi^{-1}(i)>\pi^{-1}(j),\\
-1&\text{if }i>j\text{ and }\pi^{-1}(i)<\pi^{-1}(j),\\
0&\text{otherwise},
\end{array}\right.
\end{equation}
where $e_1,\ldots,e_n$ is the standard basis of $\mathbb R^n$.
\end{defi}

Now we describe the topological meaning of the above definitions. Let $(\Sigma_{\pi,\mathbf a},\omega_{\pi,\mathbf a})$ be
the surface and the closed $1$-form associated with an interval exchange transformation $T_{\pi,\mathbf a}$
as described in Section~\ref{prelimsec}.

Let $\mathscr V$ be the first relative cohomology group $H^1(\Sigma_{\pi,\mathbf a},S_{\pi,\mathbf a};\mathbb R)$.
The dual vector space $\mathscr V^*$ is naturally identified
with $H_1(\Sigma_{\pi,\mathbf a},S_{\pi,\mathbf a};\mathbb R)$.
Let $e_1,\ldots,e_n$ be the basis in $\mathscr V$ dual to the basis $h_1,\ldots,h_n$
introduced in Section~\ref{restrictionsec}.

Then the bilinear form $\langle\,,\rangle_\pi$ defined by \eqref{bilinearform} is nothing else but
``the intersection form'' on $H^1(\Sigma_{\pi,\mathbf a},S_{\pi,\mathbf a};\mathbb R)$:
$$\langle\eta_1,\eta_2\rangle_\pi=\int\limits_{\Sigma_{\pi,\mathbf a}}\eta_1\wedge\eta_2$$
(provided that the orientation of $\Sigma_{\pi,\mathbf a}$ is chosen appropriately).

In the notation introduced in the beginning of the section the map $\varphi$ is the composition $p\circ\mathrm{PD}$,
where $\mathrm{PD}:H^1(\Sigma_{\pi,\mathbf a},S_{\pi,\mathbf a};\mathbb R)\rightarrow H_1(\Sigma_{\pi,\mathbf a}\setminus S_{\pi,\mathbf a};\mathbb R)$
is the Poincar\'e duality operator and $p:H_1(\Sigma_{\pi,\mathbf a}\setminus S_{\pi,\mathbf a};\mathbb R)
\rightarrow H_1(\Sigma_{\pi,\mathbf a};\mathbb R)\subset H_1(\Sigma_{\pi,\mathbf a},S_{\pi,\mathbf a};\mathbb R)$ the natural projection.

Thus, the symplectic spaces $\mathscr V/\ker\varphi$ and $\im\varphi$ are $H^1(\Sigma_{\pi,\mathbf a};\mathbb R)$
and $H_1(\Sigma_{\pi,\mathbf a};\mathbb R)$, respectively, and the respective symplectic forms on them
are the $\smile$ and $\frown$ products.

\begin{defi}
\emph{The Sah--Arnoux--Fathi (SAF) invariant of $T_{\pi,\mathbf a}$} is the following element of the rational vector space $\mathbb R\wedge_{\mathbb Q}\mathbb R$:
$$\saf(T_{\pi,\mathbf a})=\sum_{i=1}^na_i\wedge_{\mathbb Q}(\widetilde x_{\pi^{-1}(i)}-x_i)=\sum_{i<j}(a_i\wedge_{\mathbb Q}a_j-a_{\pi(i)}\wedge_{\mathbb Q}a_{\pi(j)}).$$
\end{defi}

The SAF invariant was introduced by P.\,Arnoux in \cite{Ar80} who also showed that $\saf(T_{\pi, \mathbf a})$ is an invariant of the measured foliation induced by~$\omega_{\pi,\mathbf a}$ on $\Sigma_{\pi,\mathbf a}$ (in particular, it is invariant under the Rauzy induction). 

The SAF invariant 
has many applications in the study of interval exchanges. In particular, it
has been used to characterize group properties of interval exchange transformations 
\cite{V} and to study the Veech group of translation surfaces~\cite{as09,ks}.

It is known that the SAF invariant vanishes for periodic (i.e.\ such that every orbit is finite) interval exchange transformations. Arnoux and Yoccoz constructed in \cite{AY} the first example of minimal and uniquely ergodic interval exchange transformation for which $\saf$ is equal to zero.

The Galois flux introduced in~\cite{McM} can be viewed as an instance of the $\saf$ invariant
in the case when the parameters of an interval exchange transformation belong to a quadratic field.

The following statement appears in~\cite{Ar80} with an attribution to G.\,Levitt.

\begin{prop}\label{saf=0}
The full restriction space $\mathscr R(T_{\pi,\mathbf a})$ is rich if and only if $\saf(T_{\pi,\mathbf a})=0$.
\end{prop}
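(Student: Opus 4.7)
The plan is to interpret both the SAF invariant and the richness of $\mathscr R(T_{\pi,\mathbf a})$ as statements about the image of the intersection form $\langle\,,\rangle_\pi$ under one and the same rational evaluation map, so that they become literally the same condition.

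First I would observe that, as $\langle\,,\rangle_\pi$ has integer entries, it can be regarded as an element of $(\mathbb Q^n)^*\wedge_{\mathbb Q}(\mathbb Q^n)^*$, and that evaluation at $\mathbf a$ gives a $\mathbb Q$-linear map $\alpha\colon(\mathbb Q^n)^*\to\mathbb R$, $r\mapsto r(\mathbf a)$. A direct combinatorial manipulation of the defining sum for $\saf$, splitting the indices according to whether $\pi^{-1}(i)>\pi^{-1}(j)$, yields the key identity
\[(\alpha\wedge_{\mathbb Q}\alpha)(\langle\,,\rangle_\pi)=c\cdot\saf(T_{\pi,\mathbf a})\in\mathbb R\wedge_{\mathbb Q}\mathbb R\]
for a nonzero rational constant $c$ (concretely $c=1/2$ with the conventions of the paper).

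Next I would identify the kernel of $\alpha$ with a rational refinement of the restriction space. Setting $\mathscr R_{\mathbb Q}(T)=\{r\in(\mathbb Q^n)^*:r(\mathbf a)=0\}=\ker\alpha$, the definition of $\mathscr R(T_{\pi,\mathbf a})$ as the $\mathbb R$-span of integral restrictions gives $\mathscr R(T_{\pi,\mathbf a})=\mathscr R_{\mathbb Q}(T)\otimes_{\mathbb Q}\mathbb R$ inside $(\mathbb R^n)^*$. The induced map $\bar\alpha\colon(\mathbb Q^n)^*/\mathscr R_{\mathbb Q}(T)\to\mathbb R$ is then an isomorphism of $\mathbb Q$-vector spaces onto the $\mathbb Q$-span $\langle a_1,\ldots,a_n\rangle_{\mathbb Q}\subset\mathbb R$. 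Because this span sits inside $\mathbb R$ as a $\mathbb Q$-subspace, the resulting map $\bar\alpha\wedge\bar\alpha$ is injective into $\mathbb R\wedge_{\mathbb Q}\mathbb R$.

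Combining these two steps, $\saf(T_{\pi,\mathbf a})=0$ is equivalent to the image of $\langle\,,\rangle_\pi$ vanishing in the rational quotient $((\mathbb Q^n)^*/\mathscr R_{\mathbb Q}(T))\wedge_{\mathbb Q}((\mathbb Q^n)^*/\mathscr R_{\mathbb Q}(T))$. A final scalar extension from $\mathbb Q$ to $\mathbb R$---using that tensoring $\mathbb Q$-vector spaces with $\mathbb R$ is exact and commutes with taking quotients and exterior squares---turns this into the vanishing of the image of $\langle\,,\rangle_\pi$ in $((\mathbb R^n)^*/\mathscr R(T_{\pi,\mathbf a}))\wedge_{\mathbb R}((\mathbb R^n)^*/\mathscr R(T_{\pi,\mathbf a}))$, which is precisely condition (3) in the definition of richness. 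I expect the main difficulty to be bookkeeping rather than geometry: one must carefully distinguish the $\mathbb Q$- and $\mathbb R$-structures on the dual space, get the sign conventions right in the combinatorial identity for $\saf$, and confirm that exterior squaring commutes with scalar extension in the precise form used here.
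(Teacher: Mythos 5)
Your proposal is correct and follows essentially the same route as the paper's proof: the paper likewise reduces richness of $\mathscr R(T_{\pi,\mathbf a})$ to richness of its rational part, identifies that rational part with the kernel of the evaluation map $h_i\mapsto a_i$, and observes that the induced map on $\wedge^2_{\mathbb Q}$ sends $\langle\,,\rangle_\pi$ to $\saf(T_{\pi,\mathbf a})$ (up to the factor $2$ you track explicitly). Your write-up merely makes explicit two points the paper leaves implicit, namely the injectivity of $\bar\alpha\wedge\bar\alpha$ on the quotient and the compatibility of exterior squares with the scalar extension from $\mathbb Q$ to $\mathbb R$.
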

\begin{proof}
Denote by $\mathscr R_0$ the intersection of $\mathscr R=\mathscr R(T_{\pi,\mathbf a})\subset
H_1(\Sigma_{\pi,\mathbf a},S_{\pi,\mathbf a};\mathbb R)$ with $H_1(\Sigma_{\pi,\mathbf a},S_{\pi,\mathbf a};\mathbb Q)$.
Since the form $\langle\,,\rangle_\pi$ has integer coefficients and $\mathscr R(T_{\pi,\mathbf a})$
is generated by integral relative cycles, $\mathscr R$ is a rich subspace of $H_1(\Sigma_{\pi,\mathbf a},S_{\pi,\mathbf a};\mathbb R)$
if and only if $\mathscr R_0$ is a rich subspace of $H_1(\Sigma_{\pi,\mathbf a},S_{\pi,\mathbf a};\mathbb Q)$.

By construction $\mathscr R_0$ is the kernel of the $\mathbb Q$-linear map $A:H_1(\Sigma_{\pi,\mathbf a},S_{\pi,\mathbf a};\mathbb Q)\rightarrow\mathbb R$
defined by $A(h_i)=a_i$, $i=1,\ldots,n$. The claim now follows from the fact that $A$ induces
a map $H_1(\Sigma_{\pi,\mathbf a},S_{\pi,\mathbf a};\mathbb Q)\wedge_{\mathbb Q}
H_1(\Sigma_{\pi,\mathbf a},S_{\pi,\mathbf a};\mathbb Q)\rightarrow\mathbb R\wedge_{\mathbb Q}\mathbb R$ that takes the form $\langle\,,\rangle_\pi$ to
$\saf(T_{\pi,\mathbf a})$.
\end{proof}

\begin{rema}
It immediately follows from Proposition~\ref{saf=0} that, for a fixed~$\pi$, the set of $\mathbf a$ such that
$\saf(T_{\pi,\mathbf a})=0$ has measure zero.
\end{rema}

\subsection{Asymptotic cycle and separating cycle}\label{asymptotic-n-separating}

Let $\pi\in S_n$ and $\mathbf a\in\Delta^{n-1}$ be fixed. Following A.\,Zorich~\cite{z97,z99}, for any $x\in[0,1)$, we denote by
$c_{\pi,\mathbf a,k}(x)$ the homology class from $H_1(\Sigma_{\pi,\mathbf a};\mathbb R)$ presented by the closed curve
$$\Bigl(\bigcup\limits_{i=1}^k\{T_{\pi,\mathbf a}^{i-1}(x)\}\times[0,1]\Bigr)\cup\alpha,\quad\text{where }\alpha=[x,T_{\pi,\mathbf a}^k(x)]\times\{0\}\text{ or }
[T_{\pi,\mathbf a}^k(x),x]\times\{0\},$$
under identifications~\eqref{identifications}. (The orientation is chosen so that ``vertical'' segments
are directed upwards.)

The following is a particular case of the notion of asymptotic cycle introduced by S.\,Schwartzman~\cite{sch}
in different terms.

\begin{defi}
The homology class $c_{\pi,\mathbf a}$ that is Poincar\'e dual to $[\omega_{\pi,\mathbf a}]\in H^1(\Sigma_{\pi,\mathbf a};\mathbb R)$
is called \emph{the asymptotic cycle of $T_{\pi,\mathbf a}$}.
\end{defi}

By definition the asymptotic cycle belongs to the absolute homology $H_1(\Sigma_{\pi,\mathbf a};\mathbb R)$,
which is a subspace of the relative homology $H_1(\Sigma_{\pi,\mathbf a},S_{\pi,\mathbf a};\mathbb R)$.

\begin{prop}\label{asymptoticcycle}
If $T_{\pi,\mathbf a}$ is uniquely ergodic, then for any $x\in[0,1)$ there exists a limit

\begin{equation}\label{limit}
\lim_{k\rightarrow\infty}\frac{c_{\pi,\mathbf a,k}(x)}k.\end{equation}
This limit does not depend on $x$ and is equal up to a non-zero
multiple
to the asymptotic cycle $c_{\pi,\mathbf a}$.
Moreover, the convergence is uniform with
respect to $x$.
\end{prop}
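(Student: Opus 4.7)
The plan is to pair the cycles $c_{\pi,\mathbf a,k}(x)$ with an arbitrary cohomology class $[\eta]\in H^1(\Sigma_{\pi,\mathbf a};\mathbb R)$, reduce the resulting scalar pairing to a Birkhoff sum of a Riemann integrable function on $[0,1)$ over the iterates of $T_{\pi,\mathbf a}$, apply unique ergodicity to pass to a uniform limit, and identify the resulting linear functional on cohomology with the one given by integration against $\omega_{\pi,\mathbf a}$. By Poincaré duality this will identify the limit with $c_{\pi,\mathbf a}$ up to a non-zero factor.

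Concretely, choose a smooth closed representative $\eta$ of $[\eta]$ and define
$$f_\eta(x)=\int_{\{x\}\times[0,1]/\sim}\eta,\qquad x\in[0,1)\setminus\{x_1,\ldots,x_{n-1}\}.$$
Because the identifications in \eqref{identifications} are smooth on each open subinterval between successive discontinuity points, $f_\eta$ is smooth and bounded off the finite set $\{x_1,\ldots,x_{n-1}\}$, hence Riemann integrable. Writing $c_{\pi,\mathbf a,k}(x)$ as the concatenation of $k$ vertical segments with a short horizontal closing arc $\alpha$, we get
$$\int_{c_{\pi,\mathbf a,k}(x)}\eta=\sum_{i=0}^{k-1}f_\eta\bigl(T_{\pi,\mathbf a}^{i}(x)\bigr)+\int_\alpha\eta,$$
and the last term is bounded independently of $k$ and $x$.

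Unique ergodicity of $T_{\pi,\mathbf a}$ together with Riemann integrability of $f_\eta$ implies that the Birkhoff averages of $f_\eta$ converge uniformly in $x$ to $\int_0^1 f_\eta(t)\,dt$. By a standard Fubini computation on the suspension square,
$$\int_0^1 f_\eta(t)\,dt=\int_{\Sigma_{\pi,\mathbf a}}\omega_{\pi,\mathbf a}\wedge\eta,$$
which is, up to sign, the pairing of $[\eta]$ with the Poincaré dual of $[\omega_{\pi,\mathbf a}]$, namely with $c_{\pi,\mathbf a}$. Since $H^1(\Sigma_{\pi,\mathbf a};\mathbb R)$ is finite-dimensional, uniform convergence of each scalar pairing $k^{-1}\int_{c_{\pi,\mathbf a,k}(x)}\eta$ yields uniform convergence of the cycles \eqref{limit} in $H_1(\Sigma_{\pi,\mathbf a};\mathbb R)$ to a class that pairs with every $[\eta]$ exactly as $c_{\pi,\mathbf a}$ does, giving the claim.

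The only delicate point is the uniform convergence of Birkhoff averages of a Riemann integrable but potentially discontinuous observable. This is handled by sandwiching $f_\eta$ between continuous functions on $[0,1)$ whose integrals differ by an arbitrarily small amount and applying the classical uniform ergodic theorem for continuous observables on uniquely ergodic systems; the remaining identifications are purely cohomological bookkeeping.
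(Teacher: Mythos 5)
Your argument is correct, but note that the paper itself does not prove Proposition~\ref{asymptoticcycle}: it attributes the statement to Zorich and refers to Viana's notes, where the standard proof writes $c_{\pi,\mathbf a,k}(x)$, up to a uniformly bounded error, as a combination of the fixed loops ``vertical segment over the $i$-th subinterval plus horizontal return'', so that the coefficients are the visit counts of the orbit of $x$ to the subintervals, and then invokes uniform convergence of visit frequencies to $a_i$ under unique ergodicity. Your proof is the dual formulation of the same mechanism: instead of Birkhoff sums of indicators of the subintervals you take Birkhoff sums of $f_\eta$ for closed $1$-forms $\eta$, and both routes rest on the uniform ergodic theorem for Riemann-integrable observables of a uniquely ergodic system (your sandwich step; the interval indicators need it just as much) together with a uniformly bounded closing error from the horizontal arc $\alpha$. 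What your version buys is that the limit is identified directly as the Poincar\'e dual of $[\omega_{\pi,\mathbf a}]$ through $\int_0^1 f_\eta\,dt=\int_{\Sigma_{\pi,\mathbf a}}\omega_{\pi,\mathbf a}\wedge\eta$, with no choice of homology basis. One point to tighten: boundedness of $f_\eta$ is not completely automatic, since the fibers through $x_1,\dots,x_{n-1}$ pass through the singularities, where the flat and smooth structures disagree; the cleanest fix is to represent $[\eta]$ by a closed form vanishing in a neighborhood of $S_{\pi,\mathbf a}$ (near each singular point write $\eta=df$ on a disc and subtract $d(\chi f)$ for a bump function $\chi$), after which $f_\eta$ is bounded and continuous off the finitely many singular fibers and your Riemann-integrability, sandwiching and Fubini steps go through verbatim.
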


The existence of the limit~\eqref{limit} for almost all $x$ is a simple consequence
of Birkhoff's ergodic theorem. The fact that for a generic irreducible interval exchange transformation
the convergence is uniform and holds for all $x\in[0,1)$ was observed by Zorich~\cite{z97,z99}.
A proof of Proposition~\ref{asymptoticcycle} in exactly this form can be found in~\cite[Section~3]{Vi}.
Due to  the normalization condition $\sum_ia_i=1$ `a non-zero multiple' in the formulation
of Proposition~\ref{asymptoticcycle} is actually $\pm1$.

\begin{prop}\label{obvious}
The full restriction space of $T_{\pi,\mathbf a}$ is rich if and only if it contains the asymptotic cycle~$c_{\pi,\mathbf a}$.
\end{prop}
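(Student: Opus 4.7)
The plan is to reformulate both conditions in terms of the annihilator
\[
\mathscr N:=\mathrm{Ann}(\mathscr R(T_{\pi,\mathbf a}))\subset H^1(\Sigma_{\pi,\mathbf a},S_{\pi,\mathbf a};\mathbb R),
\]
and then exploit a rationality argument. The subspace $\mathscr N$ is defined over $\mathbb Q$ (because $\mathscr R(T_{\pi,\mathbf a})$ is spanned by integer classes) and contains $[\omega_{\pi,\mathbf a}]$ (because every $r\in\mathscr R(T_{\pi,\mathbf a})$ satisfies $[\omega_{\pi,\mathbf a}](r)=r(\mathbf a)=0$). By the three equivalent conditions in Subsection~\ref{rich-and-poor}, richness of $\mathscr R(T_{\pi,\mathbf a})$ is the same as isotropy of $\mathscr N$ for $\langle\,,\rangle_\pi$. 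On the other hand, non-degeneracy of the evaluation pairing gives $c_{\pi,\mathbf a}\in\mathscr R(T_{\pi,\mathbf a})$ if and only if $\eta(c_{\pi,\mathbf a})=0$ for every $\eta\in\mathscr N$, and the identity $c_{\pi,\mathbf a}=\varphi([\omega_{\pi,\mathbf a}])$ rewrites this as $\langle[\omega_{\pi,\mathbf a}],\eta\rangle_\pi=0$ for all $\eta\in\mathscr N$.

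The direction $(\Rightarrow)$ is then immediate: if $\mathscr N$ is isotropic, then, since $[\omega_{\pi,\mathbf a}]$ itself belongs to $\mathscr N$, the required vanishing of $\langle[\omega_{\pi,\mathbf a}],\eta\rangle_\pi$ for $\eta\in\mathscr N$ holds trivially.

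For $(\Leftarrow)$, fix a $\mathbb Q$-basis $\eta_1,\ldots,\eta_d$ of $\mathscr N$, where $d=\dim\mathscr N=\dim_{\mathbb Q}\langle a_1,\ldots,a_n\rangle$, and expand $[\omega_{\pi,\mathbf a}]=\sum_{i=1}^d\lambda_i\eta_i$ with $\lambda_i\in\mathbb R$. The key lemma is that $\lambda_1,\ldots,\lambda_d$ are $\mathbb Q$-linearly independent. Indeed, a nontrivial relation $\sum c_i\lambda_i=0$ with $c_i\in\mathbb Q$ would place $[\omega_{\pi,\mathbf a}]$ in the $\mathbb R$-extension of the proper rational subspace $\mathscr N':=\{\sum y_i\eta_i:\sum c_iy_i=0\}\subsetneq\mathscr N$; any rational $r\in\mathrm{Ann}(\mathscr N')\setminus\mathscr R(T_{\pi,\mathbf a})$ (which exists for dimension reasons) would then satisfy $r(\mathbf a)=[\omega_{\pi,\mathbf a}](r)=0$, contradicting the fact that $\mathscr R(T_{\pi,\mathbf a})$ is the full restriction space and so contains every integer restriction satisfied by $\mathbf a$. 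Given this independence, applying the hypothesis $\langle[\omega_{\pi,\mathbf a}],\eta\rangle_\pi=0$ with $\eta=\eta_j$ yields
\[
0=\langle[\omega_{\pi,\mathbf a}],\eta_j\rangle_\pi=\sum_{i=1}^d\lambda_i\langle\eta_i,\eta_j\rangle_\pi,
\]
with $\langle\eta_i,\eta_j\rangle_\pi\in\mathbb Q$, because the form has integer entries in the basis dual to $h_1,\ldots,h_n$ and the $\eta_i$ are rational. The $\mathbb Q$-linear independence of the $\lambda_i$ forces every $\langle\eta_i,\eta_j\rangle_\pi$ to vanish, so $\mathscr N$ is isotropic and $\mathscr R(T_{\pi,\mathbf a})$ is rich.

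The main obstacle is precisely the $\mathbb Q$-linear independence of $\lambda_1,\ldots,\lambda_d$: this is where the hypothesis that $\mathscr R(T_{\pi,\mathbf a})$ is the \emph{full} restriction space is used essentially, via the characterization of $\mathscr N$ as the smallest $\mathbb Q$-rational subspace of $H^1(\Sigma_{\pi,\mathbf a},S_{\pi,\mathbf a};\mathbb R)$ containing $[\omega_{\pi,\mathbf a}]$. Once that lemma is in hand, the remainder of the argument is a routine linear-algebra manipulation over $\mathbb Q$.
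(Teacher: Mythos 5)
Your proof is correct, but it takes a genuinely different route from the paper's. The paper uses the first of the three equivalent forms of richness and works entirely in absolute homology: it observes that $\mathscr W=\mathscr R(T_{\pi,\mathbf a})\cap H_1(\Sigma_{\pi,\mathbf a};\mathbb R)$ is spanned by the integral cycles $c$ with $c\frown c_{\pi,\mathbf a}=0$ (this is where fullness and integrality enter), deduces $c_{\pi,\mathbf a}\in\mathscr W$ from coisotropy via $\mathscr W\subseteq c_{\pi,\mathbf a}^{\perp}$, and, for the converse, picks an integral $s\in\mathscr W^{\perp}\setminus\mathscr W$, which then must satisfy $s\frown c_{\pi,\mathbf a}\ne0$, so $c_{\pi,\mathbf a}\notin\mathscr R(T_{\pi,\mathbf a})$; this $s$ is exactly a separating cycle in the sense of Definition~\ref{separating-def}. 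You instead use the second form (isotropy of $\mathscr N=\mathrm{Ann}(\mathscr R(T_{\pi,\mathbf a}))$), and your key lemma---that fullness makes $\mathscr N$ the smallest rational subspace containing $[\omega_{\pi,\mathbf a}]$, so the coefficients $\lambda_i$ in a rational basis are $\mathbb Q$-independent and the rational Gram matrix $\langle\eta_i,\eta_j\rangle_\pi$ is forced to vanish---is an arithmetic argument much closer in spirit to the paper's proof of Proposition~\ref{saf=0} (richness iff $\saf=0$) than to its proof of this proposition. Both arguments rest on the same two inputs (fullness of the restriction space and integrality of the form), and both need the identification $c_{\pi,\mathbf a}=\varphi([\omega_{\pi,\mathbf a}])$, which you assert without comment; it follows from compatibility of Poincar\'e duality with the maps $H^1(\Sigma,S)\rightarrow H^1(\Sigma)$ and $H_1(\Sigma\setminus S)\rightarrow H_1(\Sigma)$ and deserves a sentence in a final write-up. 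What the paper's route buys is brevity and a geometric byproduct: in the poor case it hands you an explicit integral separating cycle. What your route buys is the sharper structural fact that $\mathrm{Ann}(\mathscr R(T_{\pi,\mathbf a}))$ is the minimal rational subspace through $[\omega_{\pi,\mathbf a}]$, which makes the link between this proposition and the SAF criterion transparent.
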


\begin{proof}
The space $\mathscr W=\mathscr R(T_{\pi,\mathbf a})\cap H_1(\Sigma_{\pi,\mathbf a};\mathbb R)$ is generated by all $c\in
H_1(\Sigma_{\pi,\mathbf a};\mathbb Z)$ such that $c\frown c_{\pi,\mathbf a}=0$. If $\mathscr W$
is coisotropic this implies $c_{\pi,\mathbf a}\in\mathscr W$.

If $\mathscr W$ is not coisotropic, then there exists $s\in H_1(\Sigma_{\pi,\mathbf a};\mathbb Z)\setminus\mathscr W$ such that
$s\frown r=0$ for all $r\in\mathscr W$.
Since $s\notin\mathscr W$ we have $s\frown c_{\pi,\mathbf a}\ne0$.
Therefore, $c_{\pi,\mathbf a}\notin\mathscr W$, which implies $c_{\pi,\mathbf a}\notin\mathscr R(T_{\pi,\mathbf a})$.
\end{proof}

\begin{defi}\label{separating-def}
An element $s\in H_1(\Sigma_{\pi,\mathbf a};\mathbb Z)$ such that $s\frown r=0$ for
all $r\in\mathscr R(T_{\pi,\mathbf a})\cap H_1(\Sigma_{\pi,\mathbf a};\mathbb R)$ and $s\frown c_{\pi,\mathbf a}\ne0$ will be called \emph{a separating cycle}
for $T_{\pi,\mathbf a}$. As we have just seen, it exists if and only if the full restriction space $\mathscr R(T_{\pi,\mathbf a})$ is poor.
\end{defi}

\begin{prop}\label{prop-separ}
If a separating cycle for $T_{\pi,\mathbf a}$ can be presented by a closed transversal of the foliation~$\mathcal F_{\pi,\mathbf a}$ such
that it intersects all separatrices (and hence, all leaves) of $\mathcal F_{\pi,\mathbf a}$, then $T_{\pi,\mathbf a}$ is minimal.
\end{prop}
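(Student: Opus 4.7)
The plan is to argue by contradiction: assume $T_{\pi,\mathbf a}$ is not minimal, so that the foliation $\mathcal F_{\pi,\mathbf a}$ is not minimal either, and produce a homology class in $\mathscr R(T_{\pi,\mathbf a})\cap H_1(\Sigma_{\pi,\mathbf a};\mathbb R)$ with non-zero intersection against $s=[c]$. This will contradict the very definition of a separating cycle.

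The first step is to invoke the structure theorem for measured foliations defined by a closed $1$-form on a compact oriented surface: if $\mathcal F_{\pi,\mathbf a}$ is not minimal, then either (a) it has a closed regular leaf, or (b) the surface $\Sigma_{\pi,\mathbf a}$ decomposes along a finite union of saddle connections into at least two compact sub-surfaces, on each of which $\mathcal F_{\pi,\mathbf a}$ restricts to a minimal or periodic component. In case (a) a closed regular leaf $\ell$ immediately yields the desired class: since $\ell$ is contained in the kernel foliation of $\omega_{\pi,\mathbf a}$, we have $\int_\ell\omega_{\pi,\mathbf a}=0$, so $[\ell]\in\mathscr R(T_{\pi,\mathbf a})$; because $c$ is a closed transversal and $\ell$ a leaf, every intersection point of $c$ with $\ell$ carries the same sign, and at least one such point exists by the hypothesis that $c$ meets every leaf. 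Hence $s\frown[\ell]\ne0$, and the separating property fails.

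Case (b) is where I expect the main technical difficulty. The idea I would pursue is to choose a proper invariant sub-surface $\Sigma_1\subsetneq\Sigma_{\pi,\mathbf a}$ from the decomposition and to form a Schwartzman-type asymptotic cycle $\alpha$ of the restricted foliation $\mathcal F_{\pi,\mathbf a}|_{\Sigma_1}$, viewed after push-forward as a class in $H_1(\Sigma_{\pi,\mathbf a};\mathbb R)$. Being approximated by arbitrarily long leaf segments, $\alpha$ satisfies $\int_\alpha\omega_{\pi,\mathbf a}=0$, so $\alpha\in\mathscr R(T_{\pi,\mathbf a})$; and $s\frown\alpha$ is proportional to the total transverse $|\omega_{\pi,\mathbf a}|$-measure that $c$ accumulates inside $\Sigma_1$, which is strictly positive since $c$ meets every leaf of $\mathcal F_{\pi,\mathbf a}|_{\Sigma_1}$ and $\Sigma_1$ carries positive transverse measure. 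The main obstacle will be defining $\alpha$ precisely on a sub-surface whose boundary consists of saddle connections and verifying that it descends to a non-trivial class of $H_1(\Sigma_{\pi,\mathbf a};\mathbb R)$ pairing non-trivially with $s$; the persistence ideas already used in the proof of Proposition~\ref{zorich-general} together with the formalism of Subsection~\ref{asymptotic-n-separating} should supply the needed tools.
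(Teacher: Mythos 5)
Your case (a) (a closed regular leaf) is correct and is exactly the paper's argument. The gap is in case (b), at the step ``$\int_\alpha\omega_{\pi,\mathbf a}=0$, so $\alpha\in\mathscr R(T_{\pi,\mathbf a})$''. The full restriction space is \emph{not} the set of all real homology classes with zero $\omega$-period: it is the real span of the \emph{integral} classes with zero period (Subsection~\ref{restrictionsec}). A Schwartzman-type asymptotic cycle of an invariant subsurface is a genuinely real class and in general does not lie in that span (think of a minimal component that is a once-punctured torus with irrational slope: the only integral zero-period classes supported there come from the boundary, and the asymptotic cycle is not proportional to them). In fact your inference, applied to the whole surface, would say that the asymptotic cycle $c_{\pi,\mathbf a}$ always lies in $\mathscr R(T_{\pi,\mathbf a})$ (it always has zero period, being Poincar\'e dual to $[\omega_{\pi,\mathbf a}]$), which by Proposition~\ref{obvious} would force $\mathscr R(T_{\pi,\mathbf a})$ to be rich --- contradicting the very hypothesis of the proposition, since a separating cycle exists only when the full restriction space is poor. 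So the class you produce in case (b) is not one against which the separating cycle is required to have zero intersection, and no contradiction follows.

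The fix is simpler than what you attempt, and it is what the paper does: in case (b) take a boundary component $\rho$ of the invariant subsurface. It is a simple closed curve contained in a (singular) leaf --- a cycle of saddle connections, consistently oriented as a limit of nearby leaves --- hence an \emph{integral} class with $\int_\rho\omega_{\pi,\mathbf a}=0$, i.e.\ a realized loop restriction lying in $\mathscr R(T_{\pi,\mathbf a})\cap H_1(\Sigma_{\pi,\mathbf a};\mathbb R)$. Since the closed transversal meets every separatrix and every leaf, it meets $\rho$, and all intersections contribute with the same sign, so $s\frown[\rho]\ne0$, contradicting the definition of a separating cycle. With this replacement your case distinction collapses into the paper's one-line argument (a realization of a loop restriction exists whenever $\mathcal F_{\pi,\mathbf a}$ is not minimal), and no asymptotic-cycle machinery on components is needed.
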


\begin{proof}
Let $\xi$ be a transversal representing a separating cycle $s$ such that $\xi$ intersects all saddle connections of~$\mathcal F_{\pi,\mathbf a}$.
If $\mathcal F_{\pi,\mathbf a}$ is not minimal, then there must be a realization $\rho$ of a loop restriction $r$.
It must intersect~$\xi$, and the contributions of all intersections to $s\frown r$
will have the same sign. So, we will have $s\frown r\ne0$, a contradiction.
\end{proof}

\begin{rema}
If a homology class $s\in H_1(\Sigma_{\pi,\mathbf a};\mathbb Z)$ can be presented by
a transversal intersecting all the leaves of~$\mathcal F_{\pi,\mathbf a}$, then
we always have $s\frown c_{\pi,\mathbf a}\ne0$.
\end{rema}

\begin{rema} A separating
cycle can alternatively be defined as an element $s$ of $H_1(\Sigma_{\pi,\mathbf a}\setminus S_{\pi,\mathbf a};\mathbb Z)$
such that $s\frown r=0$ for any restriction $r$ and $s\frown c_{\pi,\mathbf a}\ne0$.

In this setting, the assertion of Proposition~\ref{prop-separ} will be that $T_{\pi,\mathbf a}$ satisfies
Keane's condition.
\end{rema}

\begin{exam}\label{exam-seapr}
In Example~\ref{exam-minimal}, the set $S_{\pi,\mathbf a}$ consists of a single point, in which $\mathcal F_{\pi,\mathbf a}$ has a double saddle singularity.
There is a single restriction, up to a multiple, which is $r=3h_1-h_2-h_4$. It can also be presented
by the closed curve $\rho$ consisting of the following three oriented straight line segments (under identifications~\eqref{identifications}):
$$\rho_1=[(a_2,0),(a_1,1)],\ \rho_2=[(a_2+a_4-a_1,0),(a_1+a_2,1)],\ \rho_3=[(a_2+a_3+a_4,0),(a_2+a_3+a_4,1)].$$

To see that $\rho$ represents $3h_1-h_2-h_4$ we compute
$$\int_\rho\,dx=\int_{\rho_1}\,dx+\int_{\rho_2}\,dx+\int_{\rho_3}\,dx=
(a_1-a_2)+(2a_1-a_4)+0=3a_1-a_2-a_4,$$
which holds for any $\mathbf a\in\Delta^3$.

The cycle $s=2h_1-h_2+h_3$ is separating, and if $a_2<a_1$
it can be presented by a transversal $\xi$ obtained by a small deformation from the
closed curve consisting of the following oriented straight line segments:
$$\xi_1=[(a_2,0),(a_1,1)],\ \xi_2=[(a_2,0),(a_1+a_2,1)],\ \xi_3=[(a_1+a_2,1),(a_1+a_2+a_3,1)],$$
see Fig.~\ref{sepcycle}. 
Indeed, for any $\mathbf a\in\Delta^3$ we have
$$\int_\xi dx=\int_{\xi_1}dx+\int_{\xi_2}dx+\int_{\xi_3}dx=
(a_1-a_2)+a_1+a_3=2a_1-a_2+a_3,$$
which implies that $\xi$ represents $s$.
The transversal $\xi$ is disjoint from $\rho$, so, we have $s\frown r=0$.

It is quite obvious (consult Fig.~\ref{sepcycle}) that separatrices emanating
from $(x_1,1)$, $(x_3,1)$ in the downward direction and from $(\widetilde x_1,0)$, $(\widetilde x_2,0)$
in the upward direction hit the transversal $\xi$. One can see that the two remaining separatrices can avoid
meeting $\xi$ only if they form a saddle connection. This means that for some $k\geqslant1$
we have $T_{\pi,\mathbf a}^{-k}(x_2)=x_2+k(a_1+a_3)=\widetilde x_3$,
and $a_1+k(a_1+a_3)=a_4+a_3$. This is an integral restriction that $T_{\pi,\mathbf a}$ is supposed
not to satisfy.

Thus $\xi$ intersects all the separatrices and hence all the leaves of $\mathcal F_{\pi,\mathbf a}$.
Therefore, according
to Proposition~\ref{prop-separ} the foliation $\mathcal F_{\pi,\mathbf a}$ is minimal.
\begin{figure}[ht]
\centerline{\includegraphics{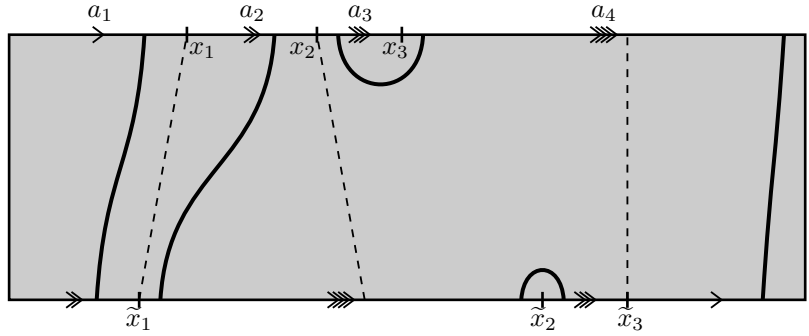}\put(-281.5,117){$a_1$}\put(-224,117){$a_2$}\put(-183,117){$a_3$}\put(-91,117){$a_4$}%
\put(-243,103){$x_1$}\put(-205,103){$x_2$}\put(-170,103){$x_3$}\put(-267,0){$\widetilde x_1$}\put(-114,0){$\widetilde x_2$}%
\put(-81,0){$\widetilde x_3$}}
\caption{A transversal representing a separating cycle (bold line) and the restriction cycle (dashed line)
in Example~\ref{exam-seapr}}\label{sepcycle}
\end{figure}

It also follows from Theorem~\ref{uniquelyergodictheo} below that the transformation $T_{\pi,\mathbf a}$ is
uniquely ergodic for almost all $\mathbf a$ such that $a_1>a_2$ and $\rho(\mathbf a)=0$.

If $a_1<a_2$, then this construction fails because $\int_{\xi_1}<0$ and $\int_{\xi_2}>0$, hence $\xi_1\cup\xi_2\cup\xi_3$
cannot be made transverse to the foliation by a small deformation.
\end{exam}

\section{Stability of minimal interval exchange transformations with poor restrictions}
\label{poor->stable}

\begin{defi}
Let $\mathscr R$ be a restriction space of a minimal interval exchange transformation $T_{\pi,\mathbf a}$. We say that
$T_{\pi,\mathbf a}$ is \emph{$\mathscr R$-stably minimal} if there exists an open neighborhood $U$ of $\mathbf a$ in
$\Delta^{n-1}$ such that $T_{\pi,\mathbf a'}$ is minimal whenever $\mathbf a'\in U$ and $\mathscr R(T_{\pi,\mathbf a'})=\mathscr R$.
\end{defi}

\begin{theo}\label{stabilityofminimal}
Let $T_{\pi,\mathbf a}$ be a minimal and uniquely ergodic interval exchange transformation such
that the full restriction space $\mathscr R(T_{\pi,\mathbf a})$ is poor. 
Let $\mathscr R_0$ be a subspace of $\mathscr R(T_{\pi,\mathbf a})$ generated
by all restrictions realized by $T_{\pi,\mathbf a}$ (see Definition~\ref{realized}).
Then the transformation $T_{\pi,\mathbf a}$
is $\mathscr R$-stably minimal for
any restriction space $\mathscr R$ such that $\mathscr R_0\subset\mathscr R\subset\mathscr R(T_{\pi,\mathbf a})$.
\end{theo}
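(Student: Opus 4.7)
My plan is to fix a separating cycle $s\in H_1(\Sigma_{\pi,\mathbf a};\mathbb Z)$ for $T_{\pi,\mathbf a}$ (which exists by Proposition~\ref{obvious} and the hypothesis that $\mathscr R(T_{\pi,\mathbf a})$ is poor), construct a closed transversal $\xi$ of $\mathcal F_{\pi,\mathbf a}$ whose homology class is a non-zero integer multiple of $s$ and which intersects every separatrix of $\mathcal F_{\pi,\mathbf a}$, and then transplant $\xi$ to $\Sigma_{\pi,\mathbf a'}$ for $\mathbf a'$ near $\mathbf a$. Proposition~\ref{prop-separ} applied to $\xi$ at the new parameters will then deliver minimality of $T_{\pi,\mathbf a'}$.

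First I would set up the continuity framework. Since $\pi$ is fixed, for $\mathbf a'$ in a small neighborhood of $\mathbf a$ the surfaces $\Sigma_{\pi,\mathbf a'}$ can be identified with $\Sigma_{\pi,\mathbf a}$ in such a way that $\omega_{\pi,\mathbf a'}$ depends continuously on $\mathbf a'$. Transversality of $\xi$ means $\omega_{\pi,\mathbf a}|_\xi$ has a definite sign, which is an open condition, so $\xi$ remains transverse to $\mathcal F_{\pi,\mathbf a'}$. The inclusion $\mathscr R\subset\mathscr R(T_{\pi,\mathbf a})$ forces $[\xi]\frown r=0$ for every $r\in\mathscr R\cap H_1(\Sigma_{\pi,\mathbf a'};\mathbb R)$, while $[\xi]\frown c_{\pi,\mathbf a'}\ne 0$ by continuity of the asymptotic cycle in $\mathbf a$. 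Thus $[\xi]$ is automatically separating for $T_{\pi,\mathbf a'}$.

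The heart of the argument is to produce such a $\xi$ meeting every separatrix of $\mathcal F_{\pi,\mathbf a}$. Unique ergodicity enters through Proposition~\ref{asymptoticcycle}: the uniform approximation $c_{\pi,\mathbf a,k}(x)/k\to c_{\pi,\mathbf a}$ allows one to close long $T_{\pi,\mathbf a}$-orbit segments with short horizontal arcs into closed transversals whose homology classes approximate multiples of $c_{\pi,\mathbf a}$. Since $s\frown c_{\pi,\mathbf a}\ne 0$, a suitably chosen such curve, combined with a finite chain of elementary realizations of the arc restrictions generating $\mathscr R_0$ (of which there are finitely many, since $T_{\pi,\mathbf a}$ is minimal, so no loop restrictions are realized), can be arranged to represent a non-zero integer multiple of $s$. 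Regular separatrices of $\mathcal F_{\pi,\mathbf a}$ are dense by minimality and thus automatically meet $\xi$ in finite time; the finitely many saddle-connection separatrices are met after a homologically neutral local modification of $\xi$ near each of them.

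Finally, for $\mathbf a'$ close to $\mathbf a$ with $\mathscr R(T_{\pi,\mathbf a'})=\mathscr R\supset\mathscr R_0$, each saddle connection of $\mathcal F_{\pi,\mathbf a}$ represents a restriction in $\mathscr R_0\subset\mathscr R(T_{\pi,\mathbf a'})$, so by Proposition~\ref{zorich-general} it persists and its transverse intersection with $\xi$ survives. Every regular separatrix of $\mathcal F_{\pi,\mathbf a}$ meets $\xi$ at some finite arc length, hence by continuous dependence of leaves on parameters up to any finite time, the corresponding separatrix of $\mathcal F_{\pi,\mathbf a'}$ meets $\xi$ at a nearby point, even if the perturbed separatrix happens to become part of a new saddle connection, since the intersection with $\xi$ occurs first. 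Consequently $\xi$ meets every separatrix of $\mathcal F_{\pi,\mathbf a'}$, and Proposition~\ref{prop-separ} yields minimality of $T_{\pi,\mathbf a'}$. The main obstacle is the construction step: producing a closed transversal representing a non-zero multiple of a prescribed separating cycle that is also guaranteed to meet every saddle connection requires combining Zorich's uniform equidistribution with a carefully controlled homological surgery near each realized saddle connection.
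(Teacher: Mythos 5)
Your overall strategy (build a closed transversal $\xi$ representing a non-zero multiple of a separating cycle, arrange that it meets every separatrix, and then carry it over to nearby parameters and apply Proposition~\ref{prop-separ}) is essentially the alternative route the authors themselves only sketch in the remark following Conjecture~\ref{conj1}; it is not the proof given in the paper, which instead reduces minimality to the non-realization of new simple restrictions via Imanishi's theorem and then runs a counting argument: $|\widetilde s\frown\widehat r|$ is uniformly bounded over simple restrictions (Lemma~\ref{arc-restriction-lem}), while by unique ergodicity and Proposition~\ref{asymptoticcycle} the pairings $s\frown c_{\pi,\mathbf a',j}(x)$ stay above that bound for all $j\geqslant k$ (Lemmas~\ref{definitionofU} and~\ref{j>=k}), which forbids any newly realized restriction. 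The problem is that the step you yourself flag as ``the main obstacle'' is not an obstacle you have overcome but a genuine gap, and the specific construction you propose for it cannot work. Closed transversals obtained by closing long orbit segments represent the classes $c_{\pi,\mathbf a,k}(x)$, which by Proposition~\ref{asymptoticcycle} are asymptotically $k\,c_{\pi,\mathbf a}$; if such a class equalled $m\,s$ you would get $0=s\frown(ms)\approx k\,(s\frown c_{\pi,\mathbf a})\ne0$, a contradiction for large $k$ (and there is no reason for an exact coincidence at small $k$ either). Correcting by ``elementary realizations of the arc restrictions generating $\mathscr R_0$'' does not help: arc restrictions are relative cycles, their realizations lie \emph{inside} leaves and cannot be concatenated with a transversal so as to stay transverse, and even homologically there is no reason a multiple of $s$ lies in the set of classes so obtainable. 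Likewise, a ``homologically neutral local modification'' cannot force a closed transversal to cross a prescribed saddle connection: whether a transversal in a given class meets all leaves is a global question, and its affirmative answer under your hypotheses is precisely the content of the Minsky--Weiss theorem~\cite{mw}, which the paper invokes only for Theorem~\ref{uniquelyergodictheo}.

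So to salvage your route you would have to import \cite{mw} (or an equivalent result) to produce a closed transversal representing $s$ (or $-s$) and meeting all regular leaves, and then still address two points you treat too quickly: (i) meeting all regular leaves is not literally the hypothesis of Proposition~\ref{prop-separ}, which asks for intersections with all separatrices, so you need an extra argument for separatrices contained in saddle connections (compare how Example~\ref{exam-seapr} handles the two remaining separatrices); and (ii) the persistence argument at $\mathbf a'$ is fine for the finitely many separatrices that meet $\xi$ at bounded time, but it silently uses that no \emph{new} saddle connections trap a separatrix before its first crossing with $\xi$, which again needs the quantitative control the paper gets from Lemmas~\ref{definitionofU} and~\ref{j>=k}. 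As it stands, the proposal identifies a plausible alternative architecture but leaves its key existence statement unproved and supported by an argument that is actually false.
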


\begin{proof}
If $\mathscr R=0$, then the assertion follows from Keane's result~\cite{k75}. In what follows we assume
$0\ne\mathscr R\subset\mathscr R(T_{\pi,\mathbf a})$, and $\mathscr R=\mathscr R(T_{\pi,\mathbf a})\ne0$ if $T_{\pi,\mathbf a}$ does
not satisfy Keane's condition.

By Imanishi's theorem~\cite{i79} the foliation $\mathcal F_{\pi,\mathbf a}$, and hence the transformation $T_{\pi,\mathbf a}$
is not minimal if and only if some union of realizations of restrictions that is contained in finitely many leaves of $\mathcal F_{\pi,\mathbf a}$
cut the surface $M_{\pi,\mathbf a}$~into two non-trivial pieces. Keane's condition is equivalent to the absence of
any realized restrictions.

Thus, in order to prove that $T_{\pi,\mathbf a'}$ is minimal it suffices to show that $T_{\pi,\mathbf a'}$ does not
realize any simple restriction that is not already realized by $T_{\pi,\mathbf a}$.

Let $s\in H_1(\Sigma_{\pi,\mathbf a};\mathbb Z)$ be a separating cycle. Without loss of generality we can assume $s\frown c_{\pi,\mathbf a}>0$.
Take an arbitrary preimage $\widetilde s\in H_1(\Sigma_{\pi,\mathbf a}\setminus S_{\pi,\mathbf a};\mathbb Z)$ under
the natural projection $H_1(\Sigma_{\pi,\mathbf a}\setminus S_{\pi,\mathbf a};\mathbb Z)\rightarrow H_1(\Sigma_{\pi,\mathbf a};\mathbb Z)$.
Note that the $\frown$ product is a well defined pairing $H_1(\Sigma_{\pi,\mathbf a}\setminus S_{\pi,\mathbf a};\mathbb Z)\times
H_1(\Sigma_{\pi,\mathbf a},S_{\pi,\mathbf a};\mathbb Z)\rightarrow\mathbb Z$.

\begin{lemm}\label{arc-restriction-lem}
There exists a constant $C$ such that $|\widetilde s\frown r|<C$ for any simple restriction $r\in\mathscr R(T_{\pi,\mathbf a})$.
\end{lemm}

\begin{proof}
If $r$ is an absolute cycle, then $s\frown r=0$ by Definition~\ref{separating-def}.

If $r_1$ and $r_2$ are two arc restrictions such that $\partial r_1=\partial r_2$, then $r_1-r_2\in H_1(\Sigma_{\pi,\mathbf a};\mathbb Z)$
is an absolute cycle, so, we have $(r_1-r_2)\frown\widetilde s=(r_1-r_2)\frown s=0$. Therefore,
$r_1\frown\widetilde s=r_2\frown\widetilde s$. The assertion of the lemma now follows from the fact
that there are only finitely many different possibilities for $\partial r$ if $r$ is an arc restriction.
\end{proof}

For $x,y\in S_{\pi,\mathbf a}$ denote by $r_{xy}$ the relative $1$-cycle presented by the
arc $[x,y]\times\{1\}$ if $x<y$ and $[y,x]\times\{1\}$ otherwise directed from $x$ to $y$.
If $r$ is an arc restriction with $\partial r=y-x$ we denote by $\widehat r$
the absolute cycle $r-r_{xy}\in H_1(\Sigma_{\pi,\mathbf a};\mathbb Z)$.
If $r$ is a loop restriction we set $\widehat r=r$.

It follows from Proposition~\ref{asymptoticcycle} and Lemma~\ref{arc-restriction-lem} that, for large enough $k$,
we have
\begin{equation}\label{sfrownr}
s\frown c_{\pi,\mathbf a,j}(x)>s\frown \widehat r\end{equation}
whenever $j\in[k,2k]\cap\mathbb Z$, $x\in[0,1)$, and $r$ is a simple restriction. We fix such a large $k$ from now on.

Denote $\mathcal C_{\pi,\mathbf a,j}=\{c_{\pi,\mathbf a,j}(x)\;;\;x\in[0,1)\}$.

\begin{lemm}\label{definitionofU}
There exists a neighborhood $U$ of $\mathbf a$ in $\Delta^{n-1}$ such that 
whenever $\mathbf a'\in U$ and $\mathscr R(T_{\pi,\mathbf a'})=\mathscr R$ we have:
\begin{enumerate}
\item
$\mathcal C_{\pi,\mathbf a',j}=\mathcal C_{\pi,\mathbf a,j}$ for all $j\in[1,2k]\cap\mathbb Z$;
\item
$T_{\pi,\mathbf a'}$ does not realize any restriction of length $\leqslant 2k$ that
is not realized by $T_{\pi,\mathbf a}$.
\end{enumerate}
\end{lemm}

\begin{proof}
One can see that, for any $j$, the finite set $\mathcal C_{\pi,\mathbf a,j}$ is defined by the permutation $\pi$
and the relative positions of points from $\bigcup_{i=0}^{j-1}(T_{\pi,\mathbf a})^i(S_{\pi,\mathbf a})$
(we abuse notation here slightly by thinking of $S_{\pi,\mathbf a}$ as a subset of $[0,1)$).
If $\mathbf a$ is perturbed slightly, then the order of points in this union does not change provided
that all coincidences, if any,  between these points are preserved.

Coincidences may occur only if Keane's condition is not satisfied.
Indeed, an equation $(T_{\pi,\mathbf a})^i(x)=y$ with $x,y\in S_{\pi,\mathbf a}$ means that an arc
restriction is realized in the singular leaf passing through $x$.

If $T_{\pi,\mathbf a'}$ satisfies
all restrictions realized by $T_{\pi,\mathbf a}$ and
$\mathbf a'$ is close enough to $\mathbf a$, then $(T_{\pi,\mathbf a'})^i(x)=y$ holds true, too.

If $\mathbf a'$ is sufficiently close to $\mathbf a$ then no new coincidences occur in
$\bigcup_{i=0}^{2k}(T_{\pi,\mathbf a'})^i(S_{\pi,\mathbf a'})$ compared to $\bigcup_{i=0}^{2k}(T_{\pi,\mathbf a})^i(S_{\pi,\mathbf a})$,
which implies Condition~(2) from the Lemma assertion.
\end{proof}

We fix $U$ as in Lemma~\ref{definitionofU}. Let $\mathbf a'\in U$, $\mathscr R(T_{\pi,\mathbf a'})=\mathscr R$.

\begin{lemm}\label{j>=k}
For any simple restriction $r$ and any $j\geqslant k$ we have
\begin{equation}
s\frown c_{\pi,\mathbf a',j}(x)>s\frown \widehat r.
\end{equation}
\end{lemm}
\begin{proof}
We apply induction in $[j/k]$, where $[x]$ stands for the integral part of $x$.
For $k\leqslant j\leqslant2k$ the assertion follows from Lemma~\ref{definitionofU} and inequality~\eqref{sfrownr}.

The induction step is obtained
by applying the following relation
$$c_{\pi,\mathbf a',j+k}(x)=c_{\pi,\mathbf a',j}(x)+c_{\pi,\mathbf a',k}\bigl((T_{\pi,\mathbf a'})^j(x)\bigr)\in\mathcal C_{\pi,\mathbf a',j}+\mathcal C_{\pi,\mathbf a',k}.$$
The inequality
$$s\frown c>\max_{\parbox{1.4 cm}{\tiny $r$ is a simple restriction}}s\frown\widehat r$$
holds for any $c\in C_{\pi,\mathbf a',j}$ by the induction hypothesis and for any $c\in C_{\pi,\mathbf a',k}$
by the induction base. Therefore, it also holds
for any $c\in\mathcal C_{\pi,\mathbf a',j+k}\subset\mathcal C_{\pi,\mathbf a',j}+\mathcal C_{\pi,\mathbf a',k}$.
\end{proof}

We are now ready to conclude the proof of Theorem~\ref{stabilityofminimal}.

Suppose that $T_{\pi,\mathbf a'}$ realizes some simple restriction that is not realized by $T_{\pi,\mathbf a}$.
Let $r$ be such a restriction with minimal possible length, and $j$ the length of $r$.
By the choice of $U$ we must have $j>2k$.

There must exist $x,y\in S_{\pi,\mathbf a'}$ (possibly $x=y$) such that $(T_{\pi,\mathbf a'})^j(x)=y$ holds,
and $r$ is presented by an arc (or loop if $x=y$) contained in the singular fiber through $x$.

We have $c_{\pi,\mathbf a',j}(x)=\widehat r$, which contradicts Lemma~\ref{j>=k}.

Therefore, $T_{\pi,\mathbf a'}$ does not realize any simple restriction that is not realized by $T_{\pi,\mathbf a}$ and, hence,
is minimal.
\end{proof}

Barak Weiss drew our attention to the fact that the results of paper~\cite{mw} allow us to prove
an analogue of the Masur--Veech theorem \cite{m82,v82} (Keane's conjecture~\cite{k77})
in our settings. The next theorem is essentially due to him though we modified
the formulation and the proof from his original suggestion in order to keep
the style of the paper uniform.

\begin{theo}\label{uniquelyergodictheo}
Let $T_{\pi,\mathbf a}$ be a minimal and uniquely ergodic interval exchange transformation such
that the full restriction space $\mathscr R(T_{\pi,\mathbf a})$ is poor. 
Let $\mathscr R_0$ be a subspace of $\mathscr R(T_{\pi,\mathbf a})$ generated
by all restrictions realized by $T_{\pi,\mathbf a}$ (see Definition~\ref{realized}),
and $\mathscr R$ is a restriction space for $T_{\pi,\mathbf a}$ such that $\mathscr R_0\subset\mathscr R\subset\mathscr R(T_{\pi,\mathbf a})$.
Then there exists an open neighborhood $V$ of $\mathbf a$ in $\{\mathbf b\in\Delta^{n-1}\;;\;\mathscr R(T_{\pi,\mathbf b})\supset\mathscr R\}$
such that $T_{\pi,\mathbf a'}$ is uniquely ergodic for almost all $\mathbf a'\in V$
with respect to the Lebesgue measure.
\end{theo}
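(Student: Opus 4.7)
The plan is to combine the stability result of Theorem~\ref{stabilityofminimal} with a Masur-type criterion for unique ergodicity applied to the sub-locus of the moduli space cut out by the restriction space $\mathscr R$. The reference~[mw] is then invoked for the quantitative non-divergence estimate that makes this work.

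First, by Theorem~\ref{stabilityofminimal}, we can pick an open neighborhood $V$ of $\mathbf a$ in $\{\mathbf b\in\Delta^{n-1}\;;\;\mathscr R(T_{\pi,\mathbf b})\supset\mathscr R\}$ on which every $T_{\pi,\mathbf a'}$ is minimal, so that only unique ergodicity needs to be proved for almost every $\mathbf a'\in V$. Next, via the suspension construction of Subsection~\ref{prelimsec}, we identify $V$ (after fixing a compatible imaginary part) with an open subset of the stratum $\mathcal M(\Sigma,S,m)$ so that each $\mathbf a'\in V$ corresponds to an abelian differential $\omega'$ whose vertical foliation has first return map $T_{\pi,\mathbf a'}$ on the distinguished transversal. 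The condition $\mathscr R(T_{\pi,\mathbf a'})\supset\mathscr R$ then translates exactly into the requirement that the relative cohomology class $[\mathrm{Re}(\omega')]$ belong to a fixed rational linear subspace $L\subset H^1(\Sigma,S;\mathbb R)$ of codimension $\dim\mathscr R$.

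Since the Teichmüller geodesic flow $g_t=\mathrm{diag}(e^t,e^{-t})$ scales $\mathrm{Re}(\omega')$ uniformly, the locus $\mathcal L=\{\omega'\;;\;[\mathrm{Re}(\omega')]\in L\}$ is $g_t$-invariant, and it carries a natural Lebesgue measure in period coordinates whose restriction to the real-part slice $V$ agrees with the Lebesgue measure on the affine subspace of $\Delta^{n-1}$ defined by $\mathscr R$. The main step is then to apply the quantitative non-divergence theorem of~[mw], which is formulated exactly for integer-linear affine subloci of a stratum, to conclude that for Lebesgue-almost every $\omega'\in\mathcal L$ the forward orbit $\{g_t\omega'\;;\;t\geqslant0\}$ returns infinitely often to a compact subset of the stratum. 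By Masur's criterion~\cite{m82}, this recurrence implies that the vertical foliation of $\omega'$ is uniquely ergodic, which is equivalent to the unique ergodicity of $T_{\pi,\mathbf a'}$.

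The main obstacle is checking that the hypotheses of~[mw] are satisfied on this particular slice. This amounts to verifying that $\mathcal L$ is not concentrated in a neighborhood of the boundary of the stratum in a way that would force divergence of $g_t$-orbits; the hypothesis that $\mathscr R(T_{\pi,\mathbf a})$ is poor is exactly what prevents this, via the existence of a separating cycle $s$ transverse to the asymptotic cycle $c_{\pi,\mathbf a}$ (Proposition~\ref{obvious} and Definition~\ref{separating-def}), which supplies a non-vanishing transversal period bounded away from zero and thereby controls the systoles along the slice. Once this quantitative non-divergence has been secured in the restricted setting, the deduction of unique ergodicity from Masur's criterion is standard.
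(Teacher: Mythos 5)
Your overall strategy---suspend, view the restriction locus inside a stratum, and deduce unique ergodicity for almost every point from recurrence of the Teichm\"uller geodesic plus Masur's criterion---is in the right spirit, but the step where all the work happens is not justified, and it is not what \cite{mw} provides. Minsky--Weiss do not prove a quantitative non-divergence theorem for an arbitrary integer-linear sublocus of a stratum; their non-divergence results concern horocycle arcs (one-parameter families), and their applications to interval exchanges proceed line-by-line in a fixed ``positive'' direction. A statement of the kind you invoke (``for a.e.\ $\omega'\in\mathcal L$ the $g_t$-orbit is recurrent'') cannot hold for a general rational slice: on a \emph{rich} slice (Example~\ref{rauzy}, or the Novikov-type restrictions) almost every point is non-minimal and the geodesic diverges, so poorness must enter through a concrete mechanism. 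Your proposed mechanism---that the separating cycle ``supplies a non-vanishing transversal period bounded away from zero and thereby controls the systoles along the slice''---is not an argument: a single non-vanishing period bounds no systole, and pointwise recurrence over unbounded time is exactly what one cannot extract this way. The paper's actual route is different: from the separating cycle one builds $s''$ with $[\gamma]\frown s''=0$, $s''\frown r=0$ for all $r\in\mathscr R$ and $s''\frown c_{\pi,\mathbf a}>0$; its Poincar\'e dual $\mathbf b$ is a tangent direction to the slice, and \cite[Theorem~1.1]{mw}---the \emph{representation} theorem, whose positivity hypothesis is checked using the assumed unique ergodicity of $T_{\pi,\mathbf a}$ (the cone of asymptotic cycles of invariant measures is a single ray)---realizes $\mathbf b$ as a closed form $\sigma$ with $\omega_{\pi,\mathbf a}+\mathbbm i\sigma$ holomorphic. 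Then Kerckhoff--Masur--Smillie \cite{kms} gives unique ergodicity for a.e.\ angle $\phi$ of the foliations defined by $\re\bigl(e^{\mathbbm i\phi}(\omega_{\pi,\mathbf a'}+\mathbbm i\sigma)\bigr)$, whose first return maps are exactly $T_{\pi,\mathbf a'+\tan\phi\cdot\mathbf b}$, and Fubini along lines in direction $\mathbf b$ inside the slice concludes. Notice that your proposal never uses the unique ergodicity hypothesis on $T_{\pi,\mathbf a}$ at all, which is a symptom of the missing step.

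There are two further gaps. First, your identification of $V$ with an open subset of a stratum tacitly assumes Keane's condition: if $\mathscr R_0\ne0$, then by Proposition~\ref{zorich-general} every surface over the slice has vertical saddle connections, so every $g_t$-orbit diverges in the stratum and neither Masur's criterion \cite{m82} nor any non-divergence statement applies; the paper first collapses these saddle connections and passes to $T_{\widehat\pi,\widehat{\mathbf a}}$ to reduce to the Keane case, a reduction absent from your proposal. Second, even granting an a.e.\ statement on $\mathcal L$, your set $V$ sits in a fixed-imaginary-part slice of measure zero in $\mathcal L$; ``restriction of the measure to the real-part slice'' does not literally make sense, and one must instead use the product structure of the period-coordinate measure together with the fact that unique ergodicity of the vertical foliation depends only on the real part, plus a measurability check for the uniquely ergodic locus (the paper gets this via Rauzy induction), before Fubini can be run. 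These last points are fixable, but the first paragraph's gap is essential.
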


\begin{proof}
First we consider the case when $T_{\pi,\mathbf a}$ satisfies Keane's condition, i.e.\ when $\mathcal F_{\pi,\mathbf a}$ has
no saddle connections. Denote $P=\{\mathbf b\in\Delta^{n-1}\;;\;\mathscr R(T_{\pi,\mathbf b})\supset\mathscr R\}$.
$P$ is a polyhedron in an affine subspace of $\mathbb R^n$ with the latter being
identified with $H^1(\Sigma_{\pi,\mathbf a},S_{\pi,\mathbf a};\mathbb R)$ (see Section~\ref{rich-and-poor}),
and $\mathbf a$ is an interior point of $P$.

For a neighborhood $U\subset P$ of $\mathbf a$ we can identify all surfaces $\Sigma_{\pi,\mathbf a'}$, $\mathbf a'\in U$,
with a fixed surface $\Sigma$ so that $S_{\pi,\mathbf a'}$ is identified with a fixed subset $S\subset\Sigma$,
and the form $\omega_{\pi,\mathbf a'}$ viewed as a $1$-form on $\Sigma$
depends on $\mathbf a'$ continuously (in $C^1$ topology, say). We may also assume
that $\omega_{\pi,\mathbf a'}$ does not depend on $\mathbf a'$ in a small neighborhood of
the subset $S$, and the image of the oriented interval $[0,1]\times\{1/2\}$ under identifications~\eqref{identifications} in $S_{\pi,\mathbf a'}$
is identified with a fixed oriented curve $\gamma$ in $S$ for all $\mathbf a'\in U$. It defines a relative homology class
$[\gamma]\in H_1(\Sigma,S;\mathbb Z)$.

Let $s\in H_1(\Sigma_{\pi,\mathbf a};\mathbb Z)$ be a separating cycle for $T_{\pi,\mathbf a}$. It has a preimage $s'$
in $H_1(\Sigma\setminus S;\mathbb Z)$ such that $s'\frown r=0$ for all restrictions $r\in\mathscr R$.
Denote by $c'_{\pi,\mathbf a}$ the homology class from $H_1(\Sigma\setminus S;\mathbb R)$
that is Poincar\'e dual to the relative cohomology class $[\omega_{\pi,\mathbf a}]\in H^1(\Sigma,S;\mathbb R)$.
(The image of~$c'_{\pi,\mathbf a}$ in $H_1(\Sigma;\mathbb R)$ under the projection induced by
the natural inclusion $\Sigma\setminus S\hookrightarrow\Sigma$ is the asymptotic cycle $c_{\pi,\mathbf a}$.)

Let $s''=s'-([\gamma]\frown s')c'_{\pi,\mathbf a}$. We will have $[\gamma]\frown s''=0$, and still $s''\frown r=0$ for any
restriction $r\in\mathscr R$, and $s''\frown c_{\pi,\mathbf a}\ne0$. Denote by $\mathbf b\in H^1(\Sigma,S;\mathbb R)$
the cohomology class that is Poincar\'e dual to $s''$. It can be viewed as a tangent vector to $P$ (at $\mathbf a$ or
any other point as $P$ is a polyhedron in an affine space)
since $\mathbf b([\gamma])=0$ and $\mathbf b(r)=0$ for any $r\in\mathscr R$. We also have
$\mathbf b(c_{\pi,\mathbf a})\ne0$. By changing the sign of $\mathbf b$ we may assume
$\mathbf b(c_{\pi,\mathbf a})>0$.

It follows from \cite[Theorem~1.1]{mw} that the cohomology class $\mathbf b$ can be
represented by a closed $1$-form~$\sigma$ on $\Sigma$
such that $\omega_{\pi,\mathbf a}+\mathbbm i\sigma$ is a holomorphic $1$-form
with respect to some Riemann surface structure on $\Sigma$ (this complex structure
need not agree with the smooth structure on $\Sigma$ at $S$). Since $\gamma$
is transverse to the foliation defined by $\omega_{\pi,\mathbf a}$, the $1$-form $\sigma$
can be chosen so that $\sigma|_\gamma$ vanishes at the endpoints of $\gamma$.

The $1$-form $\omega_{\pi,\mathbf a}+\mathbbm i\sigma$ defines a locally Euclidean metric
$ds^2=|\omega_{\pi,\mathbf a}+\mathbbm i\sigma|^2$ on $\Sigma\setminus S$. With respect to this metric, the transversal $\gamma$
intersects the leaves of $\mathcal F_{\pi,\mathbf a}$ at an angle bounded from
below by a positive constant $\delta$. Since $\omega_{\pi,\mathbf a'}$ depends on $\mathbf a'$
continuously and coincides with $\omega_{\pi,\mathbf a}$ in a small neighborhood of $S$ when $\mathbf a'$
is close enough to $\mathbf a$, there is an open neighborhood $V\subset U$ of $\mathbf a$ such
that for all $\mathbf a'\in V$ the $1$-form $\omega_{\pi,\mathbf a'}+\mathbbm i\sigma$ is
a holomorphic $1$-form for some complex structure on $S$ (depending on $\mathbf a'$), and, with respect to
the locally Euclidean metric $|\omega_{\pi,\mathbf a'}+\mathbbm i\sigma|^2$, the curve $\gamma$ 
remains transverse to
the leaves of~$\mathcal F_{\pi,\mathbf a'}$ and intersects them at an angle bounded from below by some $\delta>0$.
This $\delta$ can
be chosen to be independent of $\mathbf a'$, though this is actually not important.

We have that $\gamma$ is transverse to the leaves of the foliation defined by the
$1$-form
$$\zeta_{\mathbf a',\phi}=\re\big(e^{\mathbbm i\phi}(\omega_{\pi,\mathbf a'}+\mathbbm i\sigma)\big)$$
if $\mathbf a'\in V$ and $|\phi|<\delta$. If, additionally, $\mathbf a'+\tan\phi\cdot\mathbf b$
lies in the interior of $P$, then the first return map of the foliation defined by $\zeta_{\mathbf a',\phi}$ on $\gamma$
is, after an appropriate choice of the coordinate on $\gamma$, an interval exchange transformation $T_{\pi,\mathbf a'+\tan\phi\cdot\mathbf b}$.

By Kerckhoff--Masur--Smillie~\cite{kms}, for a fixed $\mathbf a'\in V$, the foliation defined by $\zeta_{\mathbf a',\phi}$
is uniquely ergodic for almost all $\phi$. It means that if $\ell$ is a straight line in $\mathbf R^n$ having direction $\mathbf b$ and passing
through a point in $V$, then for almost any $\mathbf a'\in\ell\cap V$ the interval exchange transformation $T_{\pi,\mathbf a'}$
is ergodic. Fubini's theorem implies that $T_{\pi,\mathbf a'}$ is uniquely ergodic for almost all $\mathbf a'\in V$
provided that the set of all such $\mathbf a'$ is measureable.

By using the Rauzy induction~\cite{rauzy79} one can show that the set of all $\mathbf a\in\Delta^{n-1}$ such that $T_{\pi,\mathbf a}$ is uniquely ergodic
is a Borel set. Therefore, so is the intersection of this set with $P$.

We have completed the proof under the Keane's condition assumption.

Now we reduce the general case to the one when Keane's condition is satisfied.
We keep using notation $P=\{\mathbf b\in\Delta^{n-1}\;;\;\mathscr R(T_{\pi,\mathbf b})\supset\mathscr R\}$.
There is a small neighborhood $U\subset P$ of $\mathbf a$ such that whenever $\mathbf b\in U$
the interval exchange transformation $T_{\pi,\mathbf b}$ realizes any restriction realized by $T_{\pi,\mathbf a}$.

For $\mathbf b\in U$, denote by $\widetilde\Sigma_{\pi,\mathbf b}$ the singular surface obtained from $\Sigma_{\pi,\mathbf b}$
by collapsing to a point each saddle connection realizing a restriction from $\mathscr R_0$.
In general,
$\widetilde\Sigma_{\pi,\mathbf b}$ may have singularities. Namely, if~$T_{\pi,\mathbf a}$ realizes some loop restrictions,
then $\widetilde\Sigma_{\pi,\mathbf b}$, $\mathbf b\in U$, will have finitely many points with a neighborhood
that has the form of a wedge sum of several open discs. Thus, $\widetilde\Sigma_{\pi,\mathbf b}$ can be obtained
from a surface, which we denote by $\widehat\Sigma_{\pi,\mathbf b}$, by making finitely many identifications
of points.

Thus, we have two projections:
$$\Sigma_{\pi,\mathbf b}\stackrel p\longrightarrow\widetilde\Sigma_{\pi,\mathbf b}\stackrel{\widehat p}\longleftarrow\widehat\Sigma_{\pi,\mathbf b},$$
of two smooth surfaces to a singular one. The first projection, $p$, collapses all the saddle connections, and the second, $\widehat p$,
is one-to-one outside of a finite subset.

We denote by $\widetilde S_{\pi,\mathbf b}\subset\widetilde\Sigma_{\pi,\mathbf b}$ the image of
$S_{\pi,\mathbf b}$ under the projection $p$,
and by $\widehat S_{\pi,\mathbf b}\subset\widehat\Sigma_{\pi,\mathbf b}$ the preimage of
$\widetilde S_{\pi,\mathbf b}$ under $\widehat p$.
The kernel of the map $p_*:H_1(\Sigma_{\pi,\mathbf b},S_{\pi,\mathbf b};\mathbb R)\rightarrow
H_1(\widetilde\Sigma_{\pi,\mathbf b},\widetilde S_{\pi,\mathbf b};\mathbb R)$ induced by the projection~$p$ is clearly $\mathscr R_0$,
and the projection $\widehat p$
induces an isomorphism $\widehat p_*:H_1(\widehat \Sigma_{\pi,\mathbf b},\widehat S_{\pi,\mathbf b};\mathbb R)\rightarrow
H_1(\widetilde\Sigma_{\pi,\mathbf b},\widetilde S_{\pi,\mathbf b};\mathbb R)$. Thus we have
a natural epimorphism $\widehat p_*^{-1}\circ p_*:H_1(\Sigma_{\pi,\mathbf b},S_{\pi,\mathbf b};\mathbb R)\rightarrow
H_1(\widehat \Sigma_{\pi,\mathbf b},\widehat S_{\pi,\mathbf b};\mathbb R)$, which we denote by $\iota$,
whose kernel is $\mathscr R_0$.

There is an obvious way to transfer the $1$-form $\omega_{\pi,\mathbf b}$ from $\Sigma_{\pi,\mathbf b}$
to $\widehat\Sigma_{\pi,\mathbf b}$. The obtained $1$-form on $\widehat\Sigma_{\pi,\mathbf b}$ will
be denoted by $\widehat\omega_{\pi,\mathbf b}$, and the foliation it defines by $\widehat{\mathcal F}_{\pi,\mathbf b}$.
Clearly, $\widehat{\mathcal F}_{\pi,\mathbf b}$ is minimal if and only if so is $\mathcal F_{\pi,\mathbf b}$.

By construction the foliation $\widehat{\mathcal F}_{\pi,\mathbf a}$ has no saddle connections, so
it induces an interval exchange map that satisfies Keane's condition
at every transversal connecting two points from $\widehat S_{\pi,\mathbf a}$.
We choose such a transversal so that, additionally, it intersects all the leaves of $\widehat{\mathcal F}_{\pi,\mathbf a}$.
We can use this transversal for all $\mathbf b$ close enough to $\mathbf a$.
Let $T_{\widehat\pi,\widehat{\mathbf b}}$ be interval exchange map induced at this transversal
by $\widehat{\mathcal F}_{\pi,\mathbf b}$. We
can think of $\widehat\Sigma_{\pi,\mathbf b}$ being identified with $\Sigma_{\widehat\pi,\widehat{\mathbf b}}$.

For a cycle $c\in H_1(\Sigma_{\pi,\mathbf b},S_{\pi,\mathbf b};\mathbb Z)$, we obviously
have $\int_c\omega_{\pi,\mathbf b}=\int_{\iota(c)}\widehat\omega_{\pi,\mathbf b}$.
Therefore, the restriction space $\mathscr R(T_{\widehat\pi,\widehat{\mathbf b}})$
is naturally identified with $\mathscr R(T_{\pi,\mathbf b})/\mathscr R_0$.

The image $\iota(c)$ of an absolute cycle $c\in H_1(\Sigma_{\pi,\mathbf b};\mathbb R)$ is also
an absolute cycle, i.e. lies in $H_1(\widehat\Sigma_{\pi,\mathbf b};\mathbb R)$, if and only
if 
\begin{equation}\label{absolute}
c\frown r=0\text{ for any }r\in\mathscr R_0\cap H_1(\Sigma_{\pi,\mathbf b};\mathbb R).\end{equation}
This holds, in particular, for the asymptotic cycle $c_{\pi,\mathbf b}$ of $T_{\pi,\mathbf b}$,
and one can see that $\iota(c_{\pi,\mathbf b})$ is the asymptotic cycle of $T_{\widehat\pi,\widehat{\mathbf b}}$.
Condition~\eqref{absolute} also holds for any separating cycle $s$ of $T_{\pi,\mathbf b}$,
and the image $\iota(s)$ is then a separating cycle for $T_{\widehat\pi,\widehat{\mathbf b}}$.

Therefore, if $\mathbf b=\mathbf a$, then the full restriction space $\mathscr R(T_{\widehat\pi,\widehat{\mathbf b}})=\iota(\mathscr R)$
is poor. Varying $\mathbf b$ in a small neighborhood of $\mathbf a$ so that $\mathscr R(T_{\pi,\mathbf b})\supset\mathscr R_0$ holds
is equivalent to varying $\widehat{\mathbf b}$ in a small neighborhood of $\widehat{\mathbf a}$.
Thus, the passage from $T_{\pi,\mathbf a}$ to $T_{\widehat\pi,\widehat{\mathbf a}}$ described above
reduces the general case to the case when Keane's condition is satisfied.
\end{proof}

The assumption on the unique ergodicity of $T_{\pi,\mathbf a}$ in Theorems~\ref{stabilityofminimal} and \ref{uniquelyergodictheo}
can be weakened by requiring only that there exists a cycle $s\in H_1(\Sigma_{\pi,\mathbf a}\setminus S_{\pi,\mathbf a};\mathbb Z)$
such that 
\begin{enumerate}
\item
$s\frown r=0$ for all $r\in\mathscr R(T_{\pi,\mathbf a})$ and
\item
$s\frown c>0$ for any cycle $c$ that can be obtained as the limit in~\eqref{limit} for some $x$.
\end{enumerate}
This is a `weak' version of a separating cycle.
Due to Minsky--Weiss~\cite{mw}, condition~(2) holding either for $s$ or $-s$ can be shown
to be equivalent to the existence of a closed transversal representing $s$
and intersecting all regular leaves of $\mathcal F_{\pi,\mathbf a}$.

The proof of Theorem~\ref{uniquelyergodictheo} generalizes quite directly with the weaker assumption.
Our proof of Theorem~\ref{stabilityofminimal} does not generalize directly,
but there is another proof that does. Namely, instead of applying Proposition~\ref{asymptoticcycle}
one can show that the existence of a separating cycle (in the weak sense) is a stable property and that
it implies minimality (cf.~Proposition~\ref{prop-separ}).

The unique ergodicity requirement as well as the just mentioned weaker assumption
look artificial to us in the present context. We find it plausible that
such assumptions can be dropped completely in both theorems. So, we pose

\begin{conj}\label{conj1}
Theorems~\ref{stabilityofminimal} and \ref{uniquelyergodictheo} remain true without the assumption on the
unique ergodicity of $T_{\pi,\mathbf a}$.
\end{conj}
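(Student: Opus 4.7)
The authors have essentially reduced Conjecture~\ref{conj1} to the following assertion: if $T_{\pi,\mathbf a}$ is minimal and $\mathscr R(T_{\pi,\mathbf a})$ is poor, then $T_{\pi,\mathbf a}$ admits a \emph{weak separating cycle}, that is, a class $s\in H_1(\Sigma_{\pi,\mathbf a}\setminus S_{\pi,\mathbf a};\mathbb Z)$ with $s\frown r=0$ for every $r\in\mathscr R(T_{\pi,\mathbf a})$ and $s\frown c>0$ for every subsequential limit $c$ of $c_{\pi,\mathbf a,k}(x)/k$. Indeed, the adapted proof of Theorem~\ref{uniquelyergodictheo} sketched in the paragraph following its statement, together with the modified proof of Theorem~\ref{stabilityofminimal} suggested there (via stability of a weak separating cycle and a variant of Proposition~\ref{prop-separ}), goes through verbatim once such a cycle is produced. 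Thus I would focus entirely on constructing one.

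My approach is convex duality. Let $C\subset H_1(\Sigma_{\pi,\mathbf a};\mathbb R)$ be the closed convex cone generated by all subsequential limits above; this coincides with the positive span of the ergodic asymptotic cycles $c_\mu$ of $T_{\pi,\mathbf a}$, and by a standard averaging argument $c_{\pi,\mathbf a}$ lies in its relative interior. The Minsky--Weiss theorem~\cite{mw} identifies the interior of the dual cone $C^\vee\subset H_1(\Sigma_{\pi,\mathbf a}\setminus S_{\pi,\mathbf a};\mathbb R)$ with the set of classes realised by closed positive transversals of $\mathcal F_{\pi,\mathbf a}$ meeting every regular leaf. So the assertion becomes: $C^\vee\cap\mathscr R^\perp$ contains an integer point, where $\mathscr R^\perp=\{s\;;\;s\frown r=0\ \forall r\in\mathscr R(T_{\pi,\mathbf a})\}$. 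Since $C^\vee$ is an open cone and $\mathscr R^\perp$ is a rational subspace, it suffices to show $C^\vee\cap\mathscr R^\perp\neq\emptyset$ as real cones.

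As a starting point, Proposition~\ref{obvious} applied to the poor restriction space gives a classical separating cycle $s_0\in H_1(\Sigma_{\pi,\mathbf a};\mathbb Z)$ with $s_0\frown r=0$ for every absolute $r\in\mathscr R(T_{\pi,\mathbf a})\cap H_1(\Sigma_{\pi,\mathbf a};\mathbb R)$ and $s_0\frown c_{\pi,\mathbf a}\neq0$. The task splits into two sub-problems: (a)~lifting $s_0$ to $\widetilde s\in H_1(\Sigma_{\pi,\mathbf a}\setminus S_{\pi,\mathbf a};\mathbb Z)$ annihilating the relative part of $\mathscr R(T_{\pi,\mathbf a})$, a linear-algebraic adjustment by classes in the kernel of $H_1(\Sigma_{\pi,\mathbf a}\setminus S_{\pi,\mathbf a};\mathbb R)\to H_1(\Sigma_{\pi,\mathbf a};\mathbb R)$ that should be tractable via the long exact sequence of the pair $(\Sigma_{\pi,\mathbf a},S_{\pi,\mathbf a})$; and (b)~ensuring the positivity condition $\widetilde s\frown c_\mu>0$ for \emph{every} ergodic invariant measure $\mu$, not merely for Lebesgue measure. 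Part~(b) is the main obstacle: when $T_{\pi,\mathbf a}$ is not uniquely ergodic, distinct ergodic $c_\mu$ may lie on opposite sides of every hyperplane in $\mathscr R^\perp$, and a priori $\mathscr R^\perp$ need not meet the open cone $C^\vee$ at all. I would attempt a Farkas-style separation: assuming $\mathscr R^\perp\cap C^\vee=\emptyset$, one extracts an absolute $r\in\mathscr R\cap H_1(\Sigma_{\pi,\mathbf a};\mathbb R)$ whose Poincar\'e--Lefschetz image equals $-c$ for some $c\in C$, and the hope is that poorness of $\mathscr R\cap H_1(\Sigma_{\pi,\mathbf a};\mathbb R)$ combined with $c_{\pi,\mathbf a}\in\mathrm{relint}(C)$ yields a contradiction. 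Making this step work appears to require fine information on how non-ergodic invariant measures distribute with respect to~$\mathscr R$, for instance through Rauzy--Veech renormalisation or an extension of the methods in~\cite{mw}; this is, in my view, the hard core of the conjecture.
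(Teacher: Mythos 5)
The statement you are addressing is posed in the paper as a \emph{conjecture}: the authors give no proof of Conjecture~\ref{conj1}, only the observation (in the paragraph preceding it) that the unique ergodicity hypothesis in Theorems~\ref{stabilityofminimal} and~\ref{uniquelyergodictheo} can be replaced by the existence of a ``weak'' separating cycle $s$ with $s\frown r=0$ for all $r\in\mathscr R(T_{\pi,\mathbf a})$ and $s\frown c>0$ for every limit cycle $c$, and that with this weaker hypothesis the proofs go through (directly for Theorem~\ref{uniquelyergodictheo}, via stability of the weak separating cycle and a variant of Proposition~\ref{prop-separ} for Theorem~\ref{stabilityofminimal}). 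Your opening reduction therefore reproduces what the paper already says; the substance of the conjecture is exactly the step you defer, namely that minimality together with poorness of $\mathscr R(T_{\pi,\mathbf a})$ forces such a cycle to exist, i.e.\ in your notation that $C^\vee\cap\mathscr R^\perp\neq\emptyset$. Your proposal does not establish this: the Farkas-style separation is described as a ``hope,'' and you yourself identify part~(b) as the unresolved hard core. So what you have written is a plausible research plan, not a proof, and it cannot be credited as settling the statement.

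Two further points in the sketch need care even as a plan. First, the claim that $c_{\pi,\mathbf a}$ lies in the relative interior of the cone $C$ spanned by the ergodic asymptotic cycles is not automatic: the ergodic decomposition of Lebesgue measure may assign zero weight to some ergodic invariant measures (e.g.\ Lebesgue can itself be ergodic while other ergodic invariant measures exist), in which case $c_{\pi,\mathbf a}$ sits on a proper face of $C$ and the averaging argument you invoke fails. Second, Proposition~\ref{obvious} only yields $s_0\frown c_{\pi,\mathbf a}\neq0$, a statement about the single Lebesgue class; passing from this to simultaneous strict positivity against \emph{all} ergodic classes, while also annihilating the relative (arc) part of $\mathscr R(T_{\pi,\mathbf a})$ after lifting to $H_1(\Sigma_{\pi,\mathbf a}\setminus S_{\pi,\mathbf a};\mathbb Z)$, is precisely where new input (beyond \cite{mw} and linear algebra on the pair sequence) would be required. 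Until that gap is closed, the conjecture remains open.
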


We conclude the section by a remark about dimensions.
As follows from the definition, a restriction space that is a subspace of a poor restriction space is also poor,
however, richness or poorness of a restriction space is not strongly related to its dimension. The following statement is easy.

\begin{prop}\label{maximalpoor}
(i) Any minimal rich restriction space has dimension equal to the genus of the suspension surface.

(ii) Any maximal poor restriction space has codimension two.
\end{prop}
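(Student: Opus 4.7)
The plan is to dualize using the equivalent condition (2) in Subsection~\ref{rich-and-poor}: $\mathscr R$ is rich if and only if $\mathrm{Ann}(\mathscr R) \subset \mathscr V$ is isotropic with respect to $\langle\,,\rangle_\pi$. Since $\mathscr R \mapsto \mathrm{Ann}(\mathscr R)$ is an order-reversing bijection between subspaces of $\mathscr V^*$ and subspaces of $\mathscr V$ with $\dim \mathscr R + \dim \mathrm{Ann}(\mathscr R) = n$, a minimal rich $\mathscr R$ corresponds to a \emph{maximal} isotropic subspace of $\mathscr V$, while a maximal poor $\mathscr R$ corresponds to a \emph{minimal} non-isotropic subspace of $\mathscr V$. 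Both claims thus reduce to routine symplectic linear algebra on $\mathscr V$, which carries the bilinear form $\langle\,,\rangle_\pi$ with radical $\ker\varphi$ of dimension $n - 2g$ and symplectic quotient $\mathscr V/\ker\varphi$ of dimension $2g$.

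For part (i), I would show that any maximal isotropic subspace of $\mathscr V$ has dimension $n - g$. Any isotropic $I$ can be enlarged to $I + \ker\varphi$, which remains isotropic because $\ker\varphi$ pairs trivially with all of $\mathscr V$; so a maximal isotropic contains $\ker\varphi$. Its image in the symplectic quotient $\mathscr V/\ker\varphi$ is then a subspace isotropic with respect to the induced non-degenerate form, hence has dimension at most $g$, with equality precisely for Lagrangians. Taking the preimage in $\mathscr V$ of any Lagrangian of $\mathscr V/\ker\varphi$ produces a maximal isotropic subspace of dimension $(n - 2g) + g = n - g$, and so the corresponding minimal rich $\mathscr R$ has dimension $g$, which equals the genus of the suspension surface.

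For part (ii), I would show that any minimal non-isotropic subspace of $\mathscr V$ has dimension~$2$. Every line $\mathbb R u \subset \mathscr V$ is isotropic since $\langle u,u\rangle_\pi = 0$ by skew-symmetry, so the minimum is at least~$2$. Conversely, because $\mathscr V/\ker\varphi$ is non-degenerate of positive dimension $2g$, one can find $u_1,u_2\in\mathscr V$ with $\langle u_1,u_2\rangle_\pi\ne 0$, and $\mathrm{span}(u_1,u_2)$ is a non-isotropic $2$-plane. Moreover, any non-isotropic subspace $\mathscr U \subset \mathscr V$ of dimension at least~$3$ still contains such a pair $u_1,u_2$, whose span is a strictly smaller $2$-dimensional non-isotropic subspace, so $\mathscr U$ cannot be minimal. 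Hence the minimum is exactly $2$, and the corresponding maximal poor $\mathscr R$ has codimension~$2$.

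I do not anticipate a serious obstacle: once the problem is recast via annihilators, the content is really just these two short linear-algebra computations, of which the more delicate one is the argument that a non-isotropic subspace of dimension three or more is never minimal because a non-degenerate pairing can always be witnessed by two of its vectors.
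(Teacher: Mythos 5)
Your argument is correct, and the paper itself gives no proof of this proposition (it is stated as ``easy''), so there is nothing to diverge from: passing to $\mathrm{Ann}(\mathscr R)$ via condition (2) and doing the linear algebra for the degenerate form with radical $\ker\varphi$ of dimension $n-2g$ is exactly the intended routine argument, giving $\dim\mathscr R=n-(n-g)=g$ for minimal rich spaces and $\dim\mathrm{Ann}(\mathscr R)=2$ for maximal poor ones. The only points worth spelling out in a full write-up are that a maximal isotropic subspace must have Lagrangian image in $\mathscr V/\ker\varphi$ (any isotropic subspace of a symplectic space extends to a Lagrangian), and that the requirement that restriction spaces be spanned by integral vectors changes nothing, since the form, its radical, and Lagrangian/non-isotropic subspaces of the required dimensions can all be taken rational.
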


Taking into account that the genus of the suspension surface is bounded from above by $n/2$ we see that
maximal poor restriction spaces are larger in dimension than minimal rich ones if $n>4$.

\begin{exam}
Consider in more detail Example~\ref{examplenovak} from the Introduction. The restriction space $\mathscr R$
is $(n-2)$-dimensional and generated by $h_1-h_3,h_2-h_4,\ldots,h_{n-2}-h_n$ in the notation of Section~\ref{restrictionsec}.

The restriction space
$$\langle h_1-h_3,h_2-h_4,\ldots,h_{n-2}-h_n\rangle$$
for interval exchange transformations with permutation~\eqref{permnov} is poor.

Indeed, the space $\mathrm{Ann}(\mathscr R)$ is $2$-dimensional and generated by
$e_1+e_3+\ldots+e_{n-1}$ and $e_2+e_4+\ldots+e_n$. 
For the bilinear form~\eqref{bilinearform} we have
$$\langle e_1+e_3+\ldots+e_{n-1},e_2+e_4+\ldots+e_n\rangle=3k-1,$$
where $n=2k$. Thus, $\mathrm{Ann}(\mathscr R)$ is not isotropic.

Since codimension of $\mathscr R$ is two it is a maximal poor restriction space.
\end{exam}

\section{Instability of minimal interval exchange transformations with rich restrictions}\label{instability}

Suppose we have a minimal interval exchange transformation $T_{\pi,\mathbf a}$ such that
the full restriction space $\mathscr R(T_{\pi,\mathbf a})$ is rich. Assume for simplicity that $T_{\pi,\mathbf a}$ is uniquely ergodic.
What happens under small perturbation of the parameters?

In this case, as we have seen above, the asymptotic cycle of $T_{\pi,\mathbf a}$ lies in $\mathscr R(T_{\pi,\mathbf a})$.
Therefore, the cycles $c_{\pi,\mathbf a,k}(x)$ form a smaller and smaller angle with $T_{\pi,\mathbf a}$ when $k$ grows.
If we take $\mathbf a'$ close to $\mathbf a$ and satisfying the same restrictions, for large enough $k$, some new cycles appear among
$c_{\pi,\mathbf a',k}(x)$, which do not belong to $\mathcal C_{\pi,\mathbf a,k}$ but are linear combinations with natural coefficients
of cycles from $\mathcal C_{\pi,\mathbf a,j}$ with $j<k$.

These new cycles still form a small angle with $\mathscr R(T_{\pi,\mathbf a})$
and there is a good chance that some of them will have the form $\widehat r$ with $r\in\mathscr R(T_{\pi,\mathbf a})$.
This means that some new restriction that was not realized by~$T_{\pi,\mathbf a}$, will be realized by~$T_{\pi,\mathbf a'}$.
The new realized restrictions may cause $T_{\pi,\mathbf a'}$ to be non-minimal. And if this happens,
it remains true for $T_{\pi,\mathbf a''}$ if $\mathbf a''$ is sufficiently close to $\mathbf a'$ and satisfies the same restrictions.

So, it is natural to expect that minimal interval exchange transformations $T_{\pi,\mathbf a}$ will never
be $\mathscr R(T_{\pi,\mathbf a})$-stably minimal.

\begin{conj}\label{unstable-conj}
If $\mathscr R$ is a rich restriction space with respect to $\langle\,,\rangle_\pi$ and $T_{\pi,\mathbf a}$
satisfies all restrictions from~$\mathscr R$, then $T_{\pi,\mathbf a}$ is not $\mathscr R$-stably minimal.
\end{conj}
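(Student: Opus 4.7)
The strategy is to perturb $\mathbf a$ within the affine subspace on which the restrictions $\mathscr R$ are preserved, and to produce a new, previously unrealized simple restriction that becomes realized after the perturbation, thereby forcing non-minimality through Imanishi's theorem. Richness of $\mathscr R$ enters in two essential ways. First, through Proposition~\ref{obvious}, it places the asymptotic cycle $c_{\pi,\mathbf a}$ itself inside $\mathscr R$ (more precisely, $\mathscr R$ must be viewed as containing $\mathscr R(T_{\pi,\mathbf a})$, which is automatically rich whenever $\mathscr R$ is). Second, it implies that $\mathrm{Ann}(\mathscr R)\subset H^1(\Sigma_{\pi,\mathbf a},S_{\pi,\mathbf a};\mathbb R)$ is isotropic with respect to $\langle\,,\rangle_\pi$, which will supply enough deformation freedom to impose an extra period vanishing condition without enlarging the restriction space.

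The first step is to find a sequence of candidate simple restrictions $r_k\in H_1(\Sigma_{\pi,\mathbf a},S_{\pi,\mathbf a};\mathbb Z)\setminus\mathscr R$ for which the $\omega_{\pi,\mathbf a}$-periods are small. A natural source is the Zorich cycles: choose $x_k\in[0,1)$ and integers $k$ so that $T_{\pi,\mathbf a}^k(x_k)$ lies close to the original partition point $x_k$ (or to another point of $S_{\pi,\mathbf a}$), and set $r_k=\widehat{c_{\pi,\mathbf a,k}(x_k)}$ in the notation introduced before Lemma~\ref{arc-restriction-lem}. Since $c_{\pi,\mathbf a}\in\mathscr R$, Proposition~\ref{asymptoticcycle} implies that the residual periods $\int_{r_k}\omega_{\pi,\mathbf a}$ tend to zero, while Lemma~\ref{arc-restriction-lem} forces $r_k\notin\mathscr R$ as soon as the almost-return is to a different endpoint combinatorics than what $\mathscr R$ already prescribes. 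Each $r_k$ admits a piecewise smooth representative consisting of a long leaf arc concatenated with a short transversal segment.

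Next, since $r_k\notin\mathscr R$ and $\mathrm{Ann}(\mathrm{Ann}(\mathscr R))=\mathscr R$, the linear functional $r_k$ is non-zero on $\mathrm{Ann}(\mathscr R)$. Hence there exists $\mathbf b_k\in\mathrm{Ann}(\mathscr R)$ with $\mathbf b_k(r_k)=-\int_{r_k}\omega_{\pi,\mathbf a}$, and $\|\mathbf b_k\|$ tends to zero with $k$. Setting $\mathbf a_k'=\mathbf a+\mathbf b_k$, one obtains $\mathscr R(T_{\pi,\mathbf a_k'})\supset\mathscr R$ and $r_k\in\mathscr R(T_{\pi,\mathbf a_k'})$; by a generic further adjustment of $\mathbf b_k$ within the codimension-one slice $\{\mathbf b\in\mathrm{Ann}(\mathscr R):\mathbf b(r_k)=-\int_{r_k}\omega_{\pi,\mathbf a}\}$, no additional integral restrictions appear, so $\mathscr R(T_{\pi,\mathbf a_k'})=\mathscr R$ (up to replacing $\mathscr R$ by $\mathscr R+\mathbb Z r_k$, which one can then check against a natural enumeration of countably many codimension-one loci to be avoided). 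Finally, one shows that for a suitable sign of the perturbation $\mathbf b_k$, the almost-closed trajectory witnessing the smallness of $\int_{r_k}\omega_{\pi,\mathbf a}$ is pushed to close up exactly, giving an honest realization of $r_k$ by $T_{\pi,\mathbf a_k'}$; this is the usual creation-of-saddle-connections mechanism inverse to Proposition~\ref{zorich-general}.

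The main obstacles are twofold. First, one must verify that the candidate classes $r_k$ are genuinely \emph{simple} in the sense of Definition~\ref{realized}: the long leaf segment concatenated with its horizontal return must admit a non-self-intersecting representative on $\Sigma_{\pi,\mathbf a}$. Controlling transverse self-crossings of long segments of a minimal foliation seems to require a Diophantine or Rauzy--Veech renormalization argument, and is to my mind the most serious technical hurdle. Second, one must ensure that the newly realized $r_k$ truly obstructs minimality; by Imanishi's theorem this reduces to showing that the union of realized leaves disconnects $\Sigma_{\pi,\mathbf a_k'}$ into non-trivial pieces. When $r_k$ is a loop restriction non-trivial in absolute homology this is automatic; the general case, however, requires a combinatorial argument relating the position of $r_k$ to the already realized restrictions $\mathscr R_0\subset\mathscr R$. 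The interplay between these two obstacles—simple representability of $r_k$ and its topological effect on the suspension surface—is presumably the reason the statement remains a conjecture in the paper.
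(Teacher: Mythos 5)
You should first note that the paper does not prove this statement at all: it is Conjecture~\ref{unstable-conj}, and the only thing the paper offers is the informal discussion at the start of Section~\ref{instability}, which your proposal essentially reproduces. Your own closing caveats already concede that the argument is incomplete, but the gaps are worse than the two you name. The central structural problem is this: your perturbed points $\mathbf a_k'=\mathbf a+\mathbf b_k$ are chosen precisely so that a new integral class $r_k\notin\mathscr R$ acquires zero period, i.e.\ becomes a restriction satisfied by $T_{\pi,\mathbf a_k'}$. Hence $\mathscr R(T_{\pi,\mathbf a_k'})$ strictly contains $\mathscr R$, and such points are excluded from the set quantified over in the definition of $\mathscr R$-stable minimality, which requires $\mathscr R(T_{\pi,\mathbf a'})=\mathscr R$ exactly; so even if everything else worked, you would not have contradicted stable minimality. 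The parenthetical fix (``replace $\mathscr R$ by $\mathscr R+\mathbb Z r_k$ and avoid countably many loci'') changes the statement rather than proving it. Note that at any point with $\mathscr R(T_{\pi,\mathbf a'})=\mathscr R$, every realized restriction is an integral class with vanishing period and therefore already lies in $\mathscr R$; so the mechanism must be to realize, after perturbation inside the $\mathscr R$-slice, a class that is \emph{already} in $\mathscr R$ --- which is exactly the paper's heuristic (``some of them will have the form $\widehat r$ with $r\in\mathscr R$'') and exactly where richness is indispensable: for poor $\mathscr R$ the separating-cycle estimate in the proof of Theorem~\ref{stabilityofminimal} shows the return cycles can never equal $\widehat r$ with $r\in\mathscr R$. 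Your construction, by contrast, could be attempted verbatim for a poor $\mathscr R$, a sign that it is aimed at the wrong target.

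Several supporting claims are also inaccurate. Proposition~\ref{obvious} places the asymptotic cycle in the \emph{full} restriction space $\mathscr R(T_{\pi,\mathbf a})$, not in $\mathscr R$ (your parenthetical inverts the inclusion $\mathscr R\subset\mathscr R(T_{\pi,\mathbf a})$), and the smallness of the periods $\int_{r_k}\omega_{\pi,\mathbf a}$ is plain recurrence of a minimal transformation, with no richness involved. The claim that a loop restriction nontrivial in absolute homology ``automatically'' disconnects is backwards: a simple closed curve on $\Sigma_{\pi,\mathbf a}$ separates if and only if it is null-homologous, and a homologically nontrivial union of saddle connections need not obstruct minimality, so the Imanishi step is unresolved in all cases. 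Finally, the ``creation of saddle connections'' step --- that killing the period of $r_k$ actually produces a realization, uniformly along a long orbit segment whose dynamics the perturbation also changes --- is asserted as the inverse of Proposition~\ref{zorich-general} but not proved, and the simplicity of the candidate classes is, as you say, untreated. In sum, the proposal is a restatement of the paper's motivating heuristic with additional inaccuracies; the statement remains open both in the paper and after your attempt.
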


Due to stability of non-minimality (Proposition~\ref{zorich-general}) this conjecture immediately implies
the following.

\begin{coro}[to Conjecture~\ref{unstable-conj}]
Let $\mathscr R$ be a rich restriction space with respect to $\langle\,,\rangle_\pi$.
Then the subset $M_{\pi,\mathscr R}$ of minimal exchange
transformations from $X_{\pi,\mathscr R}=\{T_{\pi,\mathbf a}\;;\;\mathscr R(T_{\pi,\mathbf a})\supset\mathscr R\}$
is nowhere dense in~$X_{\pi,\mathscr R}$.
\end{coro}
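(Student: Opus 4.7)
The plan is to derive the corollary from Conjecture~\ref{unstable-conj} together with the stability-of-realization statement of Proposition~\ref{zorich-general}, by showing that every non-empty open subset of $X_{\pi,\mathscr R}$ contains an open subset on which $T_{\pi,\mathbf a}$ is never minimal. This is exactly what it means for $M_{\pi,\mathscr R}$ to be nowhere dense in $X_{\pi,\mathscr R}$.

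Let $V\subset X_{\pi,\mathscr R}$ be an arbitrary non-empty open subset. If $V\cap M_{\pi,\mathscr R}=\emptyset$ there is nothing to prove, so I may pick $\mathbf a\in V\cap M_{\pi,\mathscr R}$. Since $\mathscr R(T_{\pi,\mathbf a})\supset\mathscr R$ and $\mathscr R$ is rich, $T_{\pi,\mathbf a}$ satisfies the hypothesis of Conjecture~\ref{unstable-conj}, so $T_{\pi,\mathbf a}$ is not $\mathscr R$-stably minimal. Unwinding the definition of $\mathscr R$-stable minimality, this produces $\mathbf a'\in V$, arbitrarily close to $\mathbf a$, with $\mathscr R(T_{\pi,\mathbf a'})$ \emph{equal} to $\mathscr R$ and $T_{\pi,\mathbf a'}$ not minimal.

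Next I would invoke Imanishi's theorem, as already used in the proof of Theorem~\ref{stabilityofminimal}: the non-minimality of $\mathcal F_{\pi,\mathbf a'}$ forces at least one simple restriction $r$ to be realized by $T_{\pi,\mathbf a'}$, and by construction $r\in\mathscr R(T_{\pi,\mathbf a'})=\mathscr R$. Applying Proposition~\ref{zorich-general} to $r$ yields an open neighborhood $W$ of $\mathbf a'$ in $\Delta^{n-1}$ such that every $\mathbf a''\in W$ satisfying $\mathscr R(T_{\pi,\mathbf a''})\supset\mathscr R(T_{\pi,\mathbf a'})=\mathscr R$ also realizes $r$ and is therefore not minimal. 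Every point of $W\cap X_{\pi,\mathscr R}$ automatically satisfies the inclusion $\mathscr R(T_{\pi,\mathbf a''})\supset\mathscr R$, so after shrinking $W$ if necessary we obtain a non-empty open subset $W\cap X_{\pi,\mathscr R}\subset V$ entirely disjoint from $M_{\pi,\mathscr R}$, and the corollary follows.

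The only delicate point is matching hypotheses between the two inputs: Proposition~\ref{zorich-general} propagates realization of $r$ only to parameters $\mathbf a''$ with $\mathscr R(T_{\pi,\mathbf a''})\supset\mathscr R(T_{\pi,\mathbf a'})$, so it is essential that the witness $\mathbf a'$ produced by Conjecture~\ref{unstable-conj} have restriction space \emph{equal} to $\mathscr R$ rather than strictly larger. This is precisely what the definition of $\mathscr R$-stable minimality delivers, so the logical chain closes without a gap; beyond checking this, the argument is merely an unpacking of definitions.
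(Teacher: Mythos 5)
Your skeleton is the paper's intended one-line derivation: pick a minimal point $\mathbf a$ in a given open subset of $X_{\pi,\mathscr R}$, use Conjecture~\ref{unstable-conj} to get a nearby non-minimal $\mathbf a'$ with $\mathscr R(T_{\pi,\mathbf a'})=\mathscr R$ (and your remark that this \emph{equality} is what makes the hypothesis of Proposition~\ref{zorich-general} match the inclusion defining $X_{\pi,\mathscr R}$ is exactly right), then spread non-minimality over a relative neighbourhood. The gap is the final inference ``realizes $r$ and is therefore not minimal''. Realizing a simple restriction is equivalent to the \emph{failure of Keane's condition}, not of minimality: as the paper itself states when quoting Imanishi's theorem in the proof of Theorem~\ref{stabilityofminimal}, non-minimality requires a union of realizations that cuts the surface into two non-trivial pieces, whereas ``Keane's condition is equivalent to the absence of any realized restrictions''. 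A single realized arc restriction (one saddle connection) coexists with minimality --- refine the partition of a minimal transformation satisfying Keane's condition at two points of one orbit: the leaf segment joining the two new marked points realizes a non-zero arc restriction while the map itself is unchanged and still minimal --- and even a realized non-zero loop restriction does: slitting a flat torus with irrational vertical direction along two parallel vertical segments and regluing crosswise gives a minimal vertical foliation in which the two saddle connections form a non-separating simple closed curve contained in a leaf. So your last step only shows that Keane's condition fails on an open subset of $V\cap X_{\pi,\mathscr R}$, which is strictly weaker than the corollary.

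The repair is precisely the ``stability of non-minimality'' the paper reads into Proposition~\ref{zorich-general}. From Imanishi's theorem take the \emph{whole} separating union of realizations (closed leaves and saddle connections) witnessing non-minimality of $T_{\pi,\mathbf a'}$; the homology class of each constituent lies in $\mathscr R(T_{\pi,\mathbf a'})=\mathscr R\subset\mathscr R(T_{\pi,\mathbf a''})$, so for $\mathbf a''$ close to $\mathbf a'$ each saddle connection and closed leaf persists, by the same period argument as in the proof of Proposition~\ref{zorich-general}; moreover the perturbed configuration is a small deformation of the original one, hence still cuts the surface into two non-trivial pieces, and $T_{\pi,\mathbf a''}$ is non-minimal for every $\mathbf a''$ in that neighbourhood with $\mathscr R(T_{\pi,\mathbf a''})\supset\mathscr R$. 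Substituting this for your single-restriction step, the rest of your argument goes through and coincides with the paper's derivation.
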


In some cases, $M_{\pi,\mathscr R}$ is contained in a codimension one subset
of $X_{\pi,\mathscr R}$, and in some other cases, the codimension of
$M_{\pi,\mathscr R}$ in $X_{\pi,\mathscr R}$ is known to be between $0$ and $1$, see Example~\ref{rauzy} and Subsection~\ref{spi}.
This motivates us to conjecture that the following general statement is true.

\begin{conj}\label{measurezero}
Let $\mathscr R$ be a rich restriction space with respect to $\langle\,,\rangle_\pi$.
Then the subset $M_{\pi,\mathscr R}$ has zero Lebesgue measure in $X_{\pi,\mathscr R}$.
\end{conj}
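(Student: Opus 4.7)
The plan is to turn the heuristic given at the beginning of this section into a measure-theoretic argument by combining it with a renormalization procedure. First I would reduce to the case in which $T_{\pi,\mathbf a}$ satisfies Keane's condition for almost every $\mathbf a\in X_{\pi,\mathscr R}$: if not, then some element of $\mathscr R_0$ is realized on a positive-measure subset, and one can collapse the realized saddle connections and pass to the auxiliary surfaces $\widehat\Sigma_{\pi,\mathbf a}$ and induced transformations $T_{\widehat\pi,\widehat{\mathbf a}}$ exactly as in the last part of the proof of Theorem~\ref{uniquelyergodictheo}. Since this operation strictly decreases the complexity (genus plus number of marked points) of the suspension surface, an induction on this complexity reduces matters to the Keane case, where no restriction is realized initially.

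Under this reduction, the second and central step is Rauzy--Veech induction, which preserves $X_{\pi,\mathscr R}$ up to relabelling of the restrictions. By Proposition~\ref{obvious}, richness forces the asymptotic cycle $c_{\pi,\mathbf a}$ to lie in $\mathscr R$ for every $\mathbf a\in X_{\pi,\mathscr R}$, so the Zorich cycles $c_{\pi,\mathbf a,k}(x)$ converge in direction to $\mathscr R$. Hence for large $k$ some of them should decompose as $\widehat r+(\text{shorter Zorich cycles})$ with $r\in\mathscr R\setminus\mathscr R_0$ a loop restriction, and an arbitrarily small perturbation of $\mathbf a$ within $X_{\pi,\mathscr R}$ should realize $r$ as an actual saddle connection. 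Combined with the stability of realized restrictions (Proposition~\ref{zorich-general}), this would populate $X_{\pi,\mathscr R}\setminus M_{\pi,\mathscr R}$ with a dense open subset near every such $\mathbf a$. To upgrade ``dense open complement'' to ``complement of full measure'' I would appeal to the hyperbolicity of the Teichm\"uller flow on the relevant stratum: I expect the Kontsevich--Zorich cocycle restricted to the symplectic subquotient of $H_1(\Sigma_{\pi,\mathbf a};\mathbb R)$ determined by $\mathscr R$ to have strictly positive top Lyapunov exponent, and a Borel--Cantelli argument along Rauzy--Veech times would then force the set of parameters whose entire forward orbit never realizes a new loop restriction to have measure zero.

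The principal obstacle is the effective perturbation statement in the second paragraph: one must show not merely that the asymptotic cycle lies in $\mathscr R$, but that the Zorich approximations to it effectively produce perturbations of $\mathbf a$, staying inside $X_{\pi,\mathscr R}$, for which some loop restriction from $\mathscr R\setminus\mathscr R_0$ becomes realized. Richness must enter essentially at this point, since for a poor $\mathscr R$ the separating cycle of Definition~\ref{separating-def} obstructs exactly this kind of deformation, as exploited in the proof of Theorem~\ref{stabilityofminimal}. In the specific setting of Example~\ref{rauzy} this strategy should recover the Avila--Hubert--Skripchenko theorem~\cite{ahs13} that the Rauzy gasket has Hausdorff dimension strictly less than two, and a uniform quantitative version of the transfer-operator estimates used there seems to be the natural tool; arranging such uniformity across arbitrary rich restriction spaces is, in my view, the main reason why Conjecture~\ref{measurezero} remains open.
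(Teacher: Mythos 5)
You have not proved anything here, and neither does the paper: the statement you were asked about is posed as Conjecture~\ref{measurezero}, supported only by the heuristic discussion at the start of Section~\ref{instability} and by the examples (Novikov's problem, the Rauzy gasket of Example~\ref{rauzy} and Subsection~\ref{spi}). Your proposal essentially reproduces that heuristic and then sketches a programme, and you yourself concede at the end that it remains open; so the honest verdict is that there is a genuine gap, in fact two. First, your ``effective perturbation'' step --- passing from $c_{\pi,\mathbf a}\in\mathscr R$ (Proposition~\ref{obvious}) to the existence, arbitrarily close to $\mathbf a$ inside $X_{\pi,\mathscr R}$, of parameters realizing a new loop restriction --- is exactly Conjecture~\ref{unstable-conj}, which is itself unproved; and even if it were granted, together with Proposition~\ref{zorich-general} it only yields that $M_{\pi,\mathscr R}$ is nowhere dense (the Corollary to Conjecture~\ref{unstable-conj}), which is strictly weaker than Lebesgue measure zero --- the Rauzy gasket itself illustrates that ``nowhere dense'' leaves the measure question untouched.

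Second, the measure-theoretic upgrade you propose does not go through as stated. The locus $X_{\pi,\mathscr R}$ has measure zero in its stratum and is not $\mathrm{SL}(2,\mathbb R)$-invariant (the conditions $\int_r\omega_{\pi,\mathbf a}=0$ are imposed on the real part only), so positivity of Lyapunov exponents of the Kontsevich--Zorich cocycle and hyperbolicity of the Teichm\"uller flow, which are statements about almost every point of the stratum with respect to the Masur--Veech measure, give no information about almost every point of $X_{\pi,\mathscr R}$. To run a Borel--Cantelli argument along Rauzy--Veech times restricted to $X_{\pi,\mathscr R}$ you would need an ergodic invariant (or at least conservative) measure in the Lebesgue class of $X_{\pi,\mathscr R}$ for the induced renormalization, together with quantitative recurrence or distortion estimates; none of this is available in general, and the transfer-operator machinery of~\cite{ahs13} (which, incidentally, concerns the Hausdorff dimension of the already-known-to-be-null Rauzy gasket rather than something your strategy would ``recover'') is built for that one specific family. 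Identifying the right renormalization and invariant measure on a general rich locus is precisely the open problem, so your sketch should be presented as a research programme consistent with the paper's motivation, not as a proof.
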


\section{Examples}\label{Examples}
\subsection{\newtext{Rank two interval exchange transformations} }\label{ranktwo}
\newtext{\emph{The rank} of an interval exchange
transformation $T_{\pi,\mathbf a}$ is defined as $\rank_{\mathbb Q}\langle a_1,\ldots,a_n\rangle$.
M.\,Boshernitzan~\cite{b88} shows---in different terms---that
a rank two minimal interval echange transformations is always
stably minimal and uniquely ergodic. This can be viewed as a simple
illustration to the phenomenon discussed
in this paper as we have the following.

\begin{prop}
If $T_{\pi,\mathbf a}$ is a rank two minimal interval exchange transformation,
then $\mathscr R(T_{\pi,\mathbf a})$ is poor.
\end{prop}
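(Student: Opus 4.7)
By Proposition~\ref{saf=0}, the statement is equivalent to $\saf(T_{\pi,\mathbf a})\neq 0$ whenever $T_{\pi,\mathbf a}$ has rank two and is minimal. Write $a_i=p_i\alpha+q_i\beta$ with $p_i,q_i\in\mathbb Z$ and $\alpha,\beta\in\mathbb R$ linearly independent over~$\mathbb Q$ (clearing denominators in a $\mathbb Q$-basis of $\langle a_1,\ldots,a_n\rangle_{\mathbb Q}$). A direct expansion of the defining sum yields
\begin{equation*}
\saf(T_{\pi,\mathbf a})\;=\;2\,\langle P,Q\rangle_\pi\cdot(\alpha\wedge_{\mathbb Q}\beta),\qquad P=(p_i),\ Q=(q_i)\in\mathbb Z^n,
\end{equation*}
with $\langle\,,\rangle_\pi$ given by~\eqref{bilinearform}. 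Since $\alpha\wedge_{\mathbb Q}\beta\neq 0$ in $\mathbb R\wedge_{\mathbb Q}\mathbb R$, it remains to show $\langle P,Q\rangle_\pi\neq 0$ for rank two minimal $T_{\pi,\mathbf a}$; equivalently, by Proposition~\ref{obvious}, that the asymptotic cycle $c_{\pi,\mathbf a}=\alpha c_P+\beta c_Q$ (with $c_P,c_Q$ the absolute integer classes obtained from $P^*=\sum p_ie_i$ and $Q^*=\sum q_ie_i$ under Poincar\'e duality followed by projection to absolute homology) does not lie in~$\mathscr R(T_{\pi,\mathbf a})$.

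The non-vanishing of $\langle P,Q\rangle_\pi$ would be established by invoking Boshernitzan's structural analysis of rank two IETs in~\cite{b88}. He shows first that a minimal rank two interval exchange is automatically uniquely ergodic, and second---through a carefully chosen sequence of first-return maps on a shrinking nested family of subintervals---that $T_{\pi,\mathbf a}$ can be realised as the induced map of a minimal irrational rotation $R_\theta$ with $\theta\in\mathbb Q(\alpha,\beta)$. Since the SAF invariant is preserved under passage to first-return maps on subintervals (Arnoux~\cite{Ar80}), one obtains $\saf(T_{\pi,\mathbf a})=\saf(R_\theta)$; a direct computation for the 2-interval rotation gives $\saf(R_\theta)=\theta\wedge_{\mathbb Q}1$, which is nonzero because $\theta$ is irrational. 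This completes the argument.

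The main obstacle is to extract this ``rotational reduction'' rigorously from Boshernitzan's paper, since his Theorem~9.1 only states stability of minimality and the underlying reduction to a rotation is implicit in his proof; writing it out with the changes in $\pi$ and $\mathbf a$ tracked through each induction step is the technical core. As a self-contained alternative, one could argue topologically from the hypothesis $\langle P,Q\rangle_\pi=0$: the absolute classes $c_P,c_Q$ then have vanishing intersection and can be simultaneously represented by disjoint simple multicurves on $\Sigma_{\pi,\mathbf a}$, and one would try to show that such a configuration, together with $[\omega_{\pi,\mathbf a}]$ lying in the isotropic plane spanned by $P^*,Q^*$, forces a saddle connection in $\mathcal F_{\pi,\mathbf a}$ and hence contradicts minimality.
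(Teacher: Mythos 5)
Your reduction is fine as far as it goes: by Proposition~\ref{saf=0} the claim is equivalent to $\saf(T_{\pi,\mathbf a})\ne0$, and the identity $\saf(T_{\pi,\mathbf a})=2\langle P,Q\rangle_\pi\,(\alpha\wedge_{\mathbb Q}\beta)$ is correct. But the entire content of the proposition --- that minimality forces $\langle P,Q\rangle_\pi\ne0$ --- is precisely the step you do not prove. You delegate it to a ``rotational reduction'' said to be implicit in \cite{b88}, and you acknowledge yourself that extracting it is the technical core; that is not a citation of a stated result (Theorem~9.1 there asserts stability of minimality, and Boshernitzan's unique ergodicity argument does not proceed by exhibiting the map as an induced map of a rotation). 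Worse, the reduction as you state it is false in general: the first return map of a circle rotation to a subinterval exchanges at most three intervals, whereas, for instance, the transformation of Example~\ref{examplenovak} with $k=2$ (lengths $(a_1,a_2,a_1,a_2)$, order-reversing permutation) is a minimal rank two exchange of four intervals whose four translation constants $a_1+2a_2,\ a_2,\ -a_1,\ -(2a_1+a_2)$ are pairwise distinct, so it has three genuine discontinuities and a genus two suspension surface and cannot be an induced map of a rotation. Since $\saf$ is an invariant of the suspended foliation (preserved under inducing), not of measure-theoretic conjugacy, no weaker ``isomorphic to a rotation'' statement would rescue the computation $\saf(T_{\pi,\mathbf a})=\saf(R_\theta)$. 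The fallback sketch in your last paragraph (disjoint representatives of $c_P,c_Q$ forcing a saddle connection) is likewise only a plan, with no argument that a saddle connection actually appears.

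For comparison, the paper closes exactly this gap by a direct topological argument: rank two means $\omega_{\pi,\mathbf a}=f^*(\lambda_1\,d\varphi_1+\lambda_2\,d\varphi_2)$ for a map $f:\Sigma_{\pi,\mathbf a}\rightarrow\mathbb T^2$ with $\lambda_1,\lambda_2$ incommensurable; choosing $f$ with uniformly bounded fibers, a non-closed regular leaf must drift along an irrational line in $\mathbb T^2$, so $f_*(c_{\pi,\mathbf a})\ne0$, while $\ker f_*\cap H_1(\Sigma_{\pi,\mathbf a})=\mathscr R(T_{\pi,\mathbf a})\cap H_1(\Sigma_{\pi,\mathbf a})$; hence $c_{\pi,\mathbf a}\notin\mathscr R(T_{\pi,\mathbf a})$ and Proposition~\ref{obvious} gives poorness. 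If you prefer your SAF formulation, note that $\langle P,Q\rangle_\pi$ is (up to sign and normalization) the degree of such a map $f$, so you would still need an argument of this dynamical kind; the missing step is the whole proof, not a technicality.
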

\begin{proof}
If $T_{\pi,\mathbf a}$ has rank two, then $\omega_{\pi,\mathbf a}$ has the form
$f^*(\lambda_1\,d\varphi_1,+\lambda_2\,d\varphi_2)$, where
$f:\Sigma_{\pi,\mathbf a}\rightarrow\mathbb T^2$ is a smooth map,
$\varphi_1,\varphi_2$ are angular coordinates on the torus $\mathbb T^2$,
and $\lambda_1,\lambda_2$ are incommensurable reals. The large
freedom in choosing the map $f$ can be used to make the number
of preimages of any point of $\mathbb T^2$ uniformly bounded from above by some $k\in\mathbb N$.

Let $\gamma$ be a regular leaf of $\mathcal F_{\pi,\mathbf a}$. Its image $f(\gamma)$
in $\mathbb T^2$ is contained in a straight line $\ell$, which is a leaf of an irrational winding of $\mathbb T^2$.
If $T_{\pi,\mathbf a}$ is minimal, then $\gamma$ is not closed.
Since $f(\gamma)$ visits every point of $\ell$ at most $k$ times it follows
that for a certain parametrization $\gamma(t)$ there must be a non-zero limit
$$\frac1t\int_0^tdf\bigl(\gamma(t)\bigr)\quad(t\rightarrow\infty).$$
This means that the image of the asymptotic cycle $c_{\pi,\mathbf a}$ under
the induced map $f_*:H_1(\Sigma_{\pi,\mathbf a})\rightarrow H_1(\mathbb T^2)$
is not zero. The kernel of this map is $\mathscr R(T_{\pi,\mathbf a})\cap H_1(\Sigma_{\pi,\mathbf a})$,
so we have $c_{\pi,\mathbf a}\notin\mathscr R(T_{\pi,\mathbf a})$.
By Proposition~\ref{obvious} $\mathscr R(T_{\pi,\mathbf a})$ is poor.
\end{proof}
}
\subsection{Quadratic differentials}
It is well known that the measured singular foliation defined on a Riemann surface by the real part of
a generic quadratic differential with prescribed multiplicities of zeros
is minimal. H.\,Masur even shows in~\cite{m82} that almost all such foliations are uniquely
ergodic.

Let $M$ be an oriented surface.
The family of all measured foliations on $M$ that can be defined by a quadratic differential $q$ on $M$ (for some complex structure)
with zeros at fixed points with prescribed multiplicities gives rise to a family of foliations on the double cover $\widehat M$
branched at zeros of $q$ of odd multiplicity. A foliation from this family can be defined by a closed $1$-form on $\widehat M$
but this $1$-form is already not generic even if so is $q$.

More precisely, the family of $1$-forms $\omega$ that
we obtain in this way is characterized locally by the set~$S$ of zeros with fixed multiplicities  and
the condition $\iota^*\omega=-\omega$, where $\iota$ is the involution of $\widehat M$ that exchanges the sheets
of the covering map $\widehat M\rightarrow M$.

Thus, we have a family of closed $1$-forms with restrictions
$$\int\limits_{c+\iota_*c}\omega=0,$$
where $c\in H_1(\widehat M,S;\mathbb Z)$. The restriction space here is 
\begin{equation}\label{Rqd}
\mathscr R=\{c\in H_1(\widehat M,S;\mathbb R)\;;\;c=\iota_*c\}
\end{equation}

\begin{prop}
Let $\iota:\widehat M\rightarrow\widehat M$ be an orientation preserving involution. Then
the restriction space~\eqref{Rqd} is poor.
\end{prop}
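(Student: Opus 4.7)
The plan is to verify that condition~(1) in the definition of a rich subspace fails for $\mathscr R$, i.e.\ that $\mathscr R\cap\im\varphi$ is not coisotropic in the symplectic space $\im\varphi=H_1(\widehat M;\mathbb R)$. The argument splits into a purely formal algebraic step and a geometric non-vanishing step.

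For the algebraic step, since $\iota_*^2=\mathrm{id}$ both $H_1(\widehat M,S;\mathbb R)$ and $H_1(\widehat M;\mathbb R)$ decompose as $\pm1$-eigenspaces of $\iota_*$; let $V_+,V_-\subset H_1(\widehat M;\mathbb R)$ be the absolute eigenspaces. By~\eqref{Rqd}, $\mathscr R$ is precisely the $(+1)$-eigenspace in relative homology, so $\mathscr R\cap H_1(\widehat M;\mathbb R)=V_+$. Since $\iota$ preserves orientation, $\iota_*$ is a symplectomorphism of $(H_1(\widehat M;\mathbb R),\frown)$, and for any $\alpha\in V_+$ and $\beta\in V_-$,
\[\alpha\frown\beta=\iota_*\alpha\frown\iota_*\beta=\alpha\frown(-\beta)=-\alpha\frown\beta,\]
so $\alpha\frown\beta=0$. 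Together with non-degeneracy of $\frown$ on $H_1(\widehat M;\mathbb R)$ and the direct-sum decomposition, this forces $\frown$ to be non-degenerate on each summand and $V_+^\perp=V_-$. Hence $V_+$ is coisotropic if and only if $V_-=0$, and the proposition reduces to showing that $\iota_*$ is not the identity on $H_1(\widehat M;\mathbb R)$.

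For the geometric step, I would produce a non-zero element of $V_-$ by using the defining $1$-form. In the quadratic-differential setup $\omega_{\pi,\mathbf a}$ satisfies $\iota^*\omega_{\pi,\mathbf a}=-\omega_{\pi,\mathbf a}$, so $[\omega_{\pi,\mathbf a}]\in H^1(\widehat M;\mathbb R)$ is a $(-1)$-eigenvector of $\iota^*$ and is non-zero whenever the foliation is non-trivial; Poincar\'e duality then produces a non-zero class in $V_-$, which up to scalar is the asymptotic cycle $c_{\pi,\mathbf a}$. Alternatively, a Riemann--Hurwitz count gives $\dim V_-=2g(\widehat M/\iota)-2+B$, where $B$ is the number of branch points, which is strictly positive outside two topologically degenerate cases (a sphere rotation with two fixed points, and a free involution of a torus) that cannot arise from a non-trivial quadratic differential.

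The principal obstacle is exactly this last step: orientation preservation of $\iota$ alone does not suffice to conclude $V_-\neq 0$, as the free involution of a torus demonstrates. The proof has to invoke the specific construction of $\widehat M$ as the branched double cover attached to a quadratic differential---either through the identity $\iota^*\omega=-\omega$, or through the branch structure given by odd-order zeros---in order to close this gap. The cleanest realization of the plan is the Poincar\'e-dual route, since it uses directly the structural feature $\iota^*\omega=-\omega$ that defines the cover.
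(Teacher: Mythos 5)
Your proposal is correct and takes essentially the same route as the paper: the paper's proof likewise takes a non-zero cycle $c$ with $\iota_*c=-c$, uses orientation-preservation of $\iota$ to get $c\frown c'=\iota_*c\frown\iota_*c'=-c\frown c'$ for all $c'\in\mathscr R\cap H_1(\widehat M;\mathbb R)$, and concludes non-coisotropy since $c\notin\mathscr R$. The only difference is that the paper simply asserts the existence of such an anti-invariant cycle, which is precisely the step you flag and close (via $\iota^*\omega=-\omega$ and Poincar\'e duality, or the Riemann--Hurwitz count), so your added care there fills a gap left implicit in the paper rather than constituting a different approach.
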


\begin{proof}
There exists  a non-zero cycle $c\in H_1(M;\mathbb R)$ such that $\iota_*c=-c$.
Since $\iota$ preserves the orientation of $M$ it also preserves
the intersection index. So, for any $c'\in\mathscr R\cap H_1(M;\mathbb R)$
we have $c\frown c'=\iota_*(c)\frown\iota_*(c')=-c\frown c'$, hence $c\frown c'=0$.
On the other hand, $c\notin\mathscr R$.
Thus $\mathscr R\cap H_1(M;\mathbb R)$ is not coisotropic.
\end{proof}

Masur's result on the unique ergodicity for quadratic differentials~\cite{m82} supports our Conjecture~\ref{conj1} in the part
related to Theorem~\ref{uniquelyergodictheo} in this case.

\subsection{Interval exchange maps with flips}
A.\,Nogueira proved in \cite{no89} that almost all interval exchange transformations with flips have periodic orbits.
The suspension surface $M$ of an interval exchange transformation with flips
is defined similarly to the case of ordinary interval exchange transformations, but now it is non-orientable.
The foliation $\mathcal F$ induced on $M$ has orientable leaves but it is not \emph{co}orientable. Let $\widehat M$ be
the orientation double cover of $M$. The preimage of $\mathcal F$ on $\widehat M$ can be defined by a closed
$1$-form~$\omega$.

Let $\iota$ be the involution of $\widehat M$ that exchanges the sheets of the covering map $\widehat M\rightarrow M$. Similarly to
the previous case we will have $\iota^*\omega=-\omega$, which gives rise to the same restriction space~\eqref{Rqd}.
However, now $\iota$ flips the orientation of $\widehat M$, which inverses the conclusion.

\begin{prop}
Let $\iota:\widehat M\rightarrow\widehat M$ be an orientation reversing involution. Then
the restriction space~\eqref{Rqd} is rich.
\end{prop}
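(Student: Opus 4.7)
The plan is to show directly that $\mathscr W := \mathscr R \cap H_1(\widehat M;\mathbb R)$ is Lagrangian (and in particular coisotropic) with respect to the intersection form; by the characterisation of richness recalled in Subsection~\ref{rich-and-poor}, this will give that $\mathscr R$ is rich.

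First I would identify $\mathscr W$. Since an absolute cycle $c\in H_1(\widehat M;\mathbb R)$ lies in $\mathscr R$ precisely when $\iota_*c=c$, we get $\mathscr W=H_1(\widehat M;\mathbb R)^+$, the $+1$-eigenspace of the involution $\iota_*$ acting on $H_1(\widehat M;\mathbb R)$. Because $\iota_*$ is an involution, we get the eigenspace decomposition
$$H_1(\widehat M;\mathbb R)=H_1^+\oplus H_1^-.$$

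Next I would exploit the orientation-reversing hypothesis. For any $a,b\in H_1(\widehat M;\mathbb R)$, diffeomorphism invariance of the intersection pairing together with $\deg(\iota)=-1$ yields
$$\iota_*a\frown \iota_*b=-(a\frown b).$$
Applying this to $a,b\in H_1^+$ gives $a\frown b=-(a\frown b)$, hence $a\frown b=0$; so $H_1^+$ is isotropic. By the same computation $H_1^-$ is isotropic.

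Finally I would compare dimensions. The intersection form is non-degenerate on $H_1(\widehat M;\mathbb R)$ by Poincaré duality, and any element of $H_1^+$ pairs trivially with $H_1^+$ by isotropy; thus if it also paired trivially with $H_1^-$ it would be zero. This shows that the pairing $H_1^+\times H_1^-\to\mathbb R$ is perfect, so $\dim H_1^+=\dim H_1^-=\tfrac12\dim H_1(\widehat M;\mathbb R)$. Therefore $\mathscr W=H_1^+$ is a maximal isotropic, i.e.\ Lagrangian, subspace, hence coisotropic, and $\mathscr R$ is rich.

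The only subtle point is getting the sign in the formula $\iota_*a\frown\iota_*b=-(a\frown b)$ right; this is really the mirror of the sign argument in the preceding orientation-preserving proposition, and it is what flips the conclusion from poor to rich. Everything else is formal linear algebra on the $\iota_*$-eigenspace decomposition, so I do not expect any serious obstacle.
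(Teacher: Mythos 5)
Your argument is correct and is essentially the paper's proof: the key point in both is that an orientation reversing involution flips the sign of the intersection form, so $H_1(\widehat M;\mathbb R)$ splits into $\pm1$-eigenspaces of $\iota_*$ that are both Lagrangian, and $\mathscr R\cap H_1(\widehat M;\mathbb R)$ is one of them (you spell out the sign formula and the dimension count that the paper leaves implicit). The only cosmetic difference is that the paper names the $-1$-eigenspace while, with the convention of~\eqref{Rqd}, the relevant eigenspace is the $+1$-one as you say; since both eigenspaces are Lagrangian this does not affect the conclusion.
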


\begin{proof}
The space $H_1(\widehat M,\mathbb R)$ is symplectic, and the involution $\iota_*$ restricted to it changes
the sign of the symplectic form. This implies that $H_1(\widehat M,\mathbb R)$ splits into a direct sum
of $\pm1$-eigenspaces, which are both Lagrangian. The $-1$-eigenspace is $\mathscr R$.
\end{proof}

\subsection{Measured foliations on non-orientable surfaces}
C.\,Danthony and A.\,Nogueira proved in \cite{dn90} that almost all measured foliations on non-orientable surfaces have a compact leaf.

Let $M$ be a non-orientable surface and $\mathcal F$ a measured singular foliation on $M$. Let $\widehat M$
be the double brunched cover of $M$ on which the preimage of $\mathcal F$ becomes orientable, and $\iota$
the corresponding involution of $\widehat M$.

If $\widehat M$ is orientable then the preimage of $\mathcal F$ on $\widehat M$ is coorientable,
and hence, can be defined by a closed $1$-form $\omega$. We come to exactly the same situation as in
the previous example: $\iota^*\omega=-\omega$, $\iota_*[M]=-[M]$. Thus, foliations close to $\mathcal F$
(with the same singularities) give rise to a family of $1$-forms satisfying rich restrictions.

If $\widehat M$ is non-orientable, then the first return map of $\mathcal F$ on a suitable transversal
is an interval exchange map with flips \emph{and restrictions}. So, after proceeding to the orientation
double cover of $\widehat M$ measured foliations close to $\mathcal F$
will translate to a family of $1$-forms satisfying all restrictions from~\eqref{Rqd} and some additional restrictions.
But the restriction space~\eqref{Rqd} is already rich, so the larger restriction space will also be rich.

\subsection{Novikov's problem}
Let $M$ be a closed and homologous to zero
surface smoothly embedded in the 3-torus $\mathbb T^3$ and $H\in\mathbb R^3$
a non-zero vector. Denote by $\eta$ the following $1$-form on $\mathbb T^3$: $\eta=H_1\,dx_1+
H_2\,dx_2+H_3\,dx_3$, where $x_i$ are the angular coordinates (defined up to $2\pi k$ with $k\in\mathbb Z$)
on $\mathbb T^3$, and by $\omega$ the restriction $\eta|_M$.

The closed $1$-form $\omega$ defines a foliation on $M$ whose leaves lifted to $\mathbb R^3$
may or may not be open and have an asymptotic direction. In 1982 S.\,Novikov suggested
to study their behavior in connection with the theory of conductivity in normal metals~\cite{n82}.

The forms $\omega$ that arise in this situation are not generic since $\int_c\omega=0$ whenever
$c\in H_1(M,\mathbb Z)$ has zero image in $H_1(\mathbb T^3,\mathbb Z)$ under
the embedding $M\hookrightarrow\mathbb T^3$. Locally these are the only restrictions,
so, the problem can be reduced to studying interval exchange transformations
with the restriction space $\mathscr R(i)$ generated by all cycles $c\in H_1(\Sigma,\mathbb Z)$
such that $i_*(c)=0$, where $\Sigma$ is the suspension surface and $i:\Sigma\hookrightarrow\mathbb T^3$
is some embedding.

Note, however, that it is not clear whether all interval exchange transformations obtained in this way
can come from the restriction of a $1$-form on $\mathbb T^3$ with \emph{constant} coefficients
to a surface. So, the family of transformations that can arise in Novikov's problem is probably a
proper part of
the family of all interval exchange transformations with restrictions of the form $\mathscr R(i)$,
but it has the same number of degrees of freedom.

\begin{prop}
If $i_*([\Sigma])=0\in H_2(\mathbb T^3,\mathbb Z)$, then the restriction space $\mathscr R(i)$
is rich.
\end{prop}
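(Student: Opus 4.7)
The plan is to interpret the richness condition via Poincaré duality on $\Sigma$ and then use the hypothesis $i_*[\Sigma]=0$ to produce an isotropic complement to $\mathscr R(i)$.

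First I would reduce to an absolute-homology statement. Since $\mathscr R(i)$ is by definition generated by absolute cycles, it lies in $H_1(\Sigma;\mathbb R)=\mathrm{Im}\,\varphi$, so $\mathscr R(i)\cap\mathrm{Im}\,\varphi=\mathscr R(i)$. The condition for richness therefore reduces to showing that $\mathscr R(i)$ is coisotropic in $H_1(\Sigma;\mathbb R)$ with respect to the intersection form $\frown$. Equivalently, writing $\perp$ for the symplectic orthogonal in $H_1(\Sigma;\mathbb R)$, I need $\mathscr R(i)^{\perp}\subset\mathscr R(i)$.

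Second I would translate this to a cohomological statement. Under the canonical pairing $H_1(\Sigma;\mathbb R)\times H^1(\Sigma;\mathbb R)\to\mathbb R$ one has $\mathscr R(i)=\ker i_{*}=\mathrm{Ann}(\mathrm{Im}\,i^{*})$, where $i^{*}:H^1(\mathbb T^3;\mathbb R)\to H^1(\Sigma;\mathbb R)$ is the pullback. Now the key computation is that for any $\omega_1,\omega_2\in H^1(\mathbb T^3;\mathbb R)$
\begin{equation*}
\int_{\Sigma}i^{*}\omega_1\wedge i^{*}\omega_2
=\bigl\langle[\Sigma],\,i^{*}(\omega_1\cup\omega_2)\bigr\rangle
=\bigl\langle i_{*}[\Sigma],\,\omega_1\cup\omega_2\bigr\rangle
=0,
\end{equation*}
using $i_{*}[\Sigma]=0$. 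Hence $\mathrm{Im}\,i^{*}\subset H^1(\Sigma;\mathbb R)$ is isotropic with respect to the cup product.

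Third I would transport this back via Poincaré duality on $\Sigma$. Let $V:=\mathrm{PD}(\mathrm{Im}\,i^{*})\subset H_1(\Sigma;\mathbb R)$. Since Poincaré duality on the closed oriented surface $\Sigma$ intertwines the cup product with the intersection form, $V$ is isotropic: $V\subset V^{\perp}$. Moreover, because PD identifies the annihilator pairing between $H^1$ and $H_1$ with the intersection form on $H_1$, the annihilator $\mathrm{Ann}(\mathrm{Im}\,i^{*})$ corresponds to $V^{\perp}$; that is, $\mathscr R(i)=V^{\perp}$. Applying $\perp$ once more and using $(V^{\perp})^{\perp}=V$, we get $\mathscr R(i)^{\perp}=V\subset V^{\perp}=\mathscr R(i)$, which is exactly the coisotropy of $\mathscr R(i)$. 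By Definition of richness this completes the proof.

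There is no real obstacle here: the substantive content is contained in the one-line computation that uses $i_{*}[\Sigma]=0$. The only care needed is bookkeeping with the identifications (the isomorphism $\mathrm{Im}\,\varphi\cong H_1(\Sigma;\mathbb R)$, the Poincaré duality intertwiner between $\smile$ and $\frown$, and the fact that $\mathscr R(i)$ sits entirely in absolute homology so that the richness test reduces to coisotropy of $\mathscr R(i)$ itself rather than its intersection with $\mathrm{Im}\,\varphi$).
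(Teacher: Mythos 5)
Your proposal is correct and rests on the same key step as the paper's proof: the vanishing of $\int_\Sigma i^*\eta_1\wedge i^*\eta_2$ for classes pulled back from $\mathbb T^3$, which follows from $i_*[\Sigma]=0$. The only difference is bookkeeping: the paper verifies richness via condition (2) (isotropy of $\mathrm{Ann}(\mathscr R(i))$ for the wedge pairing), while you verify the equivalent condition (1) (coisotropy of $\mathscr R(i)=V^\perp$ in $H_1(\Sigma;\mathbb R)$) by transporting the same computation through Poincar\'e duality, so the argument is essentially the paper's.
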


\begin{proof}
Let $\omega_1$ and $\omega_2$ be two closed $1$-forms on $\Sigma$ whose homology
classes satisfy all restrictions from~$\mathscr R(i)$. Then there are closed $1$-forms $\eta_1$, $\eta_2$
on $\mathbb T^3$ such that $\omega_j=i^*\eta_j$, $j=1,2$. We have
$$\langle\omega_1,\omega_2\rangle=\int_\Sigma\omega_1\wedge\omega_2=
\int_\Sigma i^*(\eta_1\wedge\eta_2)=\int_\Sigma\eta_1\wedge\eta_2=0$$
since $i_*[\Sigma]=0$.

We see that $\mathrm{Ann}(\mathscr R(i))$ is isotropic, so $\mathscr R(i)$ is rich. 
\end{proof}

A.\,Zorich's result~\cite{z84} can be interpreted in terms of interval exchange transformations
as follows: minimal foliations arising in Novikov's problem are never $\mathscr R(i)$-stable.
It was shown later in~\cite{d93} that almost all foliations in Novikov's problem
are \emph{not} minimal. Moreover, all minimal foliations reside in a codimension one subset $X$
of the set of all relevant foliations.

The subset $X$ is defined (locally) by another restriction, and if we add this
restriction to the corresponding $\mathscr R(i)$ the question of minimality becomes highly nontrivial.
It is conjectured (in different terms) that almost all foliations from $X$ are \emph{not}
minimal \cite{MN}.

In the genus $3$ case, a \remove{two-parametric}
\newtext{two parameter} subfamily of $X$ is studied in \cite{dd08} and it is shown that
minimal foliations in $X$ form a subset known as the Rauzy gasket.

\subsection{Systems of partial isometries}\label{spi}
\begin{defi}
By \emph{a system of partial isometries} we mean a collection $\Psi=\{\psi_1,\ldots,\psi_k\}$
of interval isometries $\psi_i:[a_i,b_i]\rightarrow[c_i,d_i]$, where $[a_i,b_i]$ and $[c_i,d_i]$
are subintervals of $[0,1]$ of equal length for each $i=1,\ldots,k$.
It is required that $\{a_1,c_1,\ldots,a_k,c_k\}$ contains $0$ and
$\{b_1,d_1,\ldots,b_k,d_k\}$ contains $1$. The isometries $\psi_i$
can preserve or reverse orientation. The system is called \emph{orientation preserving}
if all $\psi_i$'s preserve orientation.

Two systems of partial isometries obtained from each other by permuting $\psi_i$'s and
replacing some $\psi_i$ by $\psi_i^{-1}$ are considered equal.

\emph{The $\Psi$-orbit of a point $x\in[0,1]$} is the set
of images of $x$ under all compositions $\psi_{i_1}^{\pm1}\circ\ldots\circ\psi_{i_m}^{\pm1}$,
$m=0,1,2,\ldots$, that are defined at $x$.
\end{defi}

With every system of partial isometries $\Psi$ one associates a foliated $2$-dimensional complex called
\emph{the suspension complex of $\Psi$} and denoted here $\Sigma_\Psi$, see~\cite{glp94}.
Such foliated complexes are particular cases of \emph{band complexes} that
were studied in connection with geometric group theory and
the theory of $\mathbb R$-trees~\cite{bf95}, \cite{glp94}.

Minimal band complexes (the notion of minimality is slightly
more subtle here but for $\Sigma_\Psi$, outside of a codimension one subset of such complexes, it is equivalent
to saying that orbits of $\Psi$ are everywhere dense) are classified by E.\,Rips into toral type, surface type,
and thin type~\cite{bf95}.

Oriented systems of partial isometries can be viewed as a generalization of interval exchange transformations
so that interval exchange transformations will appear as systems of partial isometries satisfying certain integral linear restrictions
on the parameters.
There is, however, another relation between these objects that
associates an interval exchange transformation satisfying a number of restrictions
to any orientation preserving system of partial isometries.

\begin{defi}\label{doublesusp}
Let $\Psi=\{\psi_i:[a_i,b_i]\rightarrow[c_i,d_i]\;;\;i=1,\ldots,k\}$ be an orientation preserving system of partial isometries and
$\mathbf y=(y_1,y_2,\ldots,y_{2k})$ a $2k$-tuple of pairwise distinct points of $(0,1)$. Such a $2k$-tuple
will be referred to as \emph{admissible for $\Psi$}.
The surface $\widehat\Sigma_{\Psi,\mathbf y}$ that is constructed below will be called
\emph{a double suspension surface of $\Psi$}.

Choose $\varepsilon>0$ so small that the intervals $[y_i,y_i+\varepsilon]$ are pairwise disjoint and contained in $[0,1)$.
The surface $\widehat\Sigma_{\Psi,\mathbf y}$ is obtained from
$$D=[0,1]\times[0,1]\setminus\bigcup_{i=1}^k\bigl((a_i,b_i)\times(y_i,y_i+\varepsilon)\cup(c_i,d_i)\times(y_{i+k},y_{i+k}+\varepsilon)\bigr)$$
by making the following, orientation and measure preserving, identifications:
$$\begin{aligned}\relax
[0,1]\times\{0\}&\text{ with }[0,1]\times\{1\},\\
[a_i,b_i]\times\{y_i\}&\text{ with }[c_i,d_i]\times\{y_{i+k}+\varepsilon\},\\
[a_i,b_i]\times\{y_i+\varepsilon\}&\text{ with }[c_i,d_i]\times\{y_{i+k}\}
\end{aligned}$$
and collapsing to a point every straight line segment in the boundary of the domain~$D$, namely,
$\{0\}\times[0,1]$, $\{1\}\times[0,1]$, $a_i\times[y_i,y_i+\varepsilon]$, $b_i\times[y_i,y_i+\varepsilon]$,
$c_i\times[y_{i+k},y_{i+k}+\varepsilon]$, and $d_i\times[y_{i+k}+\varepsilon]$, $i=1,\ldots,k$.
\end{defi}

The surface $\widehat\Sigma_{\Psi,\mathbf y}$ comes with a closed differential $1$-form $\omega$
whose pullback in $D$ is $dx$ (a smooth structure on $\widehat\Sigma_{\Psi,\mathbf y}$ can
be chosen so that $\omega$ is smooth). The $1$-form $\omega$ defines a measured
orientable foliation $\widehat{\mathcal F}_{\Psi,\mathbf y}$,
whose first return map on the transversal $\gamma=[0,1]\times\{0\}\sim[0,1]\times\{1\}$ is an interval exchange.
We denote this transformation by $T_{\Psi,\mathbf y}$.

It is not, however, necessarily true that all leaves of the foliation~$\widehat{\mathcal F}_{\Psi,\mathbf y}$ meet $\gamma$.
If it is we say that the transformation~$T_{\Psi,\mathbf y}$ \emph{fills} the surface $\widehat\Sigma_{\Psi,\mathbf y}$.
If it is not, then the foliation~$\widehat{\mathcal F}_{\Psi,\mathbf y}$ is not minimal.

If $T_{\Psi,\mathbf y}$ fills~$\widehat\Sigma_{\Psi,\mathbf y}$, then the minimality of~$\widehat{\mathcal F}_{\Psi,\mathbf y}$ is equivalent to
that of $T_{\Psi,\mathbf y}$.
Indeed, in this case, the foliated
surface~$\widehat\Sigma_{\Psi,\mathbf y}$
can be identified with the suspension surface of $T_{\Psi,\mathbf y}$, possibly after collapsing to a point
some saddle connections. Such collapsing will not be essential in the sequel, so we will
think of~$\widehat\Sigma_{\Psi,\mathbf y}$ as the suspension surface for~$T_{\Psi,\mathbf y}$ provided
that the latter fills~$\widehat\Sigma_{\Psi,\mathbf y}$.

\begin{prop}\label{spi-rich}
If $T_{\Psi,\mathbf y}$ fills $\widehat\Sigma_{\Psi,\mathbf y}$, then
the full restriction space $\mathscr R(T_{\Psi,\mathbf y})$ is rich.
\end{prop}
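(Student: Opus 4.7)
The plan is to verify the definition of richness directly by exhibiting a Lagrangian subspace of $H_1(\widehat\Sigma_{\Psi,\mathbf y};\mathbb R)$, with respect to the intersection form, contained in $\mathscr R(T_{\Psi,\mathbf y})\cap H_1(\widehat\Sigma_{\Psi,\mathbf y};\mathbb R)$. By the characterizations in Subsection~\ref{rich-and-poor}, richness of $\mathscr R(T_{\Psi,\mathbf y})$ is equivalent to $\mathscr R(T_{\Psi,\mathbf y})\cap H_1(\widehat\Sigma_{\Psi,\mathbf y};\mathbb R)$ being coisotropic in the symplectic space $H_1(\widehat\Sigma_{\Psi,\mathbf y};\mathbb R)$, and coisotropy is equivalent to containing a Lagrangian subspace.

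The first step is to compute the topology of $\widehat\Sigma_{\Psi,\mathbf y}$. Starting from $[0,1]^2$, the identification $[0,1]\times\{0\}\sim[0,1]\times\{1\}$ produces a cylinder, and collapsing the two vertical sides $\{0\}\times[0,1]$ and $\{1\}\times[0,1]$ to points yields a sphere $S^2$. The domain~$D$ from Definition~\ref{doublesusp} is this sphere with $2k$ open disks removed, once the inner vertical segments $\{a_i\}\times[y_i,y_i+\varepsilon]$, $\{b_i\}\times[y_i,y_i+\varepsilon]$, etc.\ are collapsed to points so that each removed rectangle's boundary becomes a topological circle. For each $i=1,\ldots,k$, the identifications of the horizontal edges $[a_i,b_i]\times\{y_i\},[a_i,b_i]\times\{y_i+\varepsilon\}$ with $[c_i,d_i]\times\{y_{i+k}+\varepsilon\},[c_i,d_i]\times\{y_{i+k}\}$ sew two boundary circles together, which is topologically equivalent to attaching a cylinder between them and hence adds one handle. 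The filling hypothesis ensures the resulting closed surface is connected, so $\widehat\Sigma_{\Psi,\mathbf y}$ has genus exactly $k$ and $\dim_{\mathbb R}H_1(\widehat\Sigma_{\Psi,\mathbf y};\mathbb R)=2k$.

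The second step is to exhibit $k$ cycles $\mu_1,\ldots,\mu_k$ generating a Lagrangian subspace inside $\mathscr R(T_{\Psi,\mathbf y})\cap H_1(\widehat\Sigma_{\Psi,\mathbf y};\mathbb R)$. Take $\mu_i$ to be a small rectangular loop in the smooth part of $\widehat\Sigma_{\Psi,\mathbf y}$ enclosing the first removed rectangle of the $i$-th pair, for instance the boundary of $[a_i-\delta,b_i+\delta]\times[y_i-\delta,y_i+\varepsilon+\delta]$ for small $\delta>0$. In this annular region the form $\omega=dx$ is exact, and the two horizontal pieces of $\mu_i$ contribute equal and opposite integrals while the two vertical pieces contribute zero; hence $\int_{\mu_i}\omega=0$ identically in the parameters, so $\mu_i$ represents a restriction. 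As meridians of distinct handles, $\mu_1,\ldots,\mu_k$ are linearly independent in $H_1(\widehat\Sigma_{\Psi,\mathbf y};\mathbb R)$; being pairwise disjoint (the $2k$ removed rectangles sit at distinct $y$-levels once $\varepsilon$ is small enough), they satisfy $\mu_i\frown\mu_j=0$ for all $i,j$, so they span a $k$-dimensional isotropic subspace of a symplectic space of dimension $2k$, i.e.\ a Lagrangian.

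The main obstacle I expect is pinning down the topology cleanly in full generality. The endpoints $a_i,b_i,c_i,d_i$ may coincide with each other or with $0$ and $1$, and $[a_i,b_i]$ and $[c_i,d_i]$ may overlap in complicated ways, so one must verify that the combinatorial description still yields a connected genus-$k$ surface and that the meridians survive as independent nontrivial classes. A careful Euler-characteristic bookkeeping, together with using the ``filling'' hypothesis precisely to rule out degenerate disconnected or pinched configurations, should suffice.
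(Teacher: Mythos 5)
Your strategy (exhibit an explicit Lagrangian of zero-period cycles inside $\mathscr R(T_{\Psi,\mathbf y})\cap H_1(\widehat\Sigma_{\Psi,\mathbf y};\mathbb R)$) is legitimate, but it is not the paper's route, and the difference matters. The paper realizes $\widehat\Sigma_{\Psi,\mathbf y}$ as the boundary of a handlebody $H_{\Psi,\mathbf y}$ built from $[0,1]^3$, to which $\omega$ extends as a closed $1$-form, and then gets richness for free from $H^2(H_{\Psi,\mathbf y})=0$: for any two admissible forms $\omega_1\wedge\omega_2$ is exact, so $\int_{\partial H}\omega_1\wedge\omega_2=0$, i.e.\ $\mathrm{Ann}(\mathscr R)$ is isotropic. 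That argument never needs the genus of $\widehat\Sigma_{\Psi,\mathbf y}$, nor independence or disjointness of any explicit cycles. Your meridians $\mu_i$ are exactly curves bounding meridian disks of that handlebody, so the two proofs exhibit the same Lagrangian (the kernel of $H_1(\partial H)\to H_1(H)$); yours does it by hand and therefore has to pay for all the bookkeeping the handlebody trick avoids.

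And that bookkeeping is where the genuine gap sits, because the configurations you set aside as degenerate are in fact unavoidable: by the definition of a system of partial isometries some $a_i$ or $c_i$ equals $0$ and some $b_j$ or $d_j$ equals $1$, so at least one removed rectangle always abuts each of the collapsed sides $\{0\}\times[0,1]$ and $\{1\}\times[0,1]$. For such an $i$ your literal $\mu_i=\partial\bigl([a_i-\delta,b_i+\delta]\times[y_i-\delta,y_i+\varepsilon+\delta]\bigr)$ does not exist inside the square, and the corresponding meridians of different handles all pass through the single point to which the collapsed side maps; "pairwise disjoint because the rectangles sit at distinct $y$-levels" is then not an argument. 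One must check that at that singular point the two rays of each meridian are adjacent in the link (they are: the two glued horizontal edges of the $i$-th pair are consecutive around the collapsed left segment of the partner slit), so that disjoint push-offs with zero algebraic intersection exist, and one must actually carry out the Euler-characteristic count with the collapsings included to get $\chi=2-2k$ and hence that your $k$ independent isotropic classes really form a Lagrangian (this does work out, except for trivial systems containing an identity isometry, but it is not automatic). Finally, the filling hypothesis is not what rules out these configurations, nor is it about connectivity (the quotient is always connected); it is what lets you identify $\widehat\Sigma_{\Psi,\mathbf y}$ with the suspension surface of $T_{\Psi,\mathbf y}$, i.e.\ what makes your zero-period classes restrictions of that interval exchange transformation at all. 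So the proposal is a workable alternative proof, but as written the key step you deferred is a real gap, and it is precisely the step the paper's handlebody argument is designed to bypass.
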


\begin{proof}
The point is that $\widehat\Sigma_{\Psi,\mathbf y}$ can be realized as the boundary
of a handlebody $H_{\Psi,\mathbf y}$ such that $\omega$ is continued to
$H_{\Psi,\mathbf y}$ as a closed $1$-form. The handlebody is obtained from $[0,1]^3$
by the following identifications:
\begin{enumerate}
\item each of the two squares $\{0\}\times[0,1]\times[0,1]$ and $\{1\}\times[0,1]\times[0,1]$
is collapsed to a point;
\item for any $x\in(0,1)$ the straight line segment $[(x,0,0),(x,1,0)]$ is collapsed to a point;
\item the square $[0,1]\times\{0\}\times[0,1]$ is identified with $[0,1]\times\{1\}\times[0,1]$
by the projection along the second axis;
\item for each $i=1,\ldots,k$ the rectangle $[a_i,b_i]\times[y_i,y_i+\varepsilon]\times\{1\}$
is identified with $[c_i,d_i]\times[y_{i+k},y_{i+k}+\varepsilon]\times\{1\}$ by the map
$(x,y,1)\mapsto(x+c_i-a_i,y_{i+k}+\varepsilon-y_i-y,1)$;
\item
collapse each straight line segment $\{a_i\}\times[y_i,y_i+\varepsilon]\times\{1\}$ and $\{b_i\}\times[y_i,y_i+\varepsilon]\times\{1\}$
to a point, $i=1,\ldots,k$.
\end{enumerate}
One can see that this produces a handlebody whose boundary is obtained from $D\times\{1\}$ by
the same identifications as in Definition~\ref{doublesusp}.

Now, it is a general fact that any closed $1$-form on a handlebody $H$ restricted to $\partial H$ satisfies
a rich set of restrictions, which always contains the kernel of the map $H_1(\partial H,\mathbb Z)\rightarrow H_1(H,\mathbb Z)$
induced by the inclusion $\partial H\hookrightarrow H$. Indeed, for any two such forms $\omega_1$, $\omega_2$ the product
$\omega_1\wedge\omega_2$ is exact as $H^2(H)=0$. Therefore, $\int_{\partial H}\omega_1\wedge\omega_2=0$.
\end{proof}

So, according to Conjecture~\ref{measurezero} we expect that interval exchange transformations coming from systems
of partial isometries and the double suspension surface construction will typically be non-minimal if they
happen to fill the double suspension surface.

With every band complex $\Sigma$ Bestvina and Feighn associate a number $e(\Sigma)$ called \emph{the excess of $\Sigma$},
see~\cite{bf95} for the definition. For any $\Sigma$, the Rips machine generates a sequence $\Sigma_0=\Sigma,\Sigma_1,\Sigma_2,\ldots$
of band complexes such that $e(\Sigma_{i+1})\leqslant e(\Sigma_i)$ for all $i$. Moreover, for some $n$, we have
$e(\Sigma_n)=e(\Sigma_{n+i})$ for all~$i\geqslant0$. We call $e(\Sigma_n)$ \emph{the asymptotic excess of $\Sigma$}.

In the case when $\Psi=\{\psi_i:[a_i,b_i]\rightarrow[c_i,d_i]\;;\;i=1,\ldots,k\}$ is an orientation preserving system of
partial isometries such that the differences $c_i-a_i$, $i=1,\ldots k$, are independent over $\mathbb Q$
the excess of the suspension complex is simply
$$e(\Sigma_\Psi)=\sum_{i=1}^k(b_i-a_i)-1,$$
and it coincides with the asymptotic excess of $\Sigma_\Psi$.

In any case, the asymptotic excess of $\Sigma_\Psi$ is equal to a non-trivial integral linear combination
of the parameters $a_i,b_i,c_i,d_i$, $i=1,\ldots,k$ (by definition one of $a_i$'s is zero,
so it is understood that this $a_i$ is excluded from consideration when we speak about
dependencies and linear combinations of the parameters). So, if the parameters are rationally independent
the asymptotic excess is not zero. We think that this implies non-minimality of $\widehat{\mathcal F}_{\Psi,\mathbf y}$
for any $\mathbf y$.

\begin{conj}
Let $\Psi$ be an orientation preserving system of partial isometries.
If $\Sigma_\Psi$ has non-zero asymptotic excess, then, for any
admissible $\mathbf y$, the corresponding
interval exchange transformation $T_{\Psi,\mathbf y}$ either is non-minimal or
does not fill~$\widehat\Sigma_{\Psi,\mathbf y}$.
\end{conj}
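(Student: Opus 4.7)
The plan is to argue by contrapositive: assume $T_{\Psi,\mathbf y}$ fills $\widehat\Sigma_{\Psi,\mathbf y}$ and is minimal, and show that the asymptotic excess of~$\Sigma_\Psi$ must vanish. By Proposition~\ref{spi-rich}, the filling hypothesis alone implies that $\mathscr R(T_{\Psi,\mathbf y})$ is rich, and therefore by Proposition~\ref{saf=0} we have $\saf(T_{\Psi,\mathbf y})=0$. The proof then reduces to relating this vanishing to the asymptotic excess of $\Sigma_\Psi$.

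The first step is to express $\saf(T_{\Psi,\mathbf y})$ intrinsically in terms of~$\Psi$. Changing the admissible vector~$\mathbf y$ or the auxiliary width~$\varepsilon$ in Definition~\ref{doublesusp} corresponds to changing the transversal used to define~$T_{\Psi,\mathbf y}$, hence to a finite sequence of Rauzy inductions on the resulting IET; since $\saf$ is Rauzy invariant, the resulting quantity depends only on~$\Psi$, not on $\mathbf y$ or $\varepsilon$. Next, I would verify that this intrinsic invariant is also preserved by the Rips machine moves on~$\Sigma_\Psi$: each such move amounts to subtracting an overlap from one of the intervals $[a_i,b_i]$ or $[c_i,d_i]$, which has exactly the same effect on the induced IET as a Rauzy move, and the invariance calculation is essentially the same one. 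Consequently, the SAF invariant is computable from the Rips-stable tail $\Sigma_n, \Sigma_{n+1}, \ldots$ of the sequence.

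The third step, which is where the main difficulty lies, is to show that the intrinsic invariant is non-zero whenever the asymptotic excess is non-zero. Here I would invoke the Bestvina--Feighn and Gaboriau--Levitt--Paulin classification: once the Rips machine has stabilised, $\Sigma_\Psi$ splits into surface, toral, and thin components, and only the surface and toral components contribute to $\saf$ in a controlled, ``geometric'' way. A thin component, by contrast, produces wedge summands in which the asymptotic excess $e(\Sigma_n)$ itself pairs non-trivially with transcendentally independent length parameters. In the case of rationally independent parameters this is transparent: the excess is the manifestly non-trivial integral linear combination $\sum_i(b_i-a_i)-1$, and one computes directly a non-zero wedge contribution in $\mathbb R\wedge_{\mathbb Q}\mathbb R$. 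The rationally dependent case would then follow by a stability-and-density argument, invoking Proposition~\ref{zorich-general} to perturb to the rationally independent situation while preserving both the filling property and the non-vanishing of the asymptotic excess.

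The essential obstacle is the third step: carrying out the SAF computation on thin components requires a finer analysis of the Rips machine than is developed in this paper, and in particular a precise understanding of how the Lyapunov-type linear algebra controlling the thin dynamics (as in Levitt's original work) interacts with the $\mathbb Q$-wedge structure of the SAF invariant. A secondary difficulty is the case in which $T_{\Psi,\mathbf y}$ is minimal but not uniquely ergodic: although Propositions~\ref{spi-rich} and~\ref{saf=0} do not require unique ergodicity, handling non-uniquely ergodic boundary situations in the density argument at the end of step three may require replacing Proposition~\ref{zorich-general} by a suitable variant.
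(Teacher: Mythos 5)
This statement is a \emph{conjecture} in the paper: the authors do not prove it. They only observe the easy cases (negative asymptotic excess forces compact leaves, hence non-minimality; positive excess with $\Sigma_\Psi$ non-minimal is also easy) and settle a single special case, $k=3$ with $y_1<\dots<y_6$, by referring to the realization in the $3$-torus from~\cite{d08}. The open part is precisely the toral case with positive excess, which your argument would need to handle.

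Your proposal has a fatal gap at the third step, and it is visible already from the paper's own material. As you note, filling alone gives richness of $\mathscr R(T_{\Psi,\mathbf y})$ (Proposition~\ref{spi-rich}) and hence $\saf(T_{\Psi,\mathbf y})=0$ (Proposition~\ref{saf=0}); minimality plays no role in this deduction. So if your step~3 (``nonzero asymptotic excess $\Rightarrow$ the intrinsic SAF-type invariant is nonzero'') were true, you would conclude that nonzero excess implies $T_{\Psi,\mathbf y}$ never fills $\widehat\Sigma_{\Psi,\mathbf y}$ --- a statement strictly stronger than the conjecture and false: in the explicit $k=3$ family of Subsection~\ref{spi} the paper states that $T_{\Psi,\mathbf y}$ \emph{fills} the double suspension surface for all parameters in the given range, including those with $e(\Psi)=b_1+b_2+b_3-1\ne0$ (the conclusion there is non-minimality, not non-filling). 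Hence in those examples $\saf(T_{\Psi,\mathbf y})=0$ while the excess is nonzero, so the SAF invariant simply cannot detect the excess, and the bridge your whole argument rests on collapses. The structural symptom is that your contrapositive never uses minimality after the first line. (The earlier steps are also shakier than you suggest --- changing $\mathbf y$ or $\varepsilon$ is not literally a sequence of Rauzy inductions, and Rips moves act on $\Sigma_\Psi$, not on the IET of the double suspension --- but these are secondary: the real difficulty, distinguishing thin from toral behavior when the excess is positive, is exactly what remains open and is not addressed by an SAF computation.)
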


One half of this conjecture is easy: if the asymptotic excess of $\Sigma_\Psi$ is negative,
then $\Sigma_\Psi$ has compact leaves (equivalently, $\Psi$ has finite orbits), which immediately
implies that $\widehat{\mathcal F}_{\Psi,\mathbf y}$ has compact leaves for any admissible $\mathbf y$, and thus, is not minimal.

If the asymptotic excess is positive, then $\Sigma_\Psi$ is either not minimal (and then
the conjecture also follows easily) or of toral type. So, it remains to establish
the conjecture in the toral case. So far we can do this in the case $k=3$ for a special choice of $\mathbf y$.

\begin{prop}
Let $\Psi=\{\psi_i:[a_i,b_i]\rightarrow[c_i,d_i]\;;\;i=1,2,3\}$ be an orientation preserving system of
partial isometries and let $y_1<y_2<\ldots<y_6$. Then the
inequality $\sum_{i=1}^k(b_i-a_i)>1$ (which holds whenever the asymptotic excess of $\Sigma_\Psi$
is positive) implies that~$\widehat{\mathcal F}_{\Psi,\mathbf y}$ is not minimal.
\end{prop}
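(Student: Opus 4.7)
The plan is to exhibit either a closed regular vertical leaf or a vertical saddle connection of $\widehat{\mathcal F}_{\Psi,\mathbf y}$, which by Imanishi's theorem (as invoked in the proof of Theorem~\ref{stabilityofminimal}) suffices to conclude non-minimality.

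First, I would exploit the ordering $y_1<y_2<\cdots<y_6$ to set up a combinatorial picture of the surface. The square $D$ splits into six horizontal strips separated by the slit heights, and after identifying $y=0$ with $y=1$ these become six slabs $S_0,S_1,\ldots,S_5$, with $S_0$ the wrap-around slab. Since the source slits all sit below all target slits, the slit identifications take a rigid form: a direct computation shows that the ``slit adjacency'' on the slabs consists of exactly two vertex-disjoint triangles, $S_0\leftrightarrow S_4\leftrightarrow S_2\leftrightarrow S_0$ and $S_1\leftrightarrow S_5\leftrightarrow S_3\leftrightarrow S_1$. Consequently, every vertical leaf alternates between two kinds of moves: a ``gap'' move taking it from $S_i$ to the consecutive slab $S_{i\pm1}$, and a ``teleport'' move taking it from some $S_i$ to the next slab within one of the two triangles.

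Next, I would apply the hypothesis $\sum_i(b_i-a_i)>1$. Since $b_i-a_i=d_i-c_i$ for each $i$, both the source intervals $[a_i,b_i]$ and the target intervals $[c_i,d_i]$ have total length strictly greater than one inside $[0,1]$. Pigeonhole then produces, on each side, a pair of indices $i<j$ whose intervals overlap on a set of positive measure. This overlap, combined with the rigid ordering of the $y_i$, forces a strong itinerary constraint on the first-return map to the middle transversal $\eta=[0,1]\times\{y_0\}$ with $y_3<y_0<y_4$.

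Using these two ingredients, the third step is to trace separatrices emanating from the slit corners through the itineraries of Step~1. Because each separatrix is forced to follow a prescribed sequence of alternating gap and teleport moves, and because the overlap provided by Step~2 forces certain $x$-coordinates to lie in several slit intervals simultaneously, a separatrix must hit another slit corner after only finitely many moves, yielding a saddle connection; equivalently, one exhibits an $x$ for which the cycle within the triangle $\{S_0,S_2,S_4\}$ closes up, giving a compact regular leaf. The main obstacle is the case analysis: depending on which pair of source (and target) intervals overlaps and where, different separatrices are forced to connect, and unifying these cases into a single clean argument---or replacing them by a homological argument based on the richness of $\mathscr R(T_{\Psi,\mathbf y})$ established in Proposition~\ref{spi-rich}---will require a delicate combinatorial analysis of the two-triangle slab structure.
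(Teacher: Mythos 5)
Your Step~3 is not a proof of anything: it is a restatement of the proposition. The only place where the hypothesis $\sum_i(b_i-a_i)>1$ enters your sketch is the pigeonhole observation that two source (and two target) intervals overlap, and that observation is toothless: such overlaps occur just as well when $\sum_i(b_i-a_i)\leqslant 1$ (take widths $0.45,0.45,0.05$), and in the zero-excess case the foliation $\widehat{\mathcal F}_{\Psi,\mathbf y}$ \emph{can} be minimal --- the Rauzy-gasket family of Example~\ref{rauzy} is exactly of this shape. Hence no argument using only ``some slit intervals overlap'' plus the slab combinatorics can force a separatrix to hit a corner, or a leaf to close up, after finitely many moves; any correct proof must use positivity of the excess quantitatively. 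The genuinely hard case is when $\Psi$ is of toral type with all $\Psi$-orbits dense: there is then no obstruction visible at any finite stage of tracing separatrices, and one needs a renormalization or geometric input. This is precisely what the paper supplies by a completely different route: it realizes $\Sigma_\Psi$ and $\widehat\Sigma_{\Psi,\mathbf y}$ inside $\mathbb T^3$ so that both foliations are induced by parallel plane sections, and extracts the statement from the structure results of \cite{d08}; your combinatorial plan would have to reprove that material, and as written it does not. Your proposed fallback, ``a homological argument based on the richness of $\mathscr R(T_{\Psi,\mathbf y})$ from Proposition~\ref{spi-rich}'', cannot close the gap either: richness of the restriction space does not imply non-minimality of an individual transformation (again Example~\ref{rauzy}); in the paper it only feeds Conjectures~\ref{unstable-conj} and~\ref{measurezero}.

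Two secondary points. First, your non-minimality criterion is too weak: exhibiting one saddle connection does not contradict minimality in the sense used here; by the Imanishi criterion quoted in the proof of Theorem~\ref{stabilityofminimal} you need realized restrictions whose union disconnects the surface. (A compact \emph{regular} leaf does suffice, because for a foliation defined by a closed $1$-form closed leaves come in cylinders, so if you pursue this route you must produce a closed regular leaf or a disconnecting union of saddle connections, not just one saddle connection.) Second, the slab combinatorics is only partly right: the teleport moves do form the two $3$-cycles $S_0\to S_4\to S_2\to S_0$ and $S_1\to S_5\to S_3\to S_1$ for the upward flow, but leaves do not alternate gap and teleport moves --- consecutive teleports (e.g.\ $S_0\to S_4\to S_2$) occur whenever the image point lands in the next slit interval, and consecutive gap moves occur whenever it misses, so the ``forced itinerary'' you rely on does not exist.
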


A proof of this fact
can be extracted from \cite{d08} where $\Psi$ and the double suspension surface
are realized in the $3$-torus so that the foliations on both are induced by plane sections
of a fixed direction.

\begin{defi}
A system of partial isometries $\Psi=\{\psi_i:[a_i,b_i]\rightarrow[c_i,d_i]\;;\;i=1,\ldots,k\}$  will be called \emph{balanced} if
$\sum_{i=1}^k(b_i-a_i)=1$.
\end{defi}

\begin{prop}\label{balanced-thin}
Let $\Psi$ be a balanced and orientation preserving system of partial isometries, and $\mathbf y$ an admissible vector for $\Psi$.
If, in addition, $\Psi$ is of thin type, then
the transformation $T_{\Psi,\mathbf y}$ is minimal and fills $\widehat\Sigma_{\Psi,\mathbf y}$.
\end{prop}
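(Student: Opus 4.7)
The plan is to derive both claims simultaneously from the correspondence between the foliation $\widehat{\mathcal F}_{\Psi,\mathbf y}$ on $\widehat\Sigma_{\Psi,\mathbf y}$ and the foliated band complex $\Sigma_\Psi$, exploiting the Rips classification to rule out the pathologies that would break minimality or filling. First I would unwind Definition~\ref{doublesusp} to identify two kinds of transversals to $\widehat{\mathcal F}_{\Psi,\mathbf y}$: the distinguished curve $\gamma=[0,1]\times\{0\}$, and the horizontal ``band transversals'' $[a_i,b_i]\times\{y_i\}$ and $[c_i,d_i]\times\{y_{i+k}+\varepsilon\}$, which are glued pairwise by $\psi_i$ and $\psi_i^{-1}$. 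A vertical leaf then either crosses $\gamma$ (via the identification $y=1\sim y=0$) or passes through one of these gluing pairs, and the sequence of such crossings records precisely a $\Psi$-orbit together with its itinerary of visits to $\gamma$. The balanced hypothesis $\sum_i(b_i-a_i)=1$, via Fubini on $D$ with $\omega=dx$, is exactly what guarantees that the pushforward of the vertical flow preserves Lebesgue measure on every horizontal slice; this is what makes the first return map on $\gamma$ a genuine interval exchange $T_{\Psi,\mathbf y}$ rather than a partially defined or sub-probability contraction.

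For \emph{filling}, I argue by contradiction. Suppose a positive-measure set of leaves of $\widehat{\mathcal F}_{\Psi,\mathbf y}$ avoids $\gamma$; then their closure is a proper, foliation-saturated subsurface $X\subset\widehat\Sigma_{\Psi,\mathbf y}$ whose boundary is a union of saddle connections. The double suspension construction provides a collapse $\widehat\Sigma_{\Psi,\mathbf y}\to\Sigma_\Psi$ that sends the vertical foliation to the foliation of the band complex; the image of $X$ is a proper foliated subcomplex of $\Sigma_\Psi$ whose complement in the base has positive transverse measure. By the Bestvina--Feighn~\cite{bf95} structure theorem, thin type band complexes admit no such proper invariant subcomplex: the Rips machine applied to a thin complex cannot stabilize on one with disconnected support. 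This contradiction establishes filling.

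Once filling is known, $\widehat\Sigma_{\Psi,\mathbf y}$ may be identified (up to collapsing any saddle connections) with the suspension surface of $T_{\Psi,\mathbf y}$, and minimality of $T_{\Psi,\mathbf y}$ is equivalent to minimality of $\widehat{\mathcal F}_{\Psi,\mathbf y}$. A leaf of $\widehat{\mathcal F}_{\Psi,\mathbf y}$ projects under the same collapse to a $\Psi$-orbit traced through the base of $\Sigma_\Psi$; since $\Psi$ is thin, every $\Psi$-orbit is dense in $[0,1]$, and transversality of $\widehat{\mathcal F}_{\Psi,\mathbf y}$ to horizontal slices promotes this density to density of every regular leaf on all of $\widehat\Sigma_{\Psi,\mathbf y}$. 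Hence $T_{\Psi,\mathbf y}$ is minimal.

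The main obstacle is the filling step. Making the correspondence between a closed saturated subset $X\subset\widehat\Sigma_{\Psi,\mathbf y}$ and an invariant subcomplex of $\Sigma_\Psi$ rigorous requires careful bookkeeping across the various gluings and the collapsing of the vertical boundary segments $\{a_i\}\times[y_i,y_i+\varepsilon]$ etc.\ in Definition~\ref{doublesusp}; one must verify that the transverse measure of the projected subcomplex is positive (so that the invoked Bestvina--Feighn rigidity genuinely applies) and that saddle connections of $\widehat{\mathcal F}_{\Psi,\mathbf y}$ project to finite trees in $\Sigma_\Psi$ that cannot support a thin-type invariant piece. The rest of the argument is essentially a packaging of standard facts about suspensions of interval exchange transformations and of band complexes.
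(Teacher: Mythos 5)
Your argument has a genuine gap at its central step, and it is precisely the point where thin type must enter in an essential way. You use thin type only through two consequences: density of $\Psi$-orbits in $[0,1]$ and the absence of a proper invariant subcomplex. Both of these hold for \emph{any} minimal balanced band complex, including ones of toral type; but for those the conclusion of the proposition is false. Indeed, a finite-type interval translation mapping whose eventual image carries a minimal interval exchange is a balanced orientation preserving system (not of thin type) whose pseudogroup orbits are dense in $[0,1]$, and for it $\widehat{\mathcal F}_{\Psi,\mathbf y}$ is \emph{not} minimal --- it splits into two minimal halves (see the subsection on interval translation mappings) --- so $T_{\Psi,\mathbf y}$ cannot be both minimal and filling, since that pair of properties is jointly equivalent to minimality of $\widehat{\mathcal F}_{\Psi,\mathbf y}$. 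Hence your key inference ``every $\Psi$-orbit is dense, therefore every regular leaf of $\widehat{\mathcal F}_{\Psi,\mathbf y}$ is dense'' cannot be correct. The reason it fails is that a leaf of $\widehat{\mathcal F}_{\Psi,\mathbf y}$ is not a copy of a $\Psi$-orbit: the correct correspondence goes through the foliated handlebody $H_{\Psi,\mathbf y}$ from the proof of Proposition~\ref{spi-rich}, whose boundary is $\widehat\Sigma_{\Psi,\mathbf y}$ and into which $\Sigma_\Psi$ embeds leafwise, quasi-isometrically and by homotopy equivalences on leaves. A leaf of $\widehat{\mathcal F}_{\Psi,\mathbf y}$ is a \emph{boundary component} of the corresponding thickened leaf of $\Sigma_\Psi$; if that leaf is a tree with more than one end, its thickening has several (possibly infinitely many) boundary components, and a single component need not visit the whole orbit nor be dense in $\partial H_{\Psi,\mathbf y}$ even when the tree itself is dense in $\Sigma_\Psi$. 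This is exactly what thin type buys and what your proof never invokes: balancedness guarantees that in the minimal case all regular leaves of $\Sigma_\Psi$ are infinite trees, and thin type supplies a dense leaf that is a $1$-ended tree, so the corresponding boundary leaf is connected and dense in $\widehat\Sigma_{\Psi,\mathbf y}$, giving minimality of $\widehat{\mathcal F}_{\Psi,\mathbf y}$ and with it both claims at once.

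Two further points. The ``collapse'' $\widehat\Sigma_{\Psi,\mathbf y}\rightarrow\Sigma_\Psi$ carrying the vertical foliation to the band-complex foliation, on which both your filling argument and your minimality argument rest, is not supplied by the construction; what the construction supplies is the handlebody picture above, and any substitute map must confront the multiplicity of boundary components just described. Moreover, even granting such a map, your filling argument does not explain why the image of the saturated set avoiding $\gamma$ would be a \emph{proper} invariant subcomplex of positive transverse measure: $\gamma$ is far from being the full preimage of the base, so a leaf avoiding $\gamma$ may well map onto a leaf meeting the base. Finally, balancedness is not what makes the first return map on $\gamma$ a genuine interval exchange --- that holds for any admissible data because the foliation is defined by the closed $1$-form $dx$ and so is measure preserving; its role in the intended argument is the Euler-characteristic count ruling out non-simply-connected regular leaves in the minimal case, prior to the $1$-endedness argument.
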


\begin{proof}
We give only a sketch leaving details to the reader, who is assumed being familiar with
the theory of band complexes and the Rips machine.

As noted above, $T_{\Psi,\mathbf y}$ is minimal and fills $\widehat\Sigma_{\Psi,\mathbf y}$ if and only if
$\widehat{\mathcal F}_{\Psi,\mathbf y}$ is minimal.

There is a leafwise embedding of the suspension complex $\Sigma_\Psi$ into the foliated
handlebody $H_{\Psi,\mathbf y}$ introduced in the proof of Proposition~\ref{spi-rich}
such that, for any leaf $L$ of $\Sigma_\Psi$, the embedding of $L$
into the corresponding leaf of $H_{\Psi,\mathbf y}$ is a quasi-isometry and a homotopy equivalence.
The image of $\Sigma_\Psi$ in $H_{\Psi,\mathbf y}$ is the union of the straight line segment $[0,1]\times\{0\}\times\{0\}$
and the rectangles
$$\bigcup_{i=1}^k([a_i,b_i]\times\{y_i+\varepsilon/2\}\times[0,1]\cup[c_i,d_i]\times\{y_{i+k}+\varepsilon/2\}\times[0,1])$$
under the identifications from the proof of Proposition~\ref{spi-rich}.

Since $\Psi$ is balanced, the presence of compact leaves in $\Sigma_\Psi$ is equivalent
to the presence of regular leaves that are not simply connected. So, if $\Psi$ is minimal,
then all regular leaves of $\Sigma_\Psi$ are infinite trees.

If $\Psi$ is of thin type, then there is an everywhere dense leaf $L$ of $\Sigma_\Psi$ having form of a $1$-ended
infinite tree. The corresponding leaf of $H_{\Psi,\mathbf y}$ has connected boundary, which is everywhere dense
in $\partial H_{\Psi,\mathbf y}=\widehat\Sigma_{\Psi,\mathbf y}$. Therefore,
the foliation in $\widehat\Sigma_{\Psi,\mathbf y}$, and hence, the foliation~$\widehat{\mathcal F}_{\Psi,\mathbf y}$,
is minimal.
\end{proof}

Propositions~\ref{spi-rich} and \ref{balanced-thin} together with Conjecture~\ref{unstable-conj} (if proven)
imply that being of thin type is an unstable property of a balanced system of partial isometries. We will
prove that instability does occur in the case when
the balancedness is the only linear restriction on the parameters of $\Psi$.

\begin{prop}
Let $\Psi$ be a balanced and orientation preserving system of partial isometries with parameters that do
not satisfy any linear integral restriction except balancedness.
Let $\mathbf y$ be an admissible vector such that $T_{\Psi,\mathbf y}$
fills the double suspension surface $\widehat\Sigma_{\Psi,\mathbf y}$.
Then $T_{\Psi,\mathbf y}$ is not $\mathscr R(T_{\Psi,\mathbf y})$-stably
minimal.
\end{prop}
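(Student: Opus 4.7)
The plan is to produce, in every neighborhood of the parameter vector $\mathbf a$ of $T_{\Psi,\mathbf y}$, a perturbed vector $\mathbf a'$ with $\mathscr R(T_{\Psi,\mathbf a'})=\mathscr R:=\mathscr R(T_{\Psi,\mathbf y})$ and $T_{\Psi,\mathbf a'}$ non-minimal. Since $\mathscr R$-stable minimality presupposes minimality, we may assume $T_{\Psi,\mathbf y}$ is minimal. The idea is to perturb $\Psi$ itself within the stratum $B$ of balanced orientation-preserving systems of partial isometries with generic parameters (i.e.\ satisfying no integral linear relation beyond balancedness), producing a $\Psi'$ whose double suspension foliation $\widehat{\mathcal F}_{\Psi',\mathbf y}$ is not minimal.

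First I would argue that minimality of $T_{\Psi,\mathbf y}$, together with the balancedness of $\Psi$, forces the Rips--Bestvina--Feighn type of $\Sigma_\Psi$ to be thin or toral. In the surface-type case, generic leaves of $\Sigma_\Psi$ are non-compact and embedded as spines in $H_{\Psi,\mathbf y}$ via the map from the proof of Proposition~\ref{spi-rich}; each such leaf thickens to a band whose boundary in $\partial H_{\Psi,\mathbf y}=\widehat\Sigma_{\Psi,\mathbf y}$ has two distinct components, splitting $\widehat{\mathcal F}_{\Psi,\mathbf y}$ into two ergodic pieces and contradicting minimality. Next I would perturb $\Psi$ within $B$ (keeping the width constraints $d_i-c_i=b_i-a_i$ and balancedness, and preserving the genericity of the parameters) to a surface-type system $\Psi'$. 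This is possible because, within $B$, both thin-type and toral-type balanced systems form nowhere dense subsets: the toral locus is confined to rank-two systems (positive codimension), while the thin locus is a countable intersection of open conditions whose complement is open and dense, in the spirit of the Rauzy gasket of \cite{dd08,as11,ahs13}. For such a $\Psi'$ the foliation $\widehat{\mathcal F}_{\Psi',\mathbf y}$ is non-minimal by the first step, so $T_{\Psi',\mathbf y}$ is non-minimal; its parameter vector $\mathbf a'$ is close to $\mathbf a$ and satisfies $\mathscr R(T_{\Psi,\mathbf a'})=\mathscr R$ because the $\mathbb Q$-span of the IET parameters is unchanged by perturbations preserving combinatorics and genericity.

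The main obstacle is the meagerness claim for thin- and toral-type systems within the balanced stratum $B$. For small numbers of bands this follows from the explicit dimension estimates on the Rauzy gasket in \cite{ahs13}, but the general case requires a uniform Rips-machine analysis covering arbitrary band configurations, which would likely form the technical core of the proof. A secondary subtlety is the rigorous verification of the two-sheet argument for surface-type systems: one must verify that the two boundary components of a thickened leaf in $\partial H_{\Psi',\mathbf y}$ are not identified by any symmetry of the handlebody structure, so that the resulting disjoint ergodic pieces of $\widehat{\mathcal F}_{\Psi',\mathbf y}$ are genuinely distinct.
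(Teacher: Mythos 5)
Your reduction hinges on the claim that, inside the balanced stratum $B$, the thin-type and toral-type loci are nowhere dense, so that an arbitrarily small perturbation of $\Psi$ can be taken of surface type. This is the fatal gap: the claim is both unsubstantiated and wrong in the form you give it. Toral type is not confined to rank-two systems (e.g.\ three independent rotations give a toral band complex of higher rank), and the assertion that the complement of the thin locus is ``open and dense'' in $B$ is essentially the instability statement you are trying to prove, not an available ingredient; the Rauzy-gasket results of \cite{dd08,as11,ahs13} only control specific three-band families and, even there, what is generic in the balanced stratum is not surface type but systems with closed leaves/finite orbits (non-minimal band complexes, to which the Rips trichotomy does not even apply). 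So the perturbed $\Psi'$ your argument needs --- surface type, with parameters still satisfying no relation beyond balancedness --- is not known to exist arbitrarily close to $\Psi$, and your route is circular at exactly the point where the work has to be done.

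The paper's proof avoids any genericity-of-type claim by an explicit construction: approximate $\Psi$ within $B$ by a system $\Psi'$ whose translation lengths $c_i'-a_i'$ are pairwise commensurable and whose parameters satisfy no further relations. Then $\omega=dx$ on the handlebody $H_{\Psi',\mathbf y}$ is proportional to the pullback of $d\varphi$ under a map $\theta:H_{\Psi',\mathbf y}\to S^1$, and the fibers $\theta^{-1}(x)$ are unions of leaves. Genericity forces each singular fiber to contain a single singularity, so $\chi(\theta^{-1}(x))$ jumps by at most one, while balancedness forces $\chi(\theta^{-1}(x))$ to average to zero over $S^1$; hence some fiber has a disc component. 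Its boundary realizes a restriction $r$ that is null-homologous in the handlebody and therefore already lies in $\mathscr R(T_{\Psi,\mathbf y})$ (as in Proposition~\ref{spi-rich}), and by stability of realized restrictions (Proposition~\ref{zorich-general}) every $\Psi''$ near $\Psi'$ in the stratum also realizes $r$, hence is non-minimal. This produces non-minimal transformations with the prescribed restriction space in every neighborhood of $\mathbf a$, which is what instability requires; your surface-type detour, even if its two-boundary-component step were made rigorous, cannot be completed without a substitute for this density argument.
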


\begin{proof}
For any open neighborhood $U$ of $\Psi$ one can find a
system of partial isometries $\Psi'=\{\psi_i':[a_i',b_i']\rightarrow[c_i',d_i']\;;\;i=1,\ldots,k\}$ in $U$
such that 
\begin{enumerate}
\item
the shifts $c_i'-a_i'$ are pairwise commensurable;
\item
no integral linear relation for the parameters of $\Psi'$ holds that is
not a consequence of this commensurability and the balancedness of $\Psi'$.
\end{enumerate}
Then the $1$-form $\omega=dx$ on the handlebody $H_{\Psi',\mathbf y}$ has pairwise
commensurable periods over all integral cycles. Therefore, it is proportional to the pullback of the
angular form $d\varphi$ on the circle $S^1$ under a map $\theta:H_{\Psi',\mathbf y}\rightarrow S^1$.
The connected components of the sets $\theta^{-1}(x)$, $x\in S^1$,
are the leaves of the foliation~$\mathcal F_{\Psi',\mathbf y}$ induced by $\omega$ on $H_{\Psi',\mathbf y}$.

The absence of additional relations on the parameters implies that
any subset $\theta^{-1}(x)$ contains only one singularity of $\mathcal F_{\Psi',\mathbf y}$.
Therefore, the Euler characteristics $\chi(\theta^{-1}(x))$, which is a piecewise constant function of $x$,
can jump only by one when $x$ varies continuously.
On the other hand,
the balancedness of $\Psi'$ implies that $\chi(\theta^{-1}(x))$ averages to zero
on $S^1$. Therefore, for some $x\in S^1$, we must have $\chi(\theta^{-1}(x))>0$, which
means that at least one of the connected components of $\theta^{-1}(x)$ is a
disc. The boundary $\rho$ of this disc is a realization of some restriction $r$ that holds for
$T_{\Psi,\mathbf y}$ since $\rho$ is homologous to zero in $H_{\Psi,\mathbf y}$. For
any $\Psi''$ close enough to $\Psi'$ the restriction $r$ will be realized also
by $T_{\Psi'',\mathbf y}$.
\end{proof}

Now we consider a particular family of systems of partial isometries with $k=3$ in which
the subset of systems giving rise to minimal interval exchange transformations is
precisely known, and explain the origin of Example~\ref{rauzy}.

Let $\Psi$ be an orientation preserving system of partial isometries of the form
$$\{\psi_i:[0,b_i]\rightarrow[c_i,1],\ i=1,2,3\},\quad b_1<b_2<b_3<c_2,\quad c_3<b_3,\quad 2b_3<b_2+c_1,\quad b_i+c_i=1,$$
and $\mathbf y\in(0,1)^6$ be an arbitrary vector with $y_1<y_2<\ldots<y_6$.
The domain $D$ and identifications in it to obtain $\widehat\Sigma_{\Psi,\mathbf y}$ are shown in Fig.~\ref{domain}.
\begin{figure}
\centerline{\includegraphics{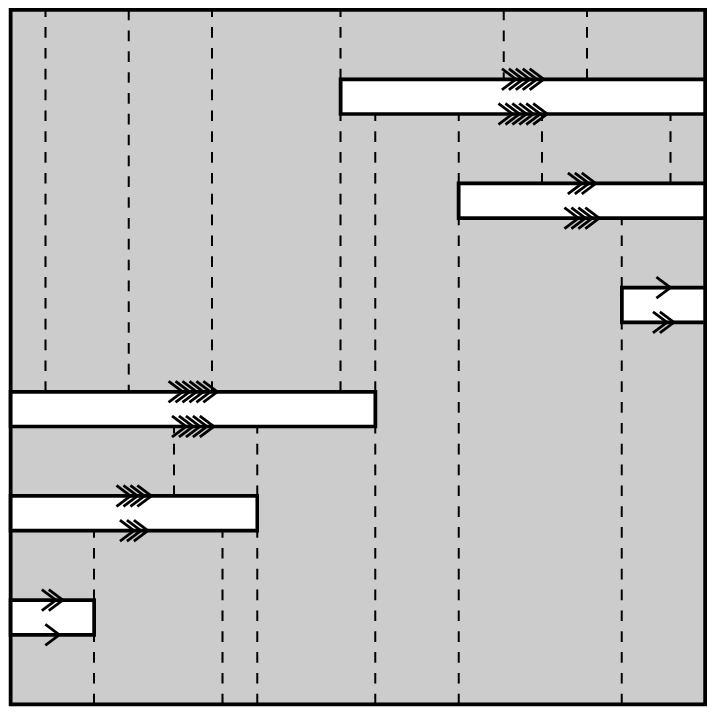}\put(-200,15){$1$}\put(-170,15){$2$}\put(-147,15){$3$}\put(-125,15){$4$}%
\put(-95,15){$5$}\put(-60,15){$6$}\put(-25,15){$7$}\put(-200,47){$7$}\put(-190,77){$6$}\put(-154,77){$1$}\put(-113,137){$3$}%
\put(-70,167){$7$}\put(-40,167){$2$}\put(-18,167){$3$}\put(-25,137){$1$}\put(-208,200){$3$}%
\put(-191,200){$5$}\put(-167,200){$7$}\put(-136,200){$2$}\put(-93,200){$6$}\put(-58,200){$1$}\put(-30,200){$4$}\hskip1cm
\includegraphics{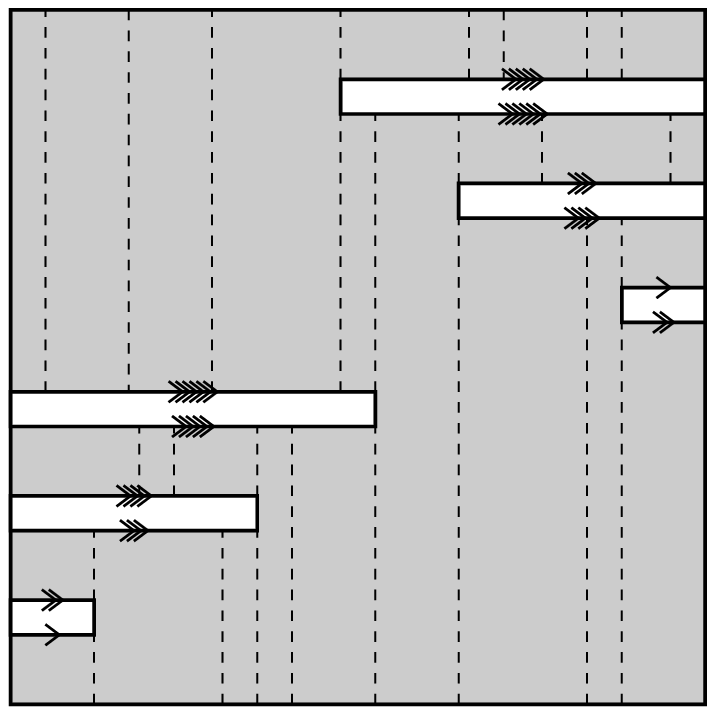}\put(-200,15){$1$}\put(-170,15){$2$}\put(-147,15){$3$}%
\put(-137,15){$4$}\put(-120,15){$5$}\put(-41,15){$4$}%
\put(-95,15){$6$}\put(-63,15){$7$}\put(-25,15){$5$}\put(-200,47){$5$}\put(-154,77){$1$}\put(-113,137){$3$}%
\put(-193,77){$7$}\put(-171,77){$4$}%
\put(-70,167){$5$}\put(-40,167){$2$}\put(-18,167){$3$}\put(-25,137){$1$}\put(-208,200){$3$}%
\put(-191,200){$6$}\put(-167,200){$5$}\put(-136,200){$2$}\put(-58,200){$1$}\put(-41,200){$4$}%
\put(-25,200){$5$}\put(-75,200){$4$}\put(-97,200){$7$}}
\caption{Double suspension surface. Each vertical straight line segment
in $\partial D$ is collapsed to a point, the bottom side of the square is identified with the top one}\label{domain}
\end{figure}

One can see from the left picture that the interval exchange transformation $T_{\Psi,\mathbf y}$ fills
the double suspension surface and is associated with
the following permutation:
$$\pi=\begin{pmatrix}1&2&3&4&5&6&7\\3&5&7&2&6&1&4\end{pmatrix}$$
and the following vector of parameters:`
$$\mathbf a=(b_1,b_2+c_1-2b_3,b_3-c_3,b_3-b_2,c_2-b_3,b_2-b_1,b_1).$$
The dashed lines in the left picture of
Fig.~\ref{domain} show the segments of separatrices between the singularity and the first intersection with $[0,1]\times\{0\}$.
The corresponding foliation on the double suspension surface has two singularities each of which is a double saddle.

If the excess $e(\Psi)=b_1+b_2+b_3-1$ is not equal to zero, then $T_{\Psi,\mathbf y}$ is not minimal. In the case
$e(\Psi)=0$ the double suspension surface can be identified with the quotient of the regular skew polyhedron $\{4,6\;|\;4\}$ by~$\mathbb Z^3$,
embedded in the $3$-torus with the foliation induced by the $1$-form $(b_2+b_3)\,dx_1+(b_3+b_1)\,dx_2+(b_1+b_2)\,dx_3$,
see~\cite{dd08,dynskr}. The minimality of $T_{\Psi,\mathbf y}$ is equivalent to $\Psi$ being of thin type.
This occurs if and only if the point $(b_1:b_2:b_3)\in\mathbb RP^2$ belongs to the Rauzy gasket, which is defined as follows.

Denote by $\Delta$ the following subset of the projective plane $\mathbb RP^2$: $\Delta=\{(x_1:x_2:x_3)\;;\;x_i\geqslant0,\ i=1,2,3\}$,
and by $P_1$, $P_2$, $P_3$, the projective transformations with matrices
$$\begin{pmatrix}1&1&1\\0&1&0\\0&0&1\end{pmatrix},\quad
\begin{pmatrix}1&0&0\\1&1&1\\0&0&1\end{pmatrix},\quad
\begin{pmatrix}1&0&0\\0&1&0\\1&1&1\end{pmatrix},$$
respectively.

Points of the Rauzy gasket are in one-to-one correspondence with infinite sequences $(i_1,i_2,\ldots)\in\{1,2,3\}^{\mathbb N}$
in which every element of $\{1,2,3\}$ appears infinitely often. For any such sequence the intersection
$$\bigcap_{m=1}^\infty(P_{i_1}\circ\ldots\circ P_{i_m})(\Delta)$$
consists of a single element, which is the point of the Rauzy gasket corresponding to the sequence.

The interval exchange transformations from Example~\ref{rauzy}
was produced by using the construction above with the additional assumption $b_1+b_2<b_3$.
One can see from the right picture in Fig~\ref{domain} that
the first return map for the shorter transversal $[0,b_2+c_3]$
will be the interval exchange transformation associated with the permutation
$$\begin{pmatrix}1&2&3&4&5&6&7\\3&6&5&2&7&4&1\end{pmatrix}$$
and the following vector of parameters:
$$\frac1{b_2+c_3}(b_1,b_2+c_1-2b_3,b_3-c_3,b_3-b_1-b_2,b_1,c_2-b_3,2b_2-b_3).$$

The parameters $a_i$, $i=1,2,3$, and $e$ from Example~\ref{rauzy} are related to these as follows:
$$a_1=\frac{b_1}{b_2+c_3},\quad a_2=\frac{b_2+c_1-2b_3}{b_2+c_3},\quad a_3=\frac{b_3-c_3}{b_2+c_3},
\quad e=\frac{e(\Psi)}{b_2+c_3}=\frac{b_1+b_2+b_3-1}{b_2+c_3}.$$
The triangle defined by
$$a_i\geqslant0,\quad 3a_1+2a_2+2a_3=1$$
coincides with $P_3(P_2(\Delta))$, so it
intersects the mentioned above Rauzy gasket in a subset that is
also projective equivalent to the Rauzy gasket.

\subsection{Interval translation mappings}
Interval translation mappings were introduced by M.\,Boshernitzan and I.\,Kornfeld in \cite{bk95}.

\begin{defi}
\emph{An interval translation mapping} is a map $T:[0,1)\rightarrow[0,1)$ of the form
$$T(x)=x+t_i\qquad\text{if}\qquad\sum_{j=1}^{i-1}\lambda_j\leqslant x<\sum_{j=1}^i\lambda_j,$$
where $\boldsymbol\lambda=(\lambda_1,\ldots,\lambda_k)$ is a probability vector and $\mathbf t=(t_1,\ldots,t_k)$
is a real vector whose coordinates satisfy the inequalities
$$-\sum_{j=1}^{i-1}\lambda_j\leqslant t_i\leqslant1-\sum_{j=1}^i\lambda_j.$$
The map defined by these parameters will be denoted $T_{\boldsymbol\lambda,\mathbf t}$.

An interval translation mapping $T$ is said to be \emph{of finite type}
if for some $m\in\mathbb N$ the image of $T^{m+1}$ coincides with that of $T^m$,
\newtext{and otherwise \emph{of infinite type}}.
\end{defi}

M.\,Boshernitzan and I.\,Kornfeld ask in \cite{bk95} whether almost all interval translation mappings are of finite type. 
This question is answered in the positive in the particular cases of so called double rotations, see \cite{sia05}, \cite{bc12}, and \cite{bt03}, and for interval translation mappings on three intervals (i.e. in the case $k=3$), see \cite{vo14}.

In general, this problem is an instance of our Conjecture~\ref{measurezero}. Indeed, an interval translation mapping
can be viewed as an orientation preserving system of partial isometries $\Psi=\{\psi_i:[a_i,b_i]\rightarrow[c_i,d_i]\;;\;i=1,\ldots,k\}$ such
that $a_1=0$, $a_i=b_{i-1}$ for $i=2,\dots,k$, $b_k=1$. The only difference is that $\psi_i$'s are defined on the whole
interval $[a_i,b_i]$, including the right endpoint, which is inessential. \remove{`Finite type'}
\newtext{`Infinite type'} for an interval translation mappings means exactly
`thin type' for the corresponding system of partial isometries.

One can show that for this kind of systems of partial isometries the converse statement to Proposition~\ref{balanced-thin}
is also true: the foliation $\widehat{\mathcal F}_{\Psi,\mathbf y}$ is minimal if \emph{and only if} $\Psi$ is of thin type.
So, the problem of studying finite type interval translation mappings translates exactly into
a problem of studying interval exchange maps satisfying certain rich set of integral linear restrictions.

\newtext{Note, however, that the minimality of $\widehat{\mathcal F}_{\Psi,\mathbf y}$ is broken here
in a `non-standard' way. Typically, for a generic system of isometries $\Psi$
the reason for $\widehat{\mathcal F}_{\Psi,\mathbf y}$ to be non-minimal
is the presence of a closed regular leaf, which does not occur for $\Psi$ corresponding to a generic
interval translation mapping. If $\Psi$ comes from a finite type interval translation mapping $T$,
then $\widehat{\mathcal F}_{\Psi,\mathbf y}$ decomposes into two halves having the
same qualitative behavior. In particular, if $T$ is a minimal interval exchange transformation,
then $\widehat{\mathcal F}_{\Psi,\mathbf y}$ will have two minimal components each having
$T$ as the first return map for some transversal.}

\end{document}